\documentclass[11pt]{amsart}
\usepackage[margin=1.15in]{geometry}
\usepackage{amsmath,amssymb,amsthm,amsfonts}
\usepackage{mathrsfs}
\usepackage{enumitem}
\usepackage[expansion=false]{microtype}
\usepackage{xcolor}
\usepackage[colorlinks=true,linkcolor=blue!60!black,citecolor=blue!60!black,urlcolor=blue!60!black]{hyperref}

\setlist[enumerate]{leftmargin=*,itemsep=2pt,topsep=3pt}

\theoremstyle{plain}
\newtheorem{theorem}{Theorem}[section]
\newtheorem{lemma}[theorem]{Lemma}
\newtheorem{proposition}[theorem]{Proposition}
\newtheorem{corollary}[theorem]{Corollary}

\theoremstyle{definition}

\newtheorem{remark}[theorem]{Remark}

\numberwithin{equation}{section}

\newcommand{\C}{\mathbb{C}}
\newcommand{\N}{\mathbb{N}}
\newcommand{\Z}{\mathbb{Z}}
\newcommand{\Q}{\mathbb{Q}}
\newcommand{\PP}{\mathbb{P}}
\newcommand{\K}{\mathbb{K}}
\newcommand{\sD}{\mathcal{D}}
\newcommand{\sO}{\mathcal{O}}
\newcommand{\sM}{\mathcal{M}}
\newcommand{\sN}{\mathcal{N}}
\newcommand{\sV}{\mathcal{V}}
\newcommand{\sL}{\mathcal{L}}
\newcommand{\sI}{\mathcal{I}}
\newcommand{\Ann}{\operatorname{Ann}}
\newcommand{\rank}{\operatorname{rank}}
\newcommand{\Chrel}{\operatorname{Ch}^{\mathrm{rel}}}
\newcommand{\CCrel}{\operatorname{CC}^{\mathrm{rel}}}
\newcommand{\CC}{\operatorname{CC}}
\newcommand{\Ch}{\operatorname{Ch}}
\newcommand{\gr}{\operatorname{gr}}
\newcommand{\Hom}{\operatorname{Hom}}

\newcommand{\coker}{\operatorname{coker}}

\newcommand{\Spec}{\operatorname{Spec}}
\newcommand{\supp}{\operatorname{supp}}
\newcommand{\Der}{\operatorname{Der}}
\newcommand{\Frac}{\operatorname{Frac}}
\newcommand{\Hilb}{\operatorname{Hilb}}
\newcommand{\Span}{\operatorname{span}}
\newcommand{\codim}{\operatorname{codim}}
\newcommand{\im}{\operatorname{im}}
\newcommand{\ev}{\operatorname{ev}}

\newcommand{\Exp}{\operatorname{Exp}}
\newcommand{\one}{\mathbf{1}}
\newcommand{\mon}[1]{\langle #1\rangle}
\newcommand{\ol}[1]{\overline{#1}}
\newcommand{\wt}[1]{\widetilde{#1}}

\begin{document}

\title[Bernstein-Sato ideals for free hyperplane arrangements]{Bernstein-Sato ideals for free hyperplane arrangements}
\author{Wenzong Guo, Lei Wu, Fanghan Xiang}

\address{Wenzong Guo, School of Mathematical Sciences, Zhejiang
University, Hangzhou, China}
\email{wenzongguo@zju.edu.cn}
\address{Lei Wu, School of Mathematical Sciences, Zhejiang
University, Hangzhou, China}
\email{leiwu23@zju.edu.cn}
\address{Fanghan Xiang, School of Mathematical Sciences, Zhejiang
University, Hangzhou, China}
\email{fanghanxiang@zju.edu.cn}

\subjclass[2020]{14F10, 32S22, 32S40, 52C35}

\begin{abstract}
Let $f=(f_1,\dots,f_r)$ be a complete factorization of a central hyperplane arrangement $D$ in $X=\C^n$. For a monoid ideal $K\subseteq \N^r$ we study the Bernstein-Sato ideal $B^K_f$ of $f$ along $K$, that is, the $\C[s]$-annihilator of $\sD_X[s]f^s/\sum_{m\in K}\sD_X[s]f^{s+m}$. When $D$ is free we compute two families of these ideals with the help of AI. For the unit shift $K=\mon{e_i}$ we prove that $B^{-e_i}_f$ is generated by an explicit product of linear forms indexed by the dense edges of $D$ contained in $D_i$. This determines all the Bernstein-Sato ideals $B^{a,b}_f=\Ann_{\C[s]}\sD_X[s]f^{s-a}/\sD_X[s]f^{s-b}$, $a\geq b$, of a free arrangement, generalizing formulas of Maisonobe \cite{Mai16b} and Bath \cite{Bat20b}. The main new ingredient identifies the multiplicities of the relative characteristic cycle of $\sD_X[s]f^s/\sD_X[s]f^{s+e_i}$ along the conormal bundle of the origin with the coefficients of the Hilbert series of an Artinian complete intersection attached to a generic Ziegler restriction of $D$; the total multiplicity computed in \cite{Wu20} then forces all the resulting coefficientwise upper bounds to be equalities. For the coordinate monoid ideal $K=\mon{e_1,\dots,e_r}$ we show that $B^K_f$ is generated by one Euler relation for each irreducible factor of the essential quotient of $D$. Finally, we show that the zero locus of a Bernstein-Sato ideal along a monoid ideal need not be a finite union of translated linear subvarieties, even for a reduced free arrangement in $\C^2$: for $f=(x,y,x+y,x+2y)$ and $K=\mon{3e_1,3e_2}$ we compute $B^K_f$ exactly and find an irreducible quadric component. This disproves a conjecture due to Budur.
\end{abstract}

\maketitle

\setcounter{tocdepth}{2}
\tableofcontents

\section{Introduction}

\subsection{Bernstein-Sato ideals along monoid ideals}\label{sec:intro-monoid}
Let $X$ be a smooth complex algebraic variety (or a complex manifold) and let $f=(f_1,\dots,f_r)$ be an $r$-tuple of regular (resp. holomorphic) functions on $X$. We write $s=(s_1,\dots,s_r)$, $\C[s]=\C[s_1,\dots,s_r]$, $\sD_X[s]=\sD_X\otimes_\C\C[s]$ and $f^s=\prod_{i=1}^r f_i^{s_i}$. Let $U=X\setminus (\prod_i f_i=0)$ and $j\colon U\hookrightarrow X$. For $a\in\Z^r$ we denote by $\sD_X[s]f^{s+a}$ the $\sD_X[s]$-submodule of $j_*(\sO_U[s]f^s)$ generated by $f^{s+a}=\prod_i f_i^{s_i+a_i}$. Following \cite[\S 3.1]{Wu20}, for $a\geq b$ in $\Z^r$ (coordinatewise) we set
\[
M^{a,b}_f:=\frac{\sD_X[s]f^{s-a}}{\sD_X[s]f^{s-b}},\qquad B^{a,b}_f:=\Ann_{\C[s]}M^{a,b}_f ,
\]
and we abbreviate $M^{0,b}_f$, $B^{0,b}_f$ to $M^{b}_f$, $B^{b}_f$. The ideal $B_f:=B^{-\one_r}_f$, where $\one_r=(1,\dots,1)$, is the classical Bernstein-Sato ideal of $f$: it consists of the polynomials $b(s)$ satisfying
\[
b(s)f^s=P\cdot f^{s+\one_r}\qquad\text{for some } P\in \sD_X[s].
\]
When $r=1$ its monic generator is the $b$-function of $f$.

A \emph{monoid ideal} of $\N^r$ is a subset $K\subseteq \N^r$ with $K+\N^r\subseteq K$. By Dickson's lemma every monoid ideal is finitely generated, that is,
\[
K=\mon{m_1,\dots,m_p}:=\sum_{k=1}^p (m_k+\N^r)
\]
for finitely many $m_1,\dots,m_p\in\N^r$. We define the \emph{Bernstein-Sato ideal of $f$ along $K$} to be
\begin{equation}\label{eq:BK}
B^K_f:=\Ann_{\C[s]}N^K_f,\qquad\text{where}\qquad N^K_f:=\frac{\sD_X[s]f^s}{\sum_{m\in K}\sD_X[s]f^{s+m}} .
\end{equation}
Since $f^{s+m+u}=f^{u}f^{s+m}$ for $u\in\N^r$, only the generators of $K$ matter:
\begin{equation}\label{eq:generators-suffice}
\sum_{m\in K}\sD_X[s]f^{s+m}=\sum_{k=1}^p\sD_X[s]f^{s+m_k}.
\end{equation}
For instance, $N^{\mon{\one_r}}_f=M^{-\one_r}_f$ and $B^{\mon{\one_r}}_f=B_f$; for $i\in\{1,\dots,r\}$ and the $i$-th unit vector $e_i$,
\[
N^{\mon{e_i}}_f=M^{-e_i}_f=\frac{\sD_X[s]f^s}{\sD_X[s]f^{s+e_i}},\qquad B^{\mon{e_i}}_f=B^{-e_i}_f ,
\]
which we call the \emph{unit-shift} module and ideal; and for the \emph{coordinate monoid ideal} $K_{[r]}:=\mon{e_1,\dots,e_r}$ the module $N^{K_{[r]}}_f$ is the module $N_0$ of \cite[\S 5.3]{Wu20}. More generally, for a nonempty subset $I\subseteq\{1,\dots,r\}$ we write $K_I:=\mon{e_i\mid i\in I}$.

The ideals $B^K_f$ were introduced in \cite{Wu26} in the more general context of a regular holonomic coefficient module $\sM$ on $X$; the ideal $B^K_f$ is the ideal $B^{K}(\sN_0)$ of \emph{loc. cit.} for the coefficient module $\sM=\sO_X$, and its zero locus governs the generalized nearby cycles of $\sO_X$ along the monoid ideal $K$. Based on computational examples, Budur \cite{Bud15} conjectured that Bernstein-Sato ideals of this type are generated by products of linear polynomials $c_1s_1+\dots+c_rs_r+c_0$ with $c_i\in\N$ and $c_0>0$ (see also \cite[Remark.6.8.(1)]{BSZ25} and \cite[\S 1.3]{Wu26}); for the classical ideal $B_f$, the theorems of Sabbah \cite{Sab87}, Gyoja \cite{Gyo93} and Maisonobe \cite{Mai16a} together with \cite{BVWZ19} show that the codimension-one components of $Z(B_f)$ are hyperplanes of this form. 


In this paper we compute $B^K_f$ exactly for several monoid ideals $K$ when $f$ is a complete factorization of a free hyperplane arrangement with the help of AI. The answers for unit shifts and for coordinate monoid ideals are generated by linear forms, in accordance with the above expectations; the answer for $f=(x,y,x+y,x+2y)$ and $K=\mon{3e_1,3e_2}$ is not, and it shows that Budur's conjecture fails already for a reduced free line arrangement. Since $K=\mon{3e_1,3e_2}$ is not principal, it is still not known if the original conjecture \cite[Conjecture 1.1] {Bud15} holds. We suspect that AI would help in finding further counterexamples even for principal $K$. 

\subsection{Hyperplane arrangements}\label{sec:intro-arr}
A hyperplane arrangement $D=\{D_1,\dots,D_r\}$ is a finite collection of hyperplanes in $X=\C^n$; throughout this paper all arrangements are \emph{central} (every $D_j$ is a linear subspace) and \emph{reduced} (the $D_j$ are pairwise distinct). We choose linear forms $f_j$ with $D_j=(f_j=0)$ and put $f_D=\prod_j f_j$. The $r$-tuple $f=(f_1,\dots,f_r)$ is called a \emph{complete factorization} of $D$ (or of $f_D$); it is unique up to nonzero constants and up to reordering, and none of the statements below depends on these choices. The \emph{intersection lattice} $L(D)$ is the set of all intersections of members of $D$; its elements are called \emph{edges}. For an edge $W$ we write
\[
J(W,f)=\{j\in\{1,\dots,r\}\mid W\subseteq D_j\},\qquad D_W=\{D_j\mid j\in J(W,f)\},
\]
\[
D^W=\{D_j/W\mid j\in J(W,f)\},
\]
and $\rank(W)=\codim_X W$. Thus $D^W$ is a central essential arrangement in $X/W$, and $D_W$ is the pullback of $D^W$ under the projection $X\to X/W$. We write $f_W=(f_j)_{j\in J(W,f)}$ and regard it both as a complete factorization of $D_W$ on $X$ and of $D^W$ on $X/W$. The arrangement $D$ is \emph{essential} if $\{0\}\in L(D)$, and \emph{irreducible} if there is no linear change of coordinates on $\C^n$ such that $f_D$ becomes a product of two nonconstant polynomials in disjoint sets of variables. An edge $W$ is \emph{dense} if $D^W$ is irreducible. Finally, $D$ is \emph{free} if the underlying divisor is free in the sense of K. Saito \cite{Sai80}, that is, if the module $\Der(-\log D)$ of logarithmic vector fields is free over $\C[x_1,\dots,x_n]$; the degrees $d_1\leq\dots\leq d_n$ of a homogeneous basis are the \emph{exponents} of $D$ (see \S\ref{sec:free}).

By Maisonobe \cite{Mai16b} (see \cite[Theorem 5.3]{Wu20}), for a free arrangement $D$ with complete factorization $f$ the Bernstein-Sato ideal $B_f$ is principal and generated by
\begin{equation}\label{eq:Maisonobe}
\prod_{W\in L(D)\ \mathrm{dense}}\ \prod_{j=0}^{2(|J(W,f)|-\rank(W))}\Big(\sum_{i\in J(W,f)}s_i+\rank(W)+j\Big).
\end{equation}
Bath \cite{Bat23} proved that the Bernstein-Sato ideals $B^{-a}_f$ of tame (in particular of free) arrangements are principal and radical for every $a\in\N^r\setminus\{0\}$, and described their zero loci in several cases \cite{Bat20b,Bat23}. Our first result gives the generator of the unit-shift ideal $B^{-e_i}_f$ of a free arrangement exactly; together with Bath's theorems it determines all the ideals $B^{a,b}_f$ of a free arrangement.

\subsection{Unit shifts Bernstein-Sato ideals for free arrangements}\label{sec:intro-unit}
For a central arrangement $D$ with complete factorization $f$ and for $i\in\{1,\dots,r\}$ we define the squarefree polynomial
\begin{equation}\label{eq:PDi}
P_{D,i}(s):=\prod_{\substack{W\in L(D)\ \mathrm{dense}\\ i\in J(W,f)}}\ \prod_{\nu=0}^{|J(W,f)|-2\rank(W)+1}\Big(\sum_{j\in J(W,f)}s_j+\rank(W)+\nu\Big).
\end{equation}
For a dense edge $W$ of a free arrangement the upper limit $|J(W,f)|-2\rank(W)+1$ is nonnegative; see \eqref{eq:rho}.

\begin{theorem}\label{thm:main-unit}
Let $D$ be a free central hyperplane arrangement in $X=\C^n$ with complete factorization $f=(f_1,\dots,f_r)$. Then for every $i\in\{1,\dots,r\}$,
\[
B^{-e_i}_f=\big(P_{D,i}(s)\big).
\]
\end{theorem}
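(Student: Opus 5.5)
Our plan is to combine Bath's structural results with a localization argument over dense edges and a multiplicity count at the origin. By Bath \cite{Bat23}, $B^{-e_i}_f$ is principal and radical, so it suffices to determine the zero locus $Z(B^{-e_i}_f)$ as a reduced union of hyperplanes. By \cite{Wu20} (Maisonobe's method), the zero locus is the image under projection of the support of the relative characteristic cycle $\CCrel(M^{-e_i}_f)$. For a free arrangement the relative characteristic variety of $\sD_X[s]f^s$ is the union over edges $W$ of $\overline{T^*_{W}X}\times$ (linear subspaces in $s$), and only dense edges contribute. Thus every component of $Z(B^{-e_i}_f)$ has the form $\sum_{j\in J(W,f)}s_j+c$ for a dense $W$, and we only need those with $i\in J(W,f)$, since $f_i$ is a unit near generic points of $W$ when $i\notin J(W,f)$.

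We then proceed by induction on $\rank(W)$, localizing at a generic point of each dense edge $W$. Near such a point $D$ is locally the product of $D^W$ with a smooth factor, and $D^W$ is again free because localizations of free arrangements are free. The contribution of $W$ is therefore the contribution of the origin for the essential irreducible free arrangement $D^W$. This reduces the theorem to the following claim: for an essential, irreducible, free $D$ with $i\in\{1,\dots,r\}$, the roots $c$ of the factors $\sum_j s_j+c$ attached to the origin are exactly $c=n,n+1,\dots,r-n+1$. That is, there are exactly $r-2n+2$ of them, all of multiplicity one in the radical.

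To prove this claim, we compute the multiplicities of $\CCrel(M^{-e_i}_f)$ along $T^*_0X\times\{\sum s_j=-c\}$. The approach is to use the short exact sequence $0\to M^{-e_i}_f\to M^{-\one_r}_f\to \cdots$, or more directly the filtration $f^{s}\supset f^{s+e_i}$. Freeness gives explicit generators of $\Ann f^s$ from logarithmic derivations, so $\gr$ of $\sD_X[s]f^s/\sD_X[s]f^{s+e_i}$ is cut out by the symbols of a basis $\delta_1,\dots,\delta_n$ of $\Der(-\log D)$ twisted by $s$, together with $f_i$. Restricting to the hyperplane $f_i=0$ should yield the Ziegler restriction $(D^{H_i},m)$, which is free with exponents $d_2,\dots,d_n$. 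Taking a generic section, the fiber over $T^*_0X$ becomes an Artinian complete intersection with Hilbert series $\prod_{k}(1+t+\dots+t^{d_k-1})$-type coefficients. We identify the multiplicity along each shifted hyperplane $c=n+\nu$ with the $\nu$-th Hilbert coefficient, which gives an upper bound. Comparing with the total multiplicity computed in \cite{Wu20} then forces equality, so every $\nu\in[0,r-2n+1]$ occurs with positive multiplicity and nothing else occurs.

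The main obstacle is this identification of the graded multiplicities. One must show that the $s$-grading, through the Euler vector field acting as $\sum s_j+n+\nu$ on degree-$\nu$ pieces, matches the Hilbert-series grading. One must also show that the coefficientwise inequality genuinely holds; I would try a semicontinuity/deformation argument on the generic fiber. Two smaller points remain: the positivity of the Hilbert coefficients in degrees $0,\dots,r-2n+1$, which follows from $\sum_{k\ge2}(d_k-1)=r-1-d_1-(n-1)$ with $d_1=1$, and the nonnegativity of the range, which follows from \eqref{eq:rho}.
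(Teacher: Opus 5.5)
Your outline has the same architecture as the paper: Bath's theorem, reduction to the origin of the essential irreducible free arrangements $D^W$, a coefficientwise upper bound by the Hilbert function of an Artinian complete intersection from the Ziegler restriction, and a squeeze against Wu's total multiplicity via Terao's factorization. However, two load-bearing steps are missing.

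\textbf{The reduction (the upper bound $b_{D,i}\mid P_{D,i}$).} Your premise is false. For the normal crossing $f=(x,y)$ in $\C^2$ the edge $\{0\}$ is not dense, yet $T^*_{\{0\}}X\subseteq\Ch(j_*\sO_U)$. Moreover $\Chrel(M^{-e_1}_f)$ contains the component $T^*_{\{0\}}X\times(s_1+1=0)$, whose $s$-part is not of the form $s_1+s_2+c$. Localization only gives $b_{D,i}=\operatorname{lcm}_W b_{D^W,i}$ over \emph{all} edges $W$ with $i\in J(W,f)$. The statement at the origin of an irreducible $D^W$ only decides which forms $\one_{J(W,f)}\cdot s+\alpha$ divide $b_{D^W,i}$; it says nothing about the remaining factors of $b_{D^W,i}$. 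Likewise, the vanishing of the stalks on $W^\circ$ when $i\notin J(W,f)$ does not prevent $L_{W,\nu}$ from dividing $b_{D^{W'},i}$ for a smaller edge $W'\subsetneq W$. The missing idea is translation invariance. If an irreducible factor $\ell$ satisfies $\ell(s-e_h)\sim\ell(s)$, then $j_*(\sO_U[s]f^s)_{(\ell)}$ is coherent over the noetherian ring $\sD_X[s]_{(\ell)}$. So the chain $\sD_X[s]_{(\ell)}f^{s-ae_h}$ stabilizes, and $f_h$ acts invertibly on $(M^{-e_i}_f)_{(\ell)}$. This lets the paper localize at a point $y\in W^\circ$ of an edge $W$ whose index set $J(W,f)$ is exactly the set of $h$ with $\ell(s-e_h)\not\sim\ell(s)$. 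A tensor splitting of $M^{-e_i}_{f_W}$ along the irreducible factors of $D^W$ then shows $W$ is dense. Finally, the Euler operator on the point-supported module $(M^{-e_i}_{f_W})_{(\ell)}\simeq\delta_{0,\K}^{\oplus m}$ forces $\ell\sim L_{W,q}$ with $q\in\N$, after which the local theorem bounds $q$.

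\textbf{The local multiplicity count.} You leave this to an unspecified semicontinuity argument, and the mechanism you sketch cannot produce it. For the filtration defining $\CCrel$ ($s$ in degree $0$), the symbol of $\wt\delta_k=\delta_k-\sum_js_j\delta_k(f_j)/f_j$ is just $\sigma(\delta_k)$. So the ideal generated by the symbols of your generators does not depend on $s$ and cannot see the hyperplanes $(\lambda_\nu=0)$. The paper proceeds as follows.
\begin{itemize}
\item It localizes at $(\lambda_\nu)$, where the module is point supported over $\K_\nu$.
\item Kashiwara's equivalence turns $m_{\Gamma_\nu}$ into the dimension of the space of $v\in\delta_{0,\K_\nu}$ with $f_iv=\wt Ev=\wt\theta_av=0$.
\item The Euler condition places $v$ in the degree-$\nu$ part $V_\nu$ of $\ker(f_i)$.
\item For a Saito basis adapted to $D_i$ (so $\theta_a(f_i)=0$), the $\wt\theta_a$ give a matrix $T_\nu(s)\colon V_\nu\to\bigoplus_aV_{\nu-d_a+1}$, affine in $s$.
\item The deformation is along the line $s_0+\tau\hat\omega$ inside $(\lambda_\nu=0)$, with $\one_r\cdot\hat\omega=0$. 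Its leading coefficient in $\tau$ is the transpose of $(q_a)\mapsto\sum_ag_a(\omega)q_a$, where $g_a(\omega)=\sum_{j\neq i}\omega_j\ol\varrho_{aj}$.
\end{itemize}
So the complete intersection is generated in degrees $d_a-1$ by generic combinations of the logarithmic quotients, with Hilbert series $\prod_{a\geq 2}(1+t+\dots+t^{d_a-2})$. The series $\prod_k(1+t+\dots+t^{d_k-1})$ you wrote has total $\prod_kd_k$, which for $n\geq 2$ strictly exceeds $\prod_{a\geq 2}(d_a-1)$, so the squeeze would fail. Its top degree is also $r-n$ rather than $\rho=r-2n+1$, and your identity for $\sum_{k\geq 2}(d_k-1)$ is off by one. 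Finally, since one weight $\omega$ serves all $a$, the regularity of $g_2(\omega),\dots,g_n(\omega)$ needs a real argument. The paper gets it from Ziegler's theorem applied pointwise (the values $\ol\theta_a(x)$ span the carrier flat of $x$) together with a dimension count over the strata of $\PP(H)$.
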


Theorem \ref{thm:main-unit} governs all the ideals $B^{a,b}_f$. For $c\in\N^r$ and an edge $W$ we write $c_W:=\sum_{j\in J(W,f)}c_j$, and we define
\begin{equation}\label{eq:PDc}
P_{D,c}(s):=\prod_{\substack{W\in L(D)\ \mathrm{dense}\\ c_W>0}}\ \prod_{k=0}^{|J(W,f)|-2\rank(W)+c_W}\Big(\sum_{j\in J(W,f)}s_j+\rank(W)+k\Big),
\end{equation}
so that $P_{D,e_i}=P_{D,i}$. By the theorems of Bath \cite[Theorems 1.1 and 1.4]{Bat23} recalled in Theorem \ref{thm:Bath-principal-radical}, the ideals $B^{a,b}_f$ of a free arrangement are principal and radical, and by \cite[Proposition 3.3]{Wu20} their zero loci are additive along chains $a\geq b\geq c$. Combining this with Theorem \ref{thm:main-unit} we obtain:

\begin{corollary}\label{cor:general-shift}
Let $D$ be a free central hyperplane arrangement in $X=\C^n$ with complete factorization $f$, let $a\geq b$ in $\Z^r$ with $a\neq b$, and put $c=a-b\in\N^r\setminus\{0\}$. Then $B^{a,b}_f$ is principal and radical, and
\[
B^{a,b}_f=\big(P_{D,c}(s-a)\big).
\]
In particular $B^{-c}_f=\Ann_{\C[s]}\big(\sD_X[s]f^s/\sD_X[s]f^{s+c}\big)=(P_{D,c}(s))$ for every $c\in\N^r\setminus\{0\}$, and $Z(B^{a,b}_f)$ is determined by the intersection lattice of $D$ and by $a$, $b$.
\end{corollary}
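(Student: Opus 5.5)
We deduce Corollary~\ref{cor:general-shift} from Theorem~\ref{thm:main-unit} using three tools: the principality and radicality theorem of Bath (Theorem~\ref{thm:Bath-principal-radical}), the additivity of zero loci along chains \cite[Proposition 3.3]{Wu20}, and a translation argument.

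\emph{Reduction to $a=0$.} Substituting $s\mapsto s-a$ gives a $\C$-algebra automorphism of $\C[s]$ and of $\sD_X[s]$, and it sends $f^{s}$ to $f^{s-a}$. It therefore identifies $M^{0,b-a}_f$ with $M^{a,b}_f$, up to twisting the $\C[s]$-action by the shift. Hence $B^{a,b}_f$ is obtained from $B^{-c}_f$ by replacing $s$ with $s-a$, and it suffices to prove $B^{-c}_f=(P_{D,c}(s))$ for $c\in\N^r\setminus\{0\}$. By Theorem~\ref{thm:Bath-principal-radical}, $B^{-c}_f$ is principal and radical. Since $P_{D,c}$ is squarefree, it is enough to show the set-theoretic equality $Z(B^{-c}_f)=Z(P_{D,c})$.

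\emph{Decomposing $c$.} Write $c=e_{i_1}+\dots+e_{i_m}$ and form the chain
\[
0\geq -e_{i_1}\geq -e_{i_1}-e_{i_2}\geq\dots\geq -c .
\]
Additivity of zero loci \cite[Proposition 3.3]{Wu20} gives
\[
Z(B^{0,-c}_f)=\bigcup_{k} Z\big(B^{a_k,a_{k+1}}_f\big),
\]
where $a_k=-(e_{i_1}+\dots+e_{i_k})$ and $a_{k+1}=a_k-e_{i_{k+1}}$. By the translation step, each term satisfies $B^{a_k,a_{k+1}}_f=(P_{D,i_{k+1}}(s-a_k))$, using Theorem~\ref{thm:main-unit}. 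Note that $s-a_k=s+e_{i_1}+\dots+e_{i_k}$. It remains to check a combinatorial identity: the union of the zero loci of $P_{D,i_{k+1}}(s+e_{i_1}+\dots+e_{i_k})$ over $k$ equals $Z(P_{D,c})$.

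\emph{The combinatorial identity.} Fix a dense edge $W$ and write $\sigma_W=\sum_{j\in J(W,f)}s_j$. The factor indexed by $W$ in $P_{D,i_{k+1}}(s-a_k)$ appears only when $i_{k+1}\in J(W,f)$. In that case it vanishes on the hyperplanes
\[
\sigma_W+(a_k\text{ contribution})+\rank(W)+\nu=0,\qquad \nu=0,\dots,|J(W,f)|-2\rank(W)+1 .
\]
The shift $-a_k$ adds $t_k:=\#\{l\leq k: i_l\in J(W,f)\}$ to $\sigma_W$. As $k$ runs over the indices with $i_{k+1}\in J(W,f)$, $t_k$ takes the values $0,1,\dots,c_W-1$. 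Each such $k$ contributes the consecutive range of constants from $\rank(W)+t_k$ to $\rank(W)+t_k+|J(W,f)|-2\rank(W)+1$. These ranges overlap, since the upper limit is nonnegative by \eqref{eq:rho}, so their union is the full range from $\rank(W)$ to $\rank(W)+|J(W,f)|-2\rank(W)+c_W$. Hyperplanes coming from distinct dense edges are distinct, because the sets $J(W,f)$ differ. Hence the union is exactly $Z(P_{D,c})$.

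The main care is in making sure the direction of the shift in \cite[Proposition 3.3]{Wu20} and in the translation isomorphism matches: the module $\sD_X[s]f^{s-a_k}/\sD_X[s]f^{s-a_{k+1}}$ must be correctly identified with the twist of $M^{-e_{i_{k+1}}}_f$. Once that bookkeeping is right, the last statement of the corollary is clear, since $P_{D,c}$ depends only on $L(D)$ and $c$.
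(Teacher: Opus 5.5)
Your proposal is correct and follows the paper's proof essentially step by step. Both arguments translate to $a=0$, take a unit-step chain from $0$ to $-c$, use additivity of zero loci, apply Theorem~\ref{thm:main-unit} to each step, and count the values $0,\dots,c_W-1$ taken by the shift for each dense edge $W$.

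The one point the paper makes explicit and you leave implicit is this: \cite[Proposition 3.3]{Wu20} only gives additivity of the pure $(r-1)$-dimensional parts $Z_{r-1}$. To pass to full zero loci you need that every intermediate ideal $B^{a_k,a_{k'}}_f$ is principal (by Bath) and nonzero (by Theorem~\ref{thm:Wu-basic}(2)), so that its zero locus is empty or of pure codimension one.
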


For $c=e_i$ Corollary \ref{cor:general-shift} is Theorem \ref{thm:main-unit}, and for $c=\one_r$ (so that $c_W=|J(W,f)|$) it is Maisonobe's formula \eqref{eq:Maisonobe}; in this sense the unit-shift ideals are the building blocks of all the Bernstein-Sato ideals $B^{a,b}_f$ of a free arrangement. Thus, it also generalizes Bath's formula for $B^g_{f'L}$ in \cite[Theorem 1.4]{Bat20b}. 

Theorem \ref{thm:main-unit} is deduced from a local statement about relative characteristic cycles (see \S\ref{sec:relD} for the notation). Let $D$ be an essential irreducible free arrangement with exponents $(1,d_2,\dots,d_n)$; exactly one exponent equals $1$ by Lemma \ref{lem:exponent-one}. Saito's criterion gives $r=1+\sum_{a=2}^n d_a$, so that
\begin{equation}\label{eq:rho}
\rho:=r-2n+1=\sum_{a=2}^n(d_a-2)\geq 0 .
\end{equation}
For $\nu\in\Z$ we put
\begin{equation}\label{eq:lambda-nu}
\lambda_\nu(s):=\sum_{j=1}^r s_j+n+\nu,\qquad \Gamma_\nu:=T^*_{\{0\}}X\times(\lambda_\nu=0)\subseteq T^*X\times\C^r,
\end{equation}
and we denote by $m_{\Gamma_\nu}(M^{-e_i}_f)$ the multiplicity of $\Gamma_\nu$ in the relative characteristic cycle of $M^{-e_i}_f$, with the convention that the multiplicity is $0$ if $\Gamma_\nu$ is not a component of the relative characteristic variety of $M^{-e_i}_f$.

\begin{theorem}\label{thm:main-local}
Let $D$ be an essential, irreducible, free central hyperplane arrangement in $X=\C^n$ with exponents $(1,d_2,\dots,d_n)$ and complete factorization $f=(f_1,\dots,f_r)$, and let $i\in\{1,\dots,r\}$. Then for every $\nu\in\Z$,
\begin{equation}\label{eq:main-local}
m_{\Gamma_\nu}(M^{-e_i}_f)=[t^\nu]\prod_{a=2}^n\big(1+t+\dots+t^{d_a-2}\big),
\end{equation}
where $[t^\nu]h(t)$ denotes the coefficient of $t^\nu$ in $h(t)$. Moreover, for $\alpha\in\C$ the hyperplane $(\sum_{j=1}^r s_j+\alpha=0)$ is contained in $Z(B^{-e_i}_f)$ if and only if $\alpha=n+\nu$ with $0\leq\nu\leq\rho$.
\end{theorem}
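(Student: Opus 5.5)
The plan follows the strategy announced in the abstract: prove coefficientwise upper bounds for the multiplicities, then use the total multiplicity from \cite{Wu20} to force equality.

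\textbf{Step 1: upper bounds.} Since $D$ is free, $\sD_X[s]f^s$ has an explicit presentation. Its annihilator is generated by the logarithmic operators $\delta-\sum_j s_j\,\delta(f_j)/f_j$ for $\delta\in\Der(-\log D)$ (linear type / Koszul-free property). The relative characteristic variety of $\sD_X[s]f^s$ is then the closure of the relative conormal of $f$. Its fiber over the origin is controlled by the logarithmic basis $\delta_1=E$ (Euler), $\delta_2,\dots,\delta_n$ of degrees $d_a$.

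The quotient $M^{-e_i}_f$ is cut out by the additional relation $f_i$. So near $T^*_{\{0\}}X$ one computes its relative characteristic cycle by restricting the symbols of $\delta_2,\dots,\delta_n$, together with $f_i$, to the conormal direction of the origin. Choosing a generic hyperplane $H$ through the origin, the symbols become homogeneous forms of degrees $d_a-1$ in the remaining variables. Equivalently, the Ziegler restriction of $D$ to $D_i$, or to a generic hyperplane, has a free multiarrangement basis of degrees $d_2,\dots,d_n$. Reducing once more gives an Artinian complete intersection $A=\C[y_1,\dots,y_{n-1}]/(g_2,\dots,g_n)$ with $\deg g_a=d_a-1$. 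Its Hilbert series is $\prod_a(1+\dots+t^{d_a-2})$.

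The Euler relation acts on graded pieces of a $b$-function-type filtration by $\lambda_\nu$ with $\nu$ the degree shift. So one gets a surjection, or filtration, showing that the $\lambda_\nu$-part of the multiplicity along $T^*_{\{0\}}X$ is at most $\dim A_\nu$. Concretely, I would filter $M^{-e_i}_f$ by the order filtration relative to $s$ and compare its graded module over $T^*_{\{0\}}X\times\C^r$ with $A\otimes\C[s]/(\lambda_\nu)$, degree by degree.

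\textbf{Step 2: equality.} \cite{Wu20} computes the total multiplicity of $M^{-e_i}_f$ along $T^*_{\{0\}}X\times\C^r$, counted with all $\lambda$-components, presumably as an Euler characteristic or as $\dim A=\prod_a(d_a-1)$ via the multiplicity formula for relative characteristic cycles of unit shifts. Summing the upper bounds over $\nu$ gives exactly this total. Hence every inequality is an equality, which proves \eqref{eq:main-local}.

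\textbf{Step 3: the zero locus.} By Bath's theorem (Theorem \ref{thm:Bath-principal-radical}), $B^{-e_i}_f$ is principal and radical. Its zero locus is the image of the relative characteristic variety's codimension-one components under projection to $\C^r$. Components of the form $(\sum s_j+\alpha=0)$ can only come from $\Gamma_\nu$, since the other components over the origin lie over non-full-sum hyperplanes of smaller edges. Such a hyperplane appears iff the multiplicity is positive. The polynomial $\prod_a(1+\dots+t^{d_a-2})$ has all coefficients positive exactly in degrees $0\le\nu\le\sum(d_a-2)=\rho$, which gives the claim.

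The main obstacle is Step 1: identifying the graded pieces of the relative characteristic cycle along $\Gamma_\nu$ with $A_\nu$. This requires showing that the symbols of the log vector fields restricted to a generic point form a regular sequence, and that the $\lambda$-grading matches the polynomial degree. I would attack this via Ziegler's theorem that restriction of a free arrangement gives a free multiarrangement with exponents $(d_2,\dots,d_n)$.
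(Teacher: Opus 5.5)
Your architecture is the paper's: coefficientwise bounds $\mu_\nu\le h_\nu$ from a complete intersection attached to the Ziegler restriction, then a squeeze by a total multiplicity. But Step~1, which carries all the content, is not supplied, and the mechanism you sketch would not work.

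In the relative filtration $s$ has degree $0$. So the principal symbols of $f_i$, $\widetilde E$, $\widetilde\theta_a$ are $f_i$, $\sum_\alpha x_\alpha\xi_\alpha$, $\sigma(\theta_a)$. None of them involves $s$, and on $T^*_{\{0\}}X$ every positive-degree coefficient vanishes. The symbols of your generators therefore cannot separate the components $\Gamma_\nu$, which share $T^*_{\{0\}}X$ and differ only in the hyperplane $(\lambda_\nu=0)$. (Relatedly, with $s$ in degree $0$ the relative characteristic variety of $\sD_X[s]f^s$ is $\Ch(j_*\sO_U)\times\C^r$, not the relative conormal; the latter belongs to the filtration with $s$ in degree $1$.)

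The forms of degree $d_a-1$ you need are the logarithmic quotients $\theta_a(f_j)/f_j$, in the zeroth-order, $s$-dependent part of $\widetilde\theta_a$. The hyperplane is $H=D_i$, forced by the relation $f_i$; Ziegler's theorem says nothing about generic hyperplanes.

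The missing mechanism runs as follows.
\begin{enumerate}
\item Localize $\C[s]$ at $(\lambda_\nu)$. By Bath's radicality the localization is killed by $\lambda_\nu$. By Sabbah's theorem at points $x\neq 0$ it is supported at the origin.
\item Kashiwara's equivalence over $\K_\nu$ turns $\mu_\nu$ into the dimension of the solution space of $f_iv=\widetilde Ev=\widetilde\theta_av=0$ in $\delta_{0,\K_\nu}$.
\item The Euler equation confines $v$ to the degree-$\nu$ piece, dual to $\C[H]_\nu$. This is also what excludes $\alpha\notin n+\N$ in your Step~3. The remaining equations form a matrix $T_\nu(s)$ affine in $s$.
\item Normalize the Saito basis by $\theta_a(f_i)=0$ and move along a line $s_0+\tau\hat\omega$ inside $(\lambda_\nu=0)$. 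The $\tau$-leading term of $T_\nu$ is minus the transpose of $(q_a)\mapsto\sum_a g_a(\omega)q_a$, where $g_a(\omega)=\sum_{j\neq i}\omega_j\,\overline{\theta_a(f_j)/f_j}$.
\item Semicontinuity of rank then gives $\mu_\nu\le h_\nu$, provided the $g_a(\omega)$ form a regular sequence for general $\omega$. This is a genuine lemma, since a single $\omega$ serves all $a$; the paper proves it via carrier flats of $D|_H$ and the localized Ziegler module.
\end{enumerate}

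Step~2 is misattributed. \cite{Wu20} does not compute $\sum_\nu\mu_\nu$ for the unit shift. Its Theorem~1.5 gives only the multiplicity of the single component $T^*_{\{0\}}X\times(\sum_js_j+n=0)$ in $M^{k_r,-k_r}_f$. To reach $\sum_\nu\mu_\nu$ you must filter $M^{k_r,-k_r}_f$ along a chain of unit steps and use additivity of $\CCrel_{n+r-1}$ and translation. The chain must be ordered so that only the $2k$ steps in direction $e_i$ meet that component, each $\nu$ occurring exactly once, while the other steps land outside the finite support of the multiplicities. Terao's factorization theorem is then needed to identify $(-1)^{n-1}\chi$ with $\prod_a(d_a-1)=\sum_\nu h_\nu$.
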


The right-hand side of \eqref{eq:main-local} is the Hilbert function of an Artinian complete intersection of $\C[x_2,\dots,x_n]$ cut out by a regular sequence of degrees $d_2-1,\dots,d_n-1$; it is positive exactly for $0\leq\nu\leq\rho$, and its total sum is $\prod_{a=2}^n(d_a-1)=(-1)^{n-1}\chi(\PP(X)\setminus\bigcup_{H\in D}\PP(H))$, which is the multiplicity computed in \cite[Theorem 1.5]{Wu20}. The proof of Theorem \ref{thm:main-local} goes as follows. Using a Saito basis adapted to $D_i$ and Ziegler's restriction theorem, we bound $m_{\Gamma_\nu}(M^{-e_i}_f)$ from above by the coefficient $h_\nu$ of the Hilbert series of a complete intersection $\C[x_2,\dots,x_n]/(g_2,\dots,g_n)$, where the $g_a$ are generic linear combinations of the logarithmic quotients $\theta_a(f_j)/f_j$; the point is that after localizing $\C[s]$ at the prime $(\lambda_\nu)$ the module $M^{-e_i}_f$ becomes a point-supported module over a field, and relative Kashiwara's equivalence turns the multiplicity into the dimension of a space of solutions of a system of linear equations depending on $s$, whose leading part in a suitable direction is the transpose of the multiplication map by the $g_a$. On the other hand, the multiplicity of $T^*_{\{0\}}X\times(\sum_j s_j+n=0)$ in $\CCrel(\sD_X[s]f^{s-k_r}/\sD_X[s]f^{s+k_r})$, $k\gg 0$, is the sum $\sum_\nu m_{\Gamma_\nu}(M^{-e_i}_f)$ by the additivity of relative characteristic cycles, and it equals $\sum_\nu h_\nu$ by \cite[Theorem 1.5]{Wu20} and Terao's factorization theorem. Hence all the upper bounds are equalities.

Theorem \ref{thm:main-local} refines \cite[Theorem 5.6]{Wu20}, which says that $(\sum_j s_j+n=0)$ is a component of $Z(B^{-e_i}_f)$. Together with the localization at the edges and a lemma on translation-invariant components (Lemma \ref{lem:ACC}), it gives both the lower and the upper bound in Theorem \ref{thm:main-unit}. 

\subsection{Coordinate monoid ideals}\label{sec:intro-coord}
Let $D$ be a central arrangement with complete factorization $f$, let $W_0=\bigcap_{j=1}^r D_j$ be its center and let $\ell=\rank(W_0)$. The essential arrangement $D^{W_0}$ in $X/W_0$ decomposes uniquely, after a linear change of coordinates, as a product
\begin{equation}\label{eq:product-decomp}
D^{W_0}=D^{(1)}\times\dots\times D^{(c)},\qquad X/W_0=V_1\oplus\dots\oplus V_c,
\end{equation}
of essential irreducible central arrangements $D^{(a)}$ in $V_a$. Let $I_a\subseteq\{1,\dots,r\}$ be the set of indices $j$ such that $D_j/W_0$ belongs to $D^{(a)}$ and let $n_a=\dim V_a$, so that $\{1,\dots,r\}=I_1\sqcup\dots\sqcup I_c$ and $\ell=n_1+\dots+n_c$. We define the \emph{Euler relations}
\begin{equation}\label{eq:Euler-relations}
\lambda^{(a)}:=\sum_{i\in I_a}s_i+n_a\qquad (a=1,\dots,c).
\end{equation}

\begin{theorem}\label{thm:main-coord}
Let $D$ be a free central hyperplane arrangement in $X=\C^n$ with complete factorization $f$. Then, with notation as above,
\[
B^{K_{[r]}}_f=\Ann_{\C[s]}N^{K_{[r]}}_f=\big(\lambda^{(1)},\dots,\lambda^{(c)}\big).
\]
In particular, if $D$ is essential and irreducible, then $B^{K_{[r]}}_f=(\sum_{i=1}^r s_i+n)$.
\end{theorem}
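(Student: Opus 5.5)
We prove the two inclusions separately, first reducing to the essential case.

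\emph{Reduction.} Choose coordinates on $X$ adapted to $W_0$ and to the decomposition \eqref{eq:product-decomp}. Then $f_D$ is a product of polynomials in disjoint groups of variables, the variables along $W_0$ do not occur, and $f$ is the concatenation of the complete factorizations $f^{(a)}$ of $D^{(a)}$ on $V_a$. Freeness is inherited by each factor, since $\Der(-\log)$ of a product arrangement is the direct sum of the pieces. For a product, the module $N^{K_{[r]}}_f$ is built from the external tensor product of the modules $\sD_{V_a}[s^{(a)}](f^{(a)})^{s^{(a)}}$, with one extra factor $\sO_{W_0}$. The submodule $\sum_j\sD_X[s]f^{s+e_j}$ is then the sum over $a$ of (everything) $\otimes$ (the $a$-th coordinate submodule) $\otimes$ (everything). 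Hence
\[
N^{K_{[r]}}_f\cong\bigotimes_a N^{K_{[I_a]}}_{f^{(a)}}
\]
(up to the $\sO_{W_0}$ factor), an external tensor product over $\C$ of modules in disjoint sets of variables $s^{(a)}$. The annihilator in $\C[s]=\bigotimes_a\C[s^{(a)}]$ of such a tensor product of nonzero modules is the sum of the extended annihilators; this holds because each factor is $\C[s^{(a)}]$-torsion-free over its annihilator quotient up to a filtration argument, or one can simply check it directly once each annihilator is a principal prime. So it suffices to treat $D$ essential and irreducible, and to show $B^{K_{[r]}}_f=(\lambda)$ with $\lambda=\sum_j s_j+n$.

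\emph{Inclusion $\lambda\in B$.} We use the Euler field $E=\sum x_k\partial_k$: $E\cdot f^s=(\sum_j s_j)f^s$. Write $nE=\sum_k\partial_kx_k$ minus $n$; more precisely, $\sum_k\partial_k x_k f^s=(\sum_js_j+n)f^s$. Since $D$ is essential, each $x_k$ is a linear combination of the $f_j$, so $x_kf^s\in\sum_j\C f^{s+e_j}$. Therefore $\lambda f^s\in\sum_j\sD_X[s]f^{s+e_j}$, so $\lambda$ annihilates the cyclic module $N^{K_{[r]}}_f$.

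\emph{Reverse inclusion.} Since $(\lambda)$ is prime, it suffices to show $N:=N^{K_{[r]}}_f\neq0$ after localizing at $(\lambda)$, i.e.\ that $\lambda$ is not a proper factor of the annihilator and that no element outside $(\lambda)$ kills $N$. We use the surjection $N\twoheadrightarrow M^{-e_i}_f\otimes$ nothing; instead we use that $N$ is a quotient of $M^{-e_i}_f$. Since $f^{s+e_j}=f_jf^s$, $N$ is supported on $\bigcap_j D_j=\{0\}$, hence $N$ is relatively point-supported. Its relative characteristic cycle is then $T^*_{\{0\}}X\times Z$ with $Z\subseteq(\lambda=0)$. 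By relative Kashiwara equivalence, $N\cong\iota_+N'$ with $N'=N/(x)N$ restricted appropriately, a $\C[s]$-module killed by $\lambda$. The key step, and the main obstacle, is to show that $N'$ is \emph{torsion-free} over $\C[s]/(\lambda)$ (in particular nonzero generically on the hyperplane). I would do this by specializing $s$ to a general point $\alpha$ of $(\lambda=0)$. Using freeness (Maisonobe's/Narváez's result that $\sD_X[s]f^s$ has annihilator generated by the logarithmic vector fields $\theta-\sum_j s_j\theta(f_j)/f_j$, and its flatness over $\C[s]$), I would identify the fiber $N\otimes\C_\alpha$ with $\sD_X/(\Ann+\sum_j\sD_Xf_j)$. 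Its $\delta$-multiplicity at $0$ is the value at generic $\alpha$ of the Hilbert-type count of Theorem~\ref{thm:main-local}, in particular $\geq1$ by \cite[Theorem 1.5]{Wu20}, and hence nonzero. Generic nonvanishing of fibers shows that every $b\notin(\lambda)$ acts nontrivially, so $B=(\lambda)$. If that identification proves awkward, I would instead check directly that $f^s\notin\sum_j\sD_X[s]f^{s+e_j}+\mathfrak m N$ by a degree argument: the $x$-degree grading is determined by $E$ and $\lambda$, and a relation $b(s)f^s\in\sum\sD f_jf^s$ would force $b$ to vanish on $\lambda=0$.
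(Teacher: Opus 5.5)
\textbf{There is a genuine gap in the reverse inclusion $B^{K_{[r]}}_f\subseteq(\lambda)$.} The other parts are fine. Your reduction to the essential irreducible case is sound: the tensor decomposition of $N^{K_{[r]}}_f$ is correct, and over $\C$ one always has $\Ann(M\otimes_\C N)=\Ann(M)\otimes B+A\otimes\Ann(N)$. Your proof that $\lambda\in B^{K_{[r]}}_f$ is the paper's.

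The gap is showing that $N:=N^{K_{[r]}}_f$ does not vanish along $(\lambda=0)$. The multiplicity results you invoke concern other modules. Theorem \ref{thm:main-local} is about $M^{-e_i}_f$, and Theorem \ref{thm:Wu-dense} is about $M^{k_r,-k_r}_f$. But $N$ is a quotient of $M^{-e_i}_f$, hence a subquotient of $M^{k_r,-k_r}_f$, so these results only bound $N$ from above. Even knowing $(M^{-e_i}_f)_{(\lambda)}\simeq\delta_{0,\K}$ with $\K=\Frac(\C[s]/(\lambda))$, its quotient $N_{(\lambda)}$ could a priori be $0$.

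Your fallback ``degree argument'' only restates the claim, and no argument using just $E$ and $\lambda$ can work. For $f=(x,y)$ in $\C^2$ the true answer is $(s_1+1,s_2+1)\not\subseteq(s_1+s_2+2)$, although the Euler grading looks the same. Irreducibility is never actually used in your reverse inclusion.

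The missing idea is Lemma \ref{lem:exponent-one}: an essential, irreducible, free $D$ has a Saito basis $E,\theta_2,\dots,\theta_n$ with $\deg\theta_a\geq 2$. Then the following all lie in $(x_1,\dots,x_n)=(f_1,\dots,f_r)$:
\begin{itemize}
\item the coefficients of $\theta_a$;
\item their first derivatives;
\item the quotients $\theta_a(f_j)/f_j$.
\end{itemize}
Writing $a\,\partial_{x_k}=\partial_{x_k}a-\partial a/\partial x_k$ gives $\wt\theta_a\in\sD_X[s](x_1,\dots,x_n)$. Also $\wt E+\lambda=\sum_k\partial_{x_k}x_k\in\sD_X(x_1,\dots,x_n)$. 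By Theorem \ref{thm:log-ann} and Lemma \ref{lem:cyclic}, the left ideal presenting $N$ is therefore $\sD_X[s](x_1,\dots,x_n)+\sD_X[s]\lambda$. Hence $N\simeq\Delta\otimes_\C\C[s]/(\lambda)$ and $B^{K_{[r]}}_f=(\lambda)$.

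In your fiber language: every specialized generator kills $\delta_0$, so $N\otimes\C_\alpha\simeq\Delta\neq 0$ for every $\alpha$ with $\lambda(\alpha)=0$, which is exactly the nonvanishing you needed. The paper runs this computation directly with the product basis $E_a,\theta_{a,j},\partial_{z_q}$, so it needs no tensor reduction.

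Two smaller slips:
\begin{itemize}
\item The Kashiwara datum is the common kernel of the $x_k$, not $N/(x)N$, which is zero for $\delta$-type modules since each $x_k$ acts surjectively on $\Delta$.
\item Computing the fiber needs only right exactness of $\otimes$, not flatness.
\end{itemize}
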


The last assertion sharpens \cite[Lemma 5.5]{Wu20}, which says that $\sum_i s_i+n\in B^{K_{[r]}}_f$ whenever a subset of the $f_j$ is a coordinate system. The proof of the above theorem also depends on relative Kashiwara's equivalence as the $\mathcal O_X$-module support of $N^{K_{[r]}}_f$ is the center of $D$.



\subsection{A nonlinear component}\label{sec:intro-counter}
Our last result concerns the reduced free arrangement of four lines
\[
f=(f_1,f_2,f_3,f_4)=(x,y,x+y,x+2y)\quad\text{in } X=\C^2,
\]
and the monoid ideal $K=\mon{3e_1,3e_2}$, so that $N^K_f=\sD_X[s]f^s/(\sD_X[s]x^3f^s+\sD_X[s]y^3f^s)$. For $d\in\Z$ let $\lambda_d=s_1+s_2+s_3+s_4+2+d$ as in \eqref{eq:lambda-nu} (here $n=2$), and put
\begin{equation}\label{eq:Phi-intro}
\Phi:=(2s_1-s_2+s_3+1)(2s_1-s_2+s_3+4)+2(s_1+2)(s_2+2).
\end{equation}

\begin{theorem}\label{thm:main-counter}
With notation as above,
\[
B^K_f=(\lambda_0)\cap(\lambda_1)\cap(\lambda_2)\cap(\lambda_3,\Phi)\cap(s_1+3,\,s_2+3,\,s_3,\,s_4).
\]
The ideal $B^K_f$ is radical, and $(\lambda_3,\Phi)$ is a prime ideal defining an irreducible nonlinear quadric surface in the hyperplane $(\lambda_3=0)$. 
\end{theorem}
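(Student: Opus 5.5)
The plan is to use that $N^K_f$ is supported at the origin and reduce everything to a finite linear-algebra computation over $\C[s]$, graded by the Euler weight.

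\emph{Step 1: support.} At a point with $x\neq 0$ the section $x^3f^s$ generates $\sD_X[s]f^s$ locally, so $N^K_f$ vanishes there. The same holds at points with $y\neq 0$, using $y^3f^s$. Hence $N^K_f$ is supported at the origin. By relative Kashiwara's equivalence, $N^K_f\cong i_+V$, where $V=\{u\in N^K_f: xu=yu=0\}$ is a finitely generated $\C[s]$-module, and $B^K_f=\Ann_{\C[s]}V$.

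\emph{Step 2: presentation.} The arrangement is free with Saito basis $E=x\partial_x+y\partial_y$ and a logarithmic derivation $\theta$ of degree $3$. Since $f$ is a free arrangement (so the annihilator of $f^s$ is of linear type), I will use
\[
\Ann_{\sD_X[s]}f^s=\sD_X[s]\Big(E-\textstyle\sum_j s_j\Big)+\sD_X[s]\Big(\theta-\textstyle\sum_j\frac{\theta(f_j)}{f_j}s_j\Big).
\]
This gives
\[
N^K_f=\sD_X[s]\big/\big(\Ann f^s+\sD_X[s]x^3+\sD_X[s]y^3\big).
\]
Everything here is homogeneous for the weight with $x,y$ of weight $1$ and $\partial_x,\partial_y$ of weight $-1$. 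So $N^K_f=\bigoplus_w N_w$, and $E$ acts on $N_w$ by $\sum_j s_j+w$. On $V$ we have $E=\partial_xx+\partial_yy-2=-2$. Hence an element of $V$ of weight $d$ is killed by $\lambda_d=\sum_j s_j+2+d$.

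Thus $V=\bigoplus_d V_d$ with $\lambda_d V_d=0$. Because $x^3$ and $y^3$ are killed, only finitely many weights contribute; I expect $d\in\{0,1,2,3\}$. For each such $d$, $V_d$ is the kernel of $(x,y)$ on $N_d$. Equivalently, by the $i_+$ structure, $V_d\cong N_{d}/(\text{images of }\partial_x,\partial_y\text{ from lower weight})$ in the appropriate form. This is an explicit finitely generated $\C[s]$-module presented by monomials $x^ay^b\partial^c$ modulo the relations above.

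\emph{Step 3: compute each weight piece.} For each $d$ I will compute a presentation matrix of $V_d$ over $\C[s]$ by a Gröbner basis computation in the Weyl algebra over $\C[s]$ (the AI/computer-assisted part). I will then compute its annihilator and check $\Ann V=\bigcap_d\Ann V_d$. I expect:
\begin{itemize}
\item $V_0,V_1,V_2$ are generically free of rank $\geq1$ over $\C[s]/(\lambda_d)$, which gives $(\lambda_0),(\lambda_1),(\lambda_2)$;
\item $V_3$ has Fitting/annihilator ideal $(\lambda_3,\Phi)$, coming from a $2\times2$-type determinant in the transposed multiplication map by $x^3,y^3$;
\item a torsion piece contributes the maximal ideal $(s_1+3,s_2+3,s_3,s_4)$ (note it lies on $\lambda_3=0$: $-3-3+0+0+2+3=-1$, so one must check which $\lambda_d$ it belongs to and that it is not absorbed by the other components).
\end{itemize}
The upper bound $B^K_f\subseteq$ the intersection comes from exhibiting, over each component, an explicit element of $V_d$ whose annihilator is exactly that prime. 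The lower bound comes from the presentation matrices.

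\emph{Step 4: the quadric.} To show $(\lambda_3,\Phi)$ is prime, eliminate $s_4$ via $\lambda_3$. Then $\Phi$ is an irreducible quadric in $(s_1,s_2,s_3)$; checking its quadratic part has rank $\ge3$ suffices. For radicality, the intersection of primes is radical, so it remains to confirm that $\Ann V$ equals that intersection with no embedded nilpotent contributions. I will check this via the explicit matrices.

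\emph{Main obstacle.} I expect the main obstacle to be Step 3 in weight $3$ together with the isolated point. One must control the full $\C[s]$-module structure of $V_3$, not just its support, to show the annihilator is exactly $(\lambda_3,\Phi)\cap(s_1+3,s_2+3,s_3,s_4)$ restricted there. My first attempt would be to reduce $V_3$ to a small matrix $A(s)$ over $\C[s]/(\lambda_3)$, with $\Phi=\det A$ up to a unit. I would then analyze $\coker A$ at the point where all entries vanish.
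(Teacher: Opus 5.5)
Your framework matches the paper's: support at the origin, relative Kashiwara, the logarithmic presentation $\sD_X[s]/\sD_X[s](P_E,P_\delta,x^3,y^3)$, and the Euler-weight splitting $V=\bigoplus_d V_d$ with $\lambda_dV_d=0$. But the proof itself is missing. Step 3 defers everything to an unspecified Gr\"obner computation, and several of the outcomes you predict for it are wrong.

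\textbf{The missing finite presentation.} What is needed is an honest finite presentation of the $\C[s]$-module $V$. Fiberwise ranks only give the support, hence only the radical. The paper obtains the presentation in Proposition \ref{prop:universal-jet}:
\begin{itemize}
\item For every $\C[s]$-module $W$, Kashiwara gives $\Hom_{\C[s]}(V,W)\simeq\Hom_{\sD_X[s]}(N^K_f,\Delta\otimes_\C W)$. This is the set of $w\in\Delta\otimes W$ with $x^3w=y^3w=P_Ew=P_\delta w=0$.
\item The first two conditions confine $w$ to the nine-dimensional jet space spanned by $\partial_x^i\partial_y^j\delta_0$, $0\le i,j\le 2$.
\item On this space $P_E$ acts by $-\lambda_d$ in degree $d=i+j$, and $P_\delta$ acts by explicit maps of degree $-2$.
\item Yoneda then gives $V\simeq\coker(A^t)$ and $V_d\simeq\C[s]^{n_d}/(\lambda_d\C[s]^{n_d}+\im M_d^t)$. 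Here $M_2$ is $1\times3$, $M_3$ is $2\times2$ with $\det M_3=64\Phi$, and $M_4$ is $3\times1$.
\end{itemize}
If you only intend a machine computation, the direct elimination $\sD_X[s](P_E,P_\delta,x^3,y^3)\cap\C[s]$ from Lemma \ref{lem:cyclic} already gives $B^K_f$; that is the paper's independent check. In that case your weight decomposition adds nothing.

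\textbf{The predicted weights and components are wrong.} These errors would send your final analysis in the wrong direction.
\begin{itemize}
\item The weights run over $0\le d\le 4$, not $0\le d\le 3$. The top jet $\partial_x^2\partial_y^2\delta_0$ gives $V_4\simeq\C[s]/(\lambda_4,\,s_2+3,\,2s_1-s_2+s_3+3,\,s_1+3)$.
\item $V_4$ is exactly the source of the component $(s_1+3,s_2+3,s_3,s_4)$, which lies on $\lambda_4=0$. Your own arithmetic gives $\lambda_3=-1$ at that point, so it is comaximal with $(\lambda_3,\Phi)$ and cannot come from $V_3$.
\item The diagonal entries $-8(2s_1-s_2+s_3+1)$ and $-8(2s_1-s_2+s_3+4)$ of $M_3$ never vanish simultaneously. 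So there is no point where all entries of the weight-$3$ matrix vanish, and ``analyzing $\coker A$ at that point'' yields nothing.
\item In fact $\Ann V_3=(\lambda_3,\Phi)$ exactly. The inclusion $\supseteq$ follows from the adjugate identity. The inclusion $\subseteq$ follows because the support of the cokernel of a square matrix is $Z(\det)$, and $(\Phi)$ is prime.
\item The $2\times 2$ determinant comes from $P_\delta\colon J_3\to J_1$, not from a transposed multiplication by $x^3,y^3$; those relations only truncate the jets.
\end{itemize}
Your Step 4 (rank $3$ of the quadratic part of $\Phi$) and your radicality argument (an intersection of primes is radical) agree with the paper.
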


For comparison, Theorems \ref{thm:main-unit} and \ref{thm:main-coord} give $B^{-e_1}_f=\big((s_1+1)\lambda_0\lambda_1\big)$ and $B^{K_{[4]}}_f=(\lambda_0)$ for the same arrangement. 
Note that the exponential image $\Exp(Z(B^K_f))\subseteq(\C^*)^4$ of the zero locus in Theorem \ref{thm:main-counter} is nevertheless the subtorus $\{t_1t_2t_3t_4=1\}$, in accordance with \cite[Theorem A]{Wu26}; see Remark \ref{rem:conjectures}. The proof of Theorem \ref{thm:main-counter} also relies on the relative Kashiwara's equivalence: the relations $x^3=y^3=0$ given by $K$ reduce the computation to linear algebra on a nine-dimensional truncation of the $\delta$-module at the origin, and a Yoneda-type argument (Proposition \ref{prop:universal-jet}) shows that the resulting matrices present the actual finite $\C[s]$-module underlying $N^K_f$, not merely its fibers. Consequently the ideal itself is obtained, not only its radical. Theorem \ref{thm:main-counter} also indicate that one could not expect a general formula of $B^K_f$ even for free hyperplane arrangements. 

\subsection{Organization}
In Section \ref{sec:prelim} we fix notation and collect the results on relative $\sD$-modules, on free arrangements, on logarithmic annihilators and on Bernstein-Sato ideals that are used later.
Section \ref{sec:local} proves Theorem \ref{thm:main-local}, and Section \ref{sec:global} deduces Theorem \ref{thm:main-unit} and Corollary \ref{cor:general-shift}. Theorems \ref{thm:main-coord} is proved in Section \ref{sec:coord}. Section \ref{sec:counter} gives Theorem \ref{thm:main-counter}.

\subsubsection*{Statement on the use of AI systems}
Two of the results of this paper were obtained with the help of automated reasoning systems, and we record here what each of them contributed. The proof of Theorem \ref{thm:main-local}, that is, the local multiplicity theorem of Section \ref{sec:local} together with the auxiliary statements of \S\S\ref{sec:adapted-basis}--\ref{sec:total} on which it rests, was produced by the Rethlas system \cite{Rethlas} running GPT-5.6 Sol. The example underlying Theorem \ref{thm:main-counter} was found by Danus \cite{Danus} running GPT-5.6 Sol and Fable 5; the computation of $B^K_f$ presented in Section \ref{sec:counter} has the same origin. All statements and all proofs have been checked and rephrased by the authors, who are solely responsible for the correctness of the results and for the final form of the text.


\section{Preliminaries}\label{sec:prelim}

\subsection{Relative $\sD$-modules}\label{sec:relD}
We briefly recall the notation of \cite[\S 2]{Wu20}; see also \cite{BVWZ19,Mai16a}. Let $X$ be a smooth complex algebraic variety (or a complex manifold) of dimension $n$ and let $R$ be a commutative noetherian $\C$-algebra which is a domain of finite Krull dimension and whose localizations at prime ideals are regular; in this paper $R$ is always $\C[s]$, a localization of $\C[s]$, or a field extension of $\C$. Put $A_R=\sD_X\otimes_\C R$ with $\sD_X$ the sheaf of (algebraic or analytic) differential operators on $X$; thus $A_{\C[s]}=\sD_X[s]$. A coherent $A_R$-module $M$ admits (locally) a relative good filtration $F_\bullet^{\mathrm{rel}}M$, compatible with the filtration $F_\bullet\sD_X\otimes_\C R$ of $A_R$ by the order of differential operators (elements of $R$ have degree zero), such that $\gr^{\mathrm{rel}}_\bullet M$ is coherent over $\gr_\bullet\sD_X\otimes_\C R\simeq\sO_{T^*X}\otimes_\C R$. The support of $\gr^{\mathrm{rel}}_\bullet M$ in $T^*X\times\Spec R$ is the \emph{relative characteristic variety} $\Chrel(M)$, and the cycle
\[
\CCrel(M)=\sum_{\Gamma}m_\Gamma(M)\,[\Gamma],
\]
where $\Gamma$ runs over the irreducible components of $\Chrel(M)$ and $m_\Gamma(M)$ is the length of $\gr^{\mathrm{rel}}_\bullet M$ at the generic point of $\Gamma$, is the \emph{relative characteristic cycle}; both are independent of the good filtration. We write $\CCrel_k(M)$ for the part of $\CCrel(M)$ of pure dimension $k$, and we put $m_\Gamma(M)=0$ if $\Gamma$ is not a component of $\Chrel(M)$. When $R$ is a field we simply write $\Ch$ and $\CC$. The module $M$ is \emph{relative holonomic} if every irreducible component of $\Chrel(M)$ is of the form $\Lambda\times S$ with $\Lambda\subseteq T^*X$ an irreducible conic Lagrangian subvariety and $S\subseteq\Spec R$ irreducible \cite[Definition 3.2.3]{BVWZ19}. Relative holonomic modules form an abelian subcategory of the category of coherent $A_R$-modules \cite[3.2.4]{BVWZ19}; in particular quotients of relative holonomic modules are relative holonomic.

The \emph{Bernstein-Sato ideal} of a coherent $A_R$-module $M$ is $B_M:=\Ann_R(M)$. For a prime ideal $\mathfrak q\subseteq R$ we write $M_{\mathfrak q}=M\otimes_R R_{\mathfrak q}$, a coherent $A_{R_{\mathfrak q}}$-module. Since $R$ is central, $B_{M_{\mathfrak q}}=(B_M)_{\mathfrak q}$, and since localization is exact, a relative good filtration of $M$ localizes to one of $M_{\mathfrak q}$, so that
\begin{equation}\label{eq:localize-CC}
\CCrel(M_{\mathfrak q})=\CCrel(M)_{\mathfrak q},
\end{equation}
where the right-hand side is the sum of the terms $m_\Gamma(M)[\Gamma]$ over the components $\Gamma$ whose projection to $\Spec R$ contains $\mathfrak q$; see \cite[\S 2.2]{Wu20}. We write $\supp_R(M)$ for the set of primes $\mathfrak q$ with $M_{\mathfrak q}\neq 0$ and $p_2\colon T^*X\times\Spec R\to\Spec R$ for the projection.

\begin{lemma}[{\cite[Lemma 3.4.1]{BVWZ19}, \cite[Lemma 2.2]{Wu20}}]\label{lem:Z=supp}
If $M$ is a relative holonomic $A_R$-module, then $Z(B_M)=p_2(\Chrel(M))$ and $Z(B_M)=\supp_R(M)$.
\end{lemma}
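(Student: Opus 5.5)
The statement is Lemma \ref{lem:Z=supp}: for a relative holonomic $A_R$-module $M$ we have $Z(B_M)=p_2(\Chrel(M))=\supp_R(M)$. I would prove the three-way equality in two steps. First show $Z(B_M)=\supp_R(M)$, which is pure commutative algebra. Then show $\supp_R(M)=p_2(\Chrel(M))$, which is where relative holonomicity is used. Everything can be checked locally on $X$, and by quasi-compactness I may assume $X$ is affine and $M$ is globally generated by finitely many sections $u_1,\dots,u_k$.

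For $Z(B_M)\supseteq\supp_R(M)$, take a prime $\mathfrak q$ not containing $B_M$. Then some $b\in B_M\setminus\mathfrak q$ becomes a unit in $R_{\mathfrak q}$ and kills $M_{\mathfrak q}$, so $M_{\mathfrak q}=0$. The reverse inclusion is the real content: I need $M_{\mathfrak q}=0$ to force $B_M\not\subseteq\mathfrak q$. Since $M$ is not finitely generated over $R$, I cannot use the usual annihilator/support argument directly. I would instead pass through the characteristic variety.

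The key step is $\supp_R(M)=p_2(\Chrel(M))$. Fix a relative good filtration $F$. By \eqref{eq:localize-CC}, $F$ localizes, so $M_{\mathfrak q}=0$ if and only if $\gr^{\mathrm{rel}}(M)_{\mathfrak q}=0$. For the nontrivial direction, a good filtration of a nonzero module has nonzero graded module, and locally finite generation lets the filtration be exhausted. The module $\gr^{\mathrm{rel}}M$ is coherent over $\sO_{T^*X}\otimes R$, hence finitely generated over a noetherian ring. Its localization at $\mathfrak q$ vanishes exactly when $\mathfrak q\notin p_2(\supp\gr^{\mathrm{rel}}M)$. Here I use that $p_2$ of the support is closed: by relative holonomicity each component has the form $\Lambda\times S$, and the conic structure gives $p_2(\Lambda\times S)=S$. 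This yields $\supp_R(M)=p_2(\Chrel(M))$, and shows the set is closed and equal to $\bigcup S$.

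To finish $Z(B_M)\subseteq\supp_R(M)$, let $I=\bigcap_S I(S)$ be the radical ideal of the closed set $\bigcup S$. I must show some power of $I$ annihilates $M$; this is the main obstacle. I would argue by a d\'evissage on the components, as in \cite{BVWZ19}. Replace $R$ by $R/\Ann$ along the support and show that $M$ has a finite filtration by $A_R$-submodules whose graded pieces are $(R/\mathfrak p_S)$-torsion-killed modules. Concretely, $\gr^{\mathrm{rel}}M$ is killed by $J^N$ with $J=\sqrt{\Ann(\gr^{\mathrm{rel}}M)}\supseteq I\cdot(\sO\otimes R)$. An element $b\in I^N\subseteq R$ acts with degree zero and kills $\gr^{\mathrm{rel}}M$, so $b$ maps $F_pM$ into $F_{p-1}M$. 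I would then iterate, using the relative holonomicity, to bound the length of the filtration in a way uniform in $p$, which gives $b^m M=0$. The uniform bound is the delicate point, and I would borrow the argument of \cite[Lemma 3.4.1]{BVWZ19}, which uses the duality or purity of relative holonomic modules. Once that is in place, $I^{Nm}\subseteq B_M$, so $Z(B_M)\subseteq Z(I)=\supp_R(M)$, completing the proof.
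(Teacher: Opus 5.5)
The paper does not prove this lemma; it quotes it from the cited references. Judged on its own, your proposal has a genuine gap at exactly the step you flag as the main obstacle, the inclusion $Z(B_M)\subseteq\supp_R(M)$. From $b\,F_pM\subseteq F_{p-1}M$ you only get $b^{p-p_1+1}F_pM=0$, where $F_pM=0$ for $p<p_1$. That exponent grows with $p$, and the number of steps from $F_pM$ down to $F_{p_1-1}M=0$ is not bounded in $p$. You defer the ``uniform bound via duality or purity'' to a reference instead of supplying it, so the key inclusion is not proved. It is also not needed. Your other steps are fine. The inclusion $\supp_R(M)\subseteq Z(B_M)$ is correct, and $\supp_R(M)=p_2(\Chrel(M))$ follows as you say; relative holonomicity is what makes $p_2(\Chrel(M))$ closed, and without it you only get its closure.

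The missing idea is that $R$ is central in $A_R$ and $M$ is finitely generated over $A_R$ on each of finitely many affine opens. Finite generation over $R$ is not required. If $m_1,\dots,m_k$ generate $M$ over $A_R$, then $b\in R$ kills $M$ if and only if it kills every $m_i$, because $b\,Pm_i=P\,bm_i$ for $P\in A_R$. Now suppose $M_{\mathfrak q}=0$. Then each $m_i/1$ vanishes in $M_{\mathfrak q}$, so $t_im_i=0$ for some $t_i\in R\setminus\mathfrak q$. Hence $t=\prod_it_i$ lies in $B_M\setminus\mathfrak q$. The local statement suffices: for a finite affine cover, $B_M$ is the finite intersection of the local annihilators and $\supp_R(M)$ is the union of the local supports. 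This proves $Z(B_M)=\supp_R(M)$ for every coherent $A_R$-module, with no holonomicity at all. The same observation repairs your filtration argument. The generators lie in some $F_{p_0}M$, so $b^{p_0-p_1+1}$ kills them and therefore kills all of $M$; the uniformity you were looking for comes directly from centrality.
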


Now let $f=(f_1,\dots,f_r)$ be an $r$-tuple of regular functions on $X$ and let $j\colon U=X\setminus(\prod_i f_i=0)\hookrightarrow X$. For $a\in\Z^r$ the map
\begin{equation}\label{eq:translation}
\sigma_a\colon j_*(\sO_U[s]f^s)\to j_*(\sO_U[s]f^s),\qquad g(s)f^s\mapsto g(s-a)f^{s-a},
\end{equation}
is a $\sD_X$-linear bijection satisfying $\sigma_a(b(s)m)=b(s-a)\sigma_a(m)$, and it maps $\sD_X[s]f^{s-b}$ onto $\sD_X[s]f^{s-a-b}$. Hence $\sigma_a$ induces an isomorphism $M^{b,c}_f\simeq M^{a+b,a+c}_f$ of $\sD_X$-modules which is semilinear with respect to the automorphism $\tau_a\colon b(s)\mapsto b(s-a)$ of $\C[s]$. Consequently $B^{a+b,a+c}_f=\tau_a(B^{b,c}_f)$, $Z(B^{a+b,a+c}_f)=Z(B^{b,c}_f)+a$, and $\CCrel(M^{a+b,a+c}_f)$ is the image of $\CCrel(M^{b,c}_f)$ under the translation by $a$ of the second factor of $T^*X\times\C^r$. We refer to these facts as ``translation'' or ``substitution''. The same applies to the modules $N^K_f$.

\begin{theorem}[{\cite[Theorem 3.2 and Proposition 3.3]{Wu20}}]\label{thm:Wu-basic}
Let $a\geq b\geq c$ in $\Z^r$ with $a\neq b$.
\begin{enumerate}
\item $\sD_X[s]f^{s+a}$ is relative holonomic and $\CCrel(\sD_X[s]f^{s+a})=\CC(j_*\sO_U)\times\C^r$.
\item $M^{a,b}_f$ is relative holonomic, $\dim\Chrel(M^{a,b}_f)=n+r-1$ (if $M^{a,b}_f\not=0$), and $p_2(\Chrel(M^{a,b}_f))=Z(B^{a,b}_f)$.
\item $\CCrel_{n+r-1}(M^{a,c}_f)=\CCrel_{n+r-1}(M^{a,b}_f)+\CCrel_{n+r-1}(M^{b,c}_f)$, and consequently
\[
Z_{r-1}(B^{a,c}_f)=Z_{r-1}(B^{a,b}_f)\cup Z_{r-1}(B^{b,c}_f),
\]
where $Z_{r-1}$ denotes the part of pure dimension $r-1$ of the zero locus.
\end{enumerate}
\end{theorem}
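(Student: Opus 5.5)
The plan is to treat the three assertions in order, reducing everything to the unshifted module $\sD_X[s]f^s$ by translation. For (1), the translation $\sigma_{-a}$ of \eqref{eq:translation} identifies $\sD_X[s]f^{s+a}$ with $\sD_X[s]f^s$ up to the automorphism $\tau_{-a}$ of $\C[s]$. Since $\CC(j_*\sO_U)\times\C^r$ is invariant under translation in the second factor, it suffices to treat $a=0$.

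\emph{Containment.} By Ginsburg's and Maisonobe's description of the relative characteristic variety of $\sD_X[s]f^s$, the variety $\Chrel(\sD_X[s]f^s)$ is contained in a product $\Lambda\times\C^r$. Here $\Lambda$ is the union of the conormal varieties appearing in $\CC(j_*\sO_U)$, namely the closure of the relative conormal spaces of $f$ restricted to $s$-directions, taken at $s=0$. This gives relative holonomicity: every component has the form $\Lambda_k\times S_k$ with $\Lambda_k$ conic Lagrangian.

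\emph{Multiplicities.} Specialize at a very general $\alpha\in\C^r$. Generic flatness of $\gr^{\mathrm{rel}}$ over $\C[s]$ identifies the fibre of the relative cycle at $\alpha$ with $\CC(\sD_X f^\alpha)$. For $\alpha$ outside the countably many translated hyperplanes where $\sD_Xf^\alpha$ fails to equal $j_*(\sO_U f^\alpha)$, we have $\CC(\sD_Xf^\alpha)=\CC(j_*\mathcal L_\alpha)=\CC(j_*\sO_U)$, because the rank-one local system $\mathcal L_\alpha$ has the same characteristic cycle as $\sO_U$. Any component $\Lambda_k\times S_k$ with $S_k\neq\C^r$ is invisible at very general $\alpha$. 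However, such a component cannot occur: $\sD_X[s]f^s\subseteq j_*(\sO_U[s]f^s)$ is $\C[s]$-torsion free, and the relative characteristic variety of a torsion-free relative holonomic module has only components dominating $\Spec\C[s]$ (purity, \cite{BVWZ19}). This yields $\CCrel=\CC(j_*\sO_U)\times\C^r$. I expect this generic-fibre identification, and especially the exclusion of non-dominant components, to be the main obstacle. If the purity argument does not go through directly, I would first try to use the grade of $\gr^{\mathrm{rel}}$ (pure grade $n$, Cohen–Macaulay-type behaviour).

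For (2), $M^{a,b}_f$ is a quotient of the relative holonomic module $\sD_X[s]f^{s-a}$, so it is relative holonomic. By Sabbah's theorem on the existence of Bernstein–Sato polynomials, $B^{a,b}_f\neq 0$, so every component $\Lambda\times S$ has $S\subsetneq\C^r$; hence $\dim\Chrel\le n+r-1$. For the reverse inequality, use the short exact sequence $0\to\sD_X[s]f^{s-b}\to\sD_X[s]f^{s-a}\to M^{a,b}_f\to 0$. Both outer-left terms have $\gr^{\mathrm{rel}}$ of pure grade $n$ (by (1) and purity), so $\mathrm{Ext}$-grade considerations give $j(M^{a,b}_f)\le n+1$, i.e.\ $\dim\ge n+r-1$ when $M^{a,b}_f\neq0$. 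The equality $p_2(\Chrel(M^{a,b}_f))=Z(B^{a,b}_f)$ is Lemma \ref{lem:Z=supp}.

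For (3), consider the exact sequence
\[
0\to M^{b,c}_f\to M^{a,c}_f\to M^{a,b}_f\to 0
\]
of relative holonomic modules. Multiplicities at generic points of components of dimension $n+r-1$ are lengths of $\gr^{\mathrm{rel}}$, which are additive. Indeed, a good filtration on the middle term induces good filtrations on the sub and the quotient, the resulting $\gr$ sequence is exact, and any discrepancy in filtrations only affects lower-dimensional cycles. This gives the cycle identity. For the zero loci: a component $\Lambda\times S$ with $\dim=n+r-1$ has $\dim S=r-1$ because $\Lambda$ is Lagrangian. Lemma \ref{lem:Z=supp} then shows that $Z_{r-1}$ is exactly the union of the $S$ occurring in $\CCrel_{n+r-1}$. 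Since all multiplicities are positive, no cancellation occurs in the sum, so the union formula follows.
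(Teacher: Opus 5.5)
The paper gives no proof of this statement; it quotes it from \cite{Wu20}. So I can only judge your outline on its own terms. Parts (1) and (3) are sound in substance. However, the lower bound $\dim\Chrel(M^{a,b}_f)\geq n+r-1$ in (2) is not established.

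\emph{The gap in (2).} No ``Ext-grade consideration'' of the kind you invoke exists. Suppose $P=\sD_X[s]f^{s-a}$ and $Q=\sD_X[s]f^{s-b}$ are $n$-pure, even with $\CCrel(P)=\CCrel(Q)$. This gives no upper bound on the grade of $P/Q$. For instance, take $r\geq 2$, $P=\sO_X[s]$ and $Q=(s_1,s_2)\sO_X[s]$. Both are torsion-free and $n$-pure, with relative cycle $[T^*_XX\times\C^r]$. Yet $P/Q\simeq\sO_X\otimes_\C\C[s]/(s_1,s_2)$ has grade $n+2$; this is the analogue of $\mathfrak m\subset A$ for a regular local ring. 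If $j(P/Q)\geq n+2$, the long exact $\mathrm{Ext}$ sequence only says that $\mathrm{Ext}^n(P)\to\mathrm{Ext}^n(Q)$ is an isomorphism and $\mathrm{Ext}^{n+1}(P)\to\mathrm{Ext}^{n+1}(Q)$ is injective, which is no contradiction. A homological argument would need something like Cohen--Macaulayness of $\sD_X[s]f^s$, which is not known for general $f$.

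\emph{How to fix it.} The missing input is local and specific to $f$.
\begin{enumerate}
\item If $M^{a,b}_f\neq 0$, some $f_i$ with $a_i>b_i$ vanishes somewhere. Otherwise $f^{a-b}$ is a unit and $\sD_X[s]f^{s-b}=\sD_X[s]f^{s-a}$.
\item At a general point $x$ of a component of $(f_i=0)$, write $f_j=u_jg^{m_j}$ with units $u_j$, a coordinate $g=x_1$, and $m_i\geq 1$.
\item Conjugate by $\prod_ju_j^{s_j-a_j}$, as in the proof of Lemma \ref{lem:unit-factors}. This identifies $(M^{a,b}_f)_x$ with $\sD[s]g^{t}/\sD[s]g^{t+N}$, where $t=\sum_jm_j(s_j-a_j)$ and $N=\sum_jm_j(a_j-b_j)>0$.
\item That module is an iterated extension of the nonzero modules $\big(\sD/\sD(x_1,\partial_{x_2},\dots,\partial_{x_n})\big)\otimes_\C\C[s]/(t+k)$, $1\leq k\leq N$.
\item Hence $Z(B^{a,b}_f)\supseteq Z(B^{a,b}_{f,x})$ contains the hypersurface $(t+1=0)$. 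Lemma \ref{lem:Z=supp} then gives a component $\Lambda\times S$ with $\dim S=r-1$.
\end{enumerate}
Your part (3) uses only the upper bound, so it is unaffected.

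\emph{Two points in (1) need tightening.} First, containment of $\Chrel(\sD_X[s]f^s)$ in some $\Lambda\times\C^r$ does not by itself make the components products. Your purity idea repairs this, and it does not need relative holonomicity as input. Any nonzero submodule $N$ is $\C[s]$-torsion-free, so $N\otimes_{\C[s]}\C(s)\neq 0$. Bernstein's inequality over $\C(s)$ then gives $N$ a component of dimension $\geq n+r$. Hence the module is $n$-pure, and Gabber's equidimensionality theorem forces every component to equal some $\Lambda_k\times\C^r$.

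Second, generic flatness identifies the fibre cycle at $\alpha$ with $\CC\big(\sD_X[s]f^s\otimes_{\C[s]}\C[s]/\mathfrak m_\alpha\big)$, not with $\CC(\sD_Xf^\alpha)$. To reach $j_*(\sO_Uf^\alpha)$ you need both the $\mathrm{Tor}_1$ and the tensor product of $j_*(\sO_U[s]f^s)/\sD_X[s]f^s$ with $\C[s]/\mathfrak m_\alpha$ to vanish. This holds for very general $\alpha$: the quotient is the union of the modules $M^{k\one_r,0}_f$, each killed by a nonzero polynomial by Sabbah's theorem. With these additions, and with $\CC(j_*\mathcal L_\alpha)=\CC(j_*\sO_U)$ justified by Kashiwara's index theorem (rank-one local systems give the same stalk Euler characteristics), your route to (1) works.
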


Since $j_*\sO_U$ is a regular holonomic $\sD_X$-module whose de Rham complex is constructible with respect to any Whitney stratification of $X$ adapted to $\bigcup_i(f_i=0)$, part (1) shows that, when $f$ is a complete factorization of a hyperplane arrangement $D$, the relative characteristic varieties of all the modules $\sD_X[s]f^{s+a}$ and $M^{a,b}_f$ are contained in $\bigcup_{W\in L(D)}T^*_WX\times\C^r$, where $T^*_WX$ denotes the conormal bundle of the edge $W$.

\begin{theorem}[{\cite[Theorem 1.5]{Wu20}}]\label{thm:Wu-dense}
Let $f$ be a complete factorization of a central hyperplane arrangement $D$ and let $W$ be a dense edge of $D$. For $l\in\Z$ and $k\gg l$ put $k_r=(k,\dots,k)\in\Z^r$. Then
\[
T^*_WX\times\Big(\sum_{j\in J(W,f)}s_j+l=0\Big)
\]
is a component of $\Chrel(M^{k_r,-k_r}_f)$, and its multiplicity in $\CCrel(M^{k_r,-k_r}_f)$ is
\[
(-1)^{\rank(W)-1}\chi\Big(\PP(X/W)\setminus\bigcup_{H\in D^W}\PP(H)\Big)>0 ,
\]
where $\chi$ denotes the topological Euler characteristic.
\end{theorem}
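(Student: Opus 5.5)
The statement immediately above is Theorem \ref{thm:Wu-dense}, quoted from \cite[Theorem 1.5]{Wu20}; the plan below reconstructs an argument for it.

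\emph{Step 1: localize at the dense edge.} The strategy is to reduce to the essential irreducible case at $W$ and then compute the multiplicity through a specialization argument. Near a generic point of $W$, the germ of $D$ is $D_W$, which is the pullback of $D^W$ under $X\to X/W$. The factors $f_j$ with $j\notin J(W,f)$ are units there, so they can be absorbed. Shifting the $s_j$ for these $j$ does not change the local module up to a twist. Hence the multiplicity of $T^*_WX\times(\sum_{J(W,f)}s_j+l=0)$ equals the multiplicity of $T^*_0(X/W)\times(\cdots)$ in the analogous module for $f_W$ on $X/W$, times the trivial factor $\C^{n-\rank W}$. We may therefore assume $D$ is essential and irreducible, $W=\{0\}$, and $r=|J(W,f)|$.

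\emph{Step 2: show the component occurs and compute $\CCrel$.} By additivity (Theorem \ref{thm:Wu-basic}(3)), we have
\[
\CCrel_{n+r-1}(M^{k_r,-k_r}_f)=\CCrel_{n+r-1}(\sD_X[s]f^{s-k_r})-\CCrel_{n+r-1}(\sD_X[s]f^{s+k_r})
\]
in a suitable Grothendieck-group sense. The right-hand side cannot be read off directly, since both terms equal $\CC(j_*\sO_U)\times\C^r$ in top dimension. Instead I would use a specialization argument. Restrict to a generic line $s=\alpha+t\,\one_r$ in $\C^r$, or generically along the hyperplane $\sum s_j+l=0$. On such a line the quotient becomes a $\sD_X[t]$-module whose fiber at generic $t$ is a difference of holonomic modules. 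The multiplicity along $T^*_0X$ then equals the jump in the Euler characteristic of the de Rham complexes of $\sD_Xf^{\beta-k}$ and $\sD_Xf^{\beta+k}$ at the origin. This uses the fact that for $k\gg0$ the module $\sD_X f^{s-k_r}$ specializes to $j_*$ and $\sD_Xf^{s+k_r}$ specializes to $j_!$ of the local system $\sL_\beta$ on $U$.

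\emph{Step 3: evaluate the multiplicity topologically.} The multiplicity of $T^*_0X$ in $\CC(j_*\sL)-\CC(j_!\sL)$ is computed by Kashiwara's index theorem as a difference of local Euler obstructions and stalk Euler characteristics at $0$. The stalk of $j_!$ vanishes. The stalk of $Rj_*\sL$ at $0$ has Euler characteristic $\chi(B_\epsilon\cap U,\sL)$. Since $U$ is a $\C^*$-bundle over $\PP(X)\setminus\bigcup\PP(H)$ and $\sL_\beta$ is monodromy-nontrivial along the fiber for $\beta$ off the hyperplane, this quantity is $0$. On the hyperplane $\sum s_j+l=0$ the monodromy becomes trivial, and the Euler characteristic equals $\chi(\PP(X)\setminus\bigcup\PP(H))$ times a sign $(-1)^{n-1}$. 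Nonvanishing of this Euler characteristic for irreducible arrangements (Crapo's beta invariant, $\beta(D)>0$ iff $D$ is irreducible) gives positivity and shows the component really occurs.

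\emph{Main obstacle.} The main difficulty is Step 2: justifying that the generic relative multiplicity along $\Gamma$ equals the holonomic multiplicity computed on a general specialization. This requires flatness of $\gr^{\mathrm{rel}}$ over the generic point of $(\sum s_j+l=0)$, together with the identification of the specialized modules with $j_*\sL$ and $j_!\sL$ for $k\gg l$. I would handle this by localizing $\C[s]$ at the prime $(\sum s_j+l)$ and working over the residue field. Relative Kashiwara equivalence makes the module point-supported, so its multiplicity is a rank over the field that can be compared to the specialized computation.
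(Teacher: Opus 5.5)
The paper does not prove this statement; it quotes it from \cite{Wu20}, so I judge your argument on its own. Your Step 1 (reduction to the essential irreducible arrangement $D^W$ at a general point of $W$) is fine, and so is the positivity via Crapo's $\beta$-invariant. The multiplicity computation in Steps 2--3, however, fails.

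The decisive error is in Step 3. For \emph{every} rank-one local system $\sL$ on $B_\epsilon\cap U\simeq\C^*\times U(D)$ one has $\chi(B_\epsilon\cap U,\sL)=\chi(\C^*)\,\chi(U(D))=0$, on the hyperplane $(\sum_js_j+l=0)$ just as off it. Trivial monodromy along the $\C^*$-fiber makes the cohomology nonzero, but it sits in two adjacent degrees: $H^{n-1}$ and $H^n$, both isomorphic to $H^{n-1}(U(D),\sL')$ for $\beta$ general on the hyperplane. These cancel in the Euler characteristic. Since every local complement of $D$ has Euler characteristic zero, $Rj_*\sL_\beta$ and $j_!\sL_\beta$ have the same stalk Euler characteristics everywhere. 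Hence $\CC(j_*\sL_\beta)=\CC(j_!\sL_\beta)$ for every $\beta$, and the index-theorem computation you describe returns $0$.

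The same phenomenon already undermines Step 2. The displayed subtraction is not Theorem \ref{thm:Wu-basic}(3), and its right-hand side is zero because $\Chrel(\sD_X[s]f^{s\pm k_r})$ is pure of dimension $n+r$. More generally, any K-theoretic invariant of a (derived) specialization at a point of the hyperplane vanishes on a module that is torsion along it, whereas $m_\Gamma$ is a \emph{length} at the generic point of the hyperplane. Note also that at such a point $\beta$ the submodule $\sD_Xf^{\beta+k}\subseteq j_*\sO_Uf^\beta$ cannot be $j_!\sL_\beta$. There the map $j_!\sL_\beta\to j_*\sL_\beta$ has a nonzero kernel supported at the origin, so at best the specialization is its image $j_{!*}\sL_\beta$.

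What is missing is a computation that sees lengths. One route is over the discrete valuation ring $R=\C[s]_{\mathfrak q}$, $\mathfrak q=(\sum_js_j+l)$, in an analytic version since $e^{2\pi i\sum_js_j}$ is not algebraic. Write $(M^{k_r,-k_r}_f)_{\mathfrak q}\simeq\Delta\otimes V$ and compute $\operatorname{length}_RV$ from the de Rham stalk at $0$. This needs three things:
\begin{enumerate}
\item Theorem \ref{thm:Wu-jstar};
\item a relative $j_!$-statement for $\sD_X[s]_{\mathfrak q}f^{s+k_r}$, namely that its de Rham stalk at $0$ vanishes, which you would have to prove (e.g.\ by relative duality);
\item a comparison with $R\Gamma(\C^*\times U(D),\sL_R)$.
\end{enumerate}
With these, the $\C^*$-factor contributes $R/(e^{2\pi i\sum_js_j}-1)\simeq\K$ in degree one: length one, not Euler characteristic zero. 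So the stalk is $R\Gamma(U(D),\sL'_\K)[-1]$. Since the de Rham stalk of $\Delta\otimes V$ is concentrated in degree $n$, this forces $V\simeq H^{n-1}(U(D),\sL'_\K)$, of dimension $(-1)^{n-1}\chi(U(D))$.

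Alternatively, you can work on a general fiber, comparing $j_*\sL_\beta$ with $j_{!*}\sL_\beta$ and using generic vanishing on $U(D)$. But then you must still prove that the relative multiplicity equals the dimension seen on that fiber. That is exactly the flatness issue you list as the main obstacle and leave unresolved.
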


\begin{theorem}[{\cite[Theorem 3.12]{Wu20}}]\label{thm:Wu-jstar}
For every prime ideal $\mathfrak q\subseteq\C[s]$ one has $j_*(\sO_U[s]f^s)_{\mathfrak q}=\sD_X[s]_{\mathfrak q}f^{s-k_r}$ for all $k\gg 0$. In particular $j_*(\sO_U[s]f^s)_{\mathfrak q}$ is a coherent $\sD_X[s]_{\mathfrak q}$-module.
\end{theorem}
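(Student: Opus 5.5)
The plan is to write $j_*(\sO_U[s]f^s)$ as the increasing union of the cyclic modules $\sD_X[s]f^{s-k_r}$ and to show that this chain becomes stationary after localization at $\mathfrak q$.

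Every local section of $j_*(\sO_U[s]f^s)$ has the form $g(s)\,h\,(f_1\cdots f_r)^{-k}f^s$ with $h$ regular, that is, $g(s)h\,f^{s-k_r}$. Hence
\[
j_*(\sO_U[s]f^s)=\bigcup_{k\geq 0}\sD_X[s]f^{s-k_r},
\]
and the union is increasing because $f^{s-k_r}=f^{\one_r}f^{s-(k+1)_r}$. Localization is exact and commutes with filtered unions, so $j_*(\sO_U[s]f^s)_{\mathfrak q}=\bigcup_k\sD_X[s]_{\mathfrak q}f^{s-k_r}$. It therefore suffices to show that the successive quotients $(M^{(k+1)_r,k_r}_f)_{\mathfrak q}$ vanish for all $k\geq k_0$.

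By translation, $B^{(k+1)_r,k_r}_f=\tau_{(k+1)_r}(B^{0,-\one_r}_f)=\tau_{(k+1)_r}(B_f)$. Locally on $X$ (on a relatively compact open set in the analytic case, or globally in the algebraic case by quasi-compactness), the theorem of Sabbah and Gyoja provides a nonzero element of $B_f$ of the form $b(s)=\prod_{t=1}^N(L_t(s)+c_t)$. Here $L_t(s)=\sum_i c_{t,i}s_i$ with $c_{t,i}\in\N$ not all zero, and $c_t>0$. Then
\[
\tau_{(k+1)_r}(b)=\prod_t\big(L_t(s)+c_t-(k+1)|L_t|\big)\in B^{(k+1)_r,k_r}_f,
\qquad |L_t|:=\sum_i c_{t,i}>0 .
\]
If this product does not lie in $\mathfrak q$, it becomes a unit in $\C[s]_{\mathfrak q}$ and kills $(M^{(k+1)_r,k_r}_f)_{\mathfrak q}$, so the quotient vanishes.

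To control when the product lies in $\mathfrak q$, use primality. The product lies in $\mathfrak q$ only if some factor $L_t(s)+c_t-(k+1)|L_t|$ does. Fix $t$. The constants $c_t-(k+1)|L_t|$ are pairwise distinct for distinct $k$. If $\mathfrak q$ contained $L_t+\gamma$ and $L_t+\gamma'$ with $\gamma\neq\gamma'$, it would contain $\gamma-\gamma'\neq 0$, which is a contradiction. So for each $t$ there is at most one $k$ with that factor in $\mathfrak q$. Since there are finitely many $t$, there exists $k_0$ such that $\tau_{(k+1)_r}(b)\notin\mathfrak q$ for all $k\geq k_0$. Consequently $\sD_X[s]_{\mathfrak q}f^{s-k_r}=\sD_X[s]_{\mathfrak q}f^{s-(k+1)_r}$ for $k\geq k_0$, the union equals $\sD_X[s]_{\mathfrak q}f^{s-k_r}$, and coherence follows because this module is cyclic.

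The main point needing care is uniformity of $k_0$. In the algebraic setting a single global $b$ suffices. In the analytic setting the statement should be read locally (stalkwise, or on relatively compact opens), where Sabbah's $b$ exists and the argument above applies verbatim. The primality argument is the key step, and it is where the positivity $|L_t|>0$ from Sabbah–Gyoja is essential.
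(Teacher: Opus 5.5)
Your proposal is correct. The paper does not reprove this statement; it quotes it from Wu's earlier work, where it is listed right after Sabbah's theorem (Theorem~\ref{thm:Sabbah}). Your argument is the natural derivation from that theorem, so there is no separate proof in the paper to compare against. You exhaust $j_*(\sO_U[s]f^s)$ by the chain $\sD_X[s]f^{s-k_r}$ and identify each successive quotient with a translate of $M^{0,-\one_r}_f$. You then translate a product-form element of $B_f$ along $\one_r$. The only property you really use is $L_t\cdot\one_r>0$ for each factor, which makes every factor leave the prime $\mathfrak q$ after finitely many steps; the signs of the constants $c_t$ play no role.
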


\begin{theorem}[{Sabbah \cite{Sab87}; see \cite[Theorem 3.4]{Wu20}}]\label{thm:Sabbah}
For every $r$-tuple $f$ (of regular functions, or of germs of holomorphic functions at a point), the ideal $B_f$ contains a nonzero polynomial which is a product of linear forms $L\cdot s+\alpha$ with $L\in\N^r$ and $\alpha\in\Q$.
\end{theorem}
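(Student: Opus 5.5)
The plan is to follow Sabbah's strategy through the graph embedding and multi-indexed $V$-filtrations. First, rationality is reduced to the one-variable case at the very end. Let $i_f\colon X\to X\times\C^r$ be the graph embedding, with coordinates $t=(t_1,\dots,t_r)$ on $\C^r$, and let $\sN=\sD_{X\times\C^r}\,f^s\delta(t-f)$. Under the standard identification, $s_i$ acts as $-\partial_{t_i}t_i$, and $f^{s+\one_r}$ corresponds to $t_1\cdots t_r\cdot f^s\delta(t-f)$. Thus an element of $B_f$ is the same as a polynomial $b(s)$ with
\[
b(-\partial_t t)\,f^s\delta = P\cdot t_1\cdots t_r\,f^s\delta
\]
for some $P$ of $V$-degree $\ge 0$ in the $t$-variables.

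\textbf{Step 1 (filtrations indexed by weight vectors).} For each $L\in\N^r\setminus\{0\}$, I would take the $V^L$-filtration on $\sD_{X\times\C^r}$ given by the weight $L$ on the $t_i$ and $-L$ on the $\partial_{t_i}$. Kashiwara's argument for the existence of $b$-functions along a smooth hypersurface generalizes to this weighted setting, because $\sN$ is holonomic and the graded pieces behave well. It gives a nonzero one-variable polynomial $b_L$ with
\[
b_L(L\cdot s)\,f^s\delta \in V^L_{>0}\,\sN\cap\big(\text{submodule generated by }t^{m}f^s\delta\big).
\]
Here the subscript $>0$ means we have gone up one step in the $L$-grading. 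By the Kashiwara--Malgrange rationality theorem, or Hironaka resolution plus monodromy, the roots of $b_L$ are rational.

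\textbf{Step 2 (finiteness of the fan).} This is the main obstacle. One must show that finitely many $L$ suffice. The idea, following Sabbah, is that the good $V^L$-filtrations can be chosen locally constant in $L$ on the open cones of a finite rational polyhedral fan subdividing $\mathbb{R}_{\ge0}^r$. This can be reached, for instance, via the Gröbner fan of the annihilator ideal of $f^s\delta$ with respect to the $V$-weights, which is finite by a Gröbner-fan finiteness argument in the Weyl algebra, locally on $X$.

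\textbf{Step 3 (assembling the product).} On each maximal cone $\sigma$, I would use its primitive ray generators $L_1,\dots,L_k$. Composing the operators from Step 1 moves $f^s\delta$ into $t^{\one_r}\cdot\sN$, so $\prod_j b_{L_j}(L_j\cdot s)$ kills the part of $\sD f^s/\sD f^{s+\one_r}$ "seen" by $\sigma$. Taking the product over all cones of the fan then kills $M^{-\one_r}_f$. This gives a nonzero element of $B_f$ that is a product of linear forms $L\cdot s+\alpha$ with $L\in\N^r$ and $\alpha\in\Q$.

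In the analytic case one works with germs and uses compactness to keep the fan finite. As a consistency check, nonvanishing of the ideal alone also follows from relative holonomicity and Lemma~\ref{lem:Z=supp}. The content of Sabbah's theorem is the shape of the product, which is exactly what Step 2 provides.
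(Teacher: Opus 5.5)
The paper does not prove this statement; it quotes it from \cite{Sab87}. The benchmark is therefore Sabbah's argument. Your outline has its shape, but Step~3, which has to carry the whole content, does not work as written.

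Put $m=f^s\delta$ and $U_\alpha:=V_\alpha\sD_{X\times\C^r}\cdot m$ for the $\Z^r$-indexed $V$-multifiltration. What you need is $b(s)\,m\in U_{-\one_r}=t_1\cdots t_r\,U_0=\sD_X[s]f^{s+\one_r}$. Landing in $t^{\one_r}\sN$, which contains every $t^{\one_r}\partial_t^\beta m$, is much weaker.

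What a single weight $L$ really gives, after iterating with shifts, is $\prod_{k=0}^{L\cdot\one_r-1}b_L(L\cdot s+k)\,m\in V^L_{-L\cdot\one_r}\sD_{X\times\C^r}\cdot m=\sum_{L\cdot\alpha\le -L\cdot\one_r}U_\alpha$. These submodules are $\C[s]$-stable, so a product over finitely many rays puts $b(s)m$ into $\bigcap_L\sum_{L\cdot\alpha\le-L\cdot\one_r}U_\alpha$. Once the $e_i$ are among the rays, the index sets intersect to $\{\alpha\le-\one_r\}$, but the submodules need not. The reason is that $\alpha\mapsto U_\alpha$ is the image of a monomial filtration of $\sD_{X\times\C^r}$ modulo $\Ann m$, and intersections do not commute with passing to that quotient.

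Step~3 never uses the local constancy asserted in Step~2, and ``the part seen by $\sigma$'' is never defined. So the same reasoning applies verbatim to the coordinate fan, where its conclusion is false. Take $f=(x,x)$. Then $\partial_xt_1m=(s_1+s_2+1)m$ and $t_1\partial_{t_2}m=-s_2m$. Hence $(s_1+1)m\in V^{e_1}_{-1}\sD_{X\times\C^2}\cdot m$, and symmetrically $(s_2+1)m\in V^{e_2}_{-1}\sD_{X\times\C^2}\cdot m$. Your assembly on the single cone $\mathbb{R}^2_{\geq 0}$ would then give $(s_1+1)(s_2+1)\in B_f$. But $B_f=\big((s_1+s_2+1)(s_1+s_2+2)\big)$ contains no product $b_1(s_1)b_2(s_2)$ of nonzero one-variable polynomials.

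\textbf{What is missing.} The missing step is the statement that makes the fan useful, and proving it is the real content of Sabbah's theorem. On each cone $\sigma$ of a suitable fan, the filtrations attached to the rays of $\sigma$ must come from a single good $V^\sigma$-multifiltration of $\sN$. Each ray of $\sigma$ must have a Bernstein polynomial acting on the corresponding graded pieces. One must then compare these multifiltrations with $\alpha\mapsto U_\alpha$ so that the final element lands in $U_{-\one_r}$ itself. Constancy of initial ideals on the cones of a Gr\"obner fan is at best a starting point for this, so Step~2 is not where the difficulty lies.

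\textbf{Rationality.} Separately, the rationality claim in Step~1 is not justified. The Kashiwara--Malgrange theorem concerns the $V$-filtration along a smooth hypersurface. Your $b_L$ is instead a weighted Bernstein polynomial along $\{t_i=0\text{ for }L_i>0\}$, which in general has higher codimension.

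\textbf{An alternative route.} A route that settles both points at once is essentially Gyoja's \cite{Gyo93}. On an embedded resolution of $\prod_jf_j$, the tuple is locally a monomial times units. Its Bernstein--Sato ideal then contains an explicit product of forms $\sum_ia_is_i+c$ with $a_i\in\N$ and $c\in\Z_{>0}$. Kashiwara's direct-image argument transfers a product of integer shifts of this into $B_f$.
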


\subsection{Relative Kashiwara's equivalence}\label{sec:kashiwara}
Let $X=\C^n$ with coordinates $x_1,\dots,x_n$ and let $\sD_X$ be the Weyl algebra. We write
\[
\Delta:=\sD_X/\sD_X(x_1,\dots,x_n)
\]
for the $\delta$-module at the origin, with cyclic vector $\delta_0$ (the class of $1$); the elements $\partial^\beta\delta_0=\partial_{x_1}^{\beta_1}\cdots\partial_{x_n}^{\beta_n}\delta_0$, $\beta\in\N^n$, form a $\C$-basis of $\Delta$, and
\begin{equation}\label{eq:delta-rules}
x_\alpha\,\partial^\beta\delta_0=-\beta_\alpha\,\partial^{\beta-e_\alpha}\delta_0,\qquad \partial_{x_\alpha}\,\partial^\beta\delta_0=\partial^{\beta+e_\alpha}\delta_0 .
\end{equation}
For a commutative $\C$-algebra $R$ we put $\Delta_R=\Delta\otimes_\C R$; when $R=\K$ is a field we also write $\delta_{0,\K}$ for $\Delta_\K$. The following elementary form of Kashiwara's equivalence \cite[Theorem 1.6.1]{HTT08} with coefficients will be used in Sections \ref{sec:local} and \ref{sec:counter}.

\begin{lemma}\label{lem:kashiwara}
Let $R$ be a commutative $\C$-algebra and let $N$ be a left $\sD_X\otimes_\C R$-module such that every element of $N$ is annihilated by a power of the ideal $(x_1,\dots,x_n)$. Let $V=\{v\in N\mid x_\alpha v=0\text{ for }\alpha=1,\dots,n\}$, an $R$-submodule of $N$.
\begin{enumerate}
\item The map $\Delta\otimes_\C V\to N$, $P\delta_0\otimes v\mapsto Pv$, is an isomorphism of $\sD_X\otimes_\C R$-modules. In particular $\Ann_R N=\Ann_R V$.
\item For every $R$-module $V'$, every $\sD_X\otimes_\C R$-linear map $\Psi\colon\Delta\otimes_\C V\to\Delta\otimes_\C V'$ is of the form $\mathrm{id}_\Delta\otimes\phi$ for a unique $R$-linear map $\phi\colon V\to V'$.
\item If $R=\K$ is a field and $N$ is finitely generated over $\sD_X\otimes_\C\K$, then $\dim_\K V<\infty$, $\CC(N)=(\dim_\K V)\,[T^*_{\{0\}}X]$, and $\dim_\K V=\dim_\K\Hom_{\sD_X\otimes\K}(N,\delta_{0,\K})$.
\end{enumerate}
\end{lemma}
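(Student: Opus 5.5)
The argument is the standard Kashiwara equivalence for the point $\{0\}\subseteq\C^n$, carried out by hand with $R$ as inert coefficients, since $R$ is central and commutes with $\sD_X$.

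\emph{Part (1).} The map $\Delta\otimes_\C V\to N$, $P\delta_0\otimes v\mapsto Pv$, is well defined because $\sD_X(x_1,\dots,x_n)$ kills $V$. It is visibly $\sD_X\otimes R$-linear.

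For surjectivity, I use the Euler operator $\theta=\sum_\alpha x_\alpha\partial_{x_\alpha}$. For $v\in N$ with $\mathfrak m^{k}v=0$, where $\mathfrak m=(x_1,\dots,x_n)$, I induct on $k$.
\begin{itemize}
\item Each $x_\alpha v$ is killed by $\mathfrak m^{k-1}$, so by induction it lies in the image.
\item One has $\theta v=\sum_\alpha x_\alpha\partial_\alpha v$. The standard trick is that on $\partial^\beta V$ the operator $\theta$ acts by $-(n+|\beta|)$.
\item I decompose $N_k:=\{v:\mathfrak m^k v=0\}$ into generalized $\theta$-eigenspaces. Since $\theta$ maps $N_k$ to itself, one shows by induction that $\prod_{j=n}^{n+k-1}(\theta+j)$ kills $N_k$.
\item Cleaner: write $v=\sum_\beta \partial^\beta v_\beta$ by solving successively. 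The key identity is $x_\alpha\partial^\beta w=\partial^\beta x_\alpha w-\beta_\alpha\partial^{\beta-e_\alpha}w$. This lets one define the projection to $V$ as $\pi=\prod_{j=1}^{k-1}\bigl(1+\theta/(n+j-1)\bigr)$, or rather the usual formula $\pi(v)=\sum_{\beta}\frac{(-1)^{|\beta|}}{\beta!}\partial^\beta x^\beta v$, which is a finite sum on $N_k$.
\end{itemize}
One checks $x_\alpha\pi(v)=0$ and $v=\sum_\beta \frac{1}{\beta!}\partial^\beta\pi(x^\beta v)$; this is exactly the proof in \cite[Theorem 1.6.1]{HTT08}. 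Its denominators are integers, so it works verbatim over any $\C$-algebra $R$.

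For injectivity, suppose $\sum_\beta\partial^\beta v_\beta=0$ with $v_\beta\in V$. Take $\beta$ maximal in degree and apply $x^\beta$. Using \eqref{eq:delta-rules}, this yields $(-1)^{|\beta|}\beta!\,v_\beta=0$, hence $v_\beta=0$, and I conclude by induction.

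The consequence $\Ann_RN=\Ann_RV$ follows because $\Delta$ is free over $\C$ with basis $\partial^\beta\delta_0$, so $\Delta\otimes V\simeq\bigoplus_\beta V$ as $R$-modules.

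\emph{Part (2).} $\Psi$ maps the $x$-kernel of $\Delta\otimes V$ to that of $\Delta\otimes V'$. By the injectivity computation, the $x$-kernel is $\delta_0\otimes V$, since any element with a term $\partial^\beta$, $|\beta|>0$, is not killed by the appropriate $x_\alpha$. So $\Psi(\delta_0\otimes v)=\delta_0\otimes\phi(v)$ for a unique $R$-linear $\phi$. Because $\Delta\otimes V$ is generated over $\sD_X$ by $\delta_0\otimes V$, we get $\Psi=\mathrm{id}\otimes\phi$, and uniqueness is clear.

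\emph{Part (3).} If $N$ is finitely generated, then by (1) $N\simeq\Delta_\K\otimes_\K V$. A finite generating set involves only finitely many elements of $V$, and the $\sD$-span of $\delta_0\otimes V_0$ is $\Delta\otimes V_0$. Hence $V=V_0$ is finite-dimensional.

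Then $N\simeq\delta_{0,\K}^{\oplus\dim V}$. The characteristic cycle is additive, and $\CC(\delta_{0,\K})=[T^*_{\{0\}}X]$ using the filtration by order of $\partial$, whose graded module is $\K[\xi]$ supported on $T^*_0X$ with multiplicity $1$.

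Finally, (2) gives $\Hom(N,\delta_{0,\K})=\Hom_\K(V,\K)$, which has dimension $\dim V$.

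The only real care needed is the explicit projection formula in (1) over a general $R$, which is the main step.
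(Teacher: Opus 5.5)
Your proof is correct. Parts (2) and (3), the injectivity half of (1) (apply $x^\beta$ for $\beta$ of maximal length), and the consequence $\Ann_RN=\Ann_RV$ are argued as in the paper.

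The real difference is surjectivity in (1).
\begin{itemize}
\item The paper proves a one-variable statement for $\sD_\C\otimes_\C A$, where $A$ is any coefficient ring commuting with $x,\partial$ and $x$ is locally nilpotent. The proof is by induction on the order of nilpotency, using the correction $w'=w+\sum_{k<m}\frac{1}{k+1}\partial^{k+1}v_k$. It then applies this to $x_1$ with the noncommutative ring $A=\sD_{\C^{n-1}}\otimes_\C R$ and inducts on $n$.
\item You handle all variables at once with the explicit projector $\pi=\sum_\beta\frac{(-1)^{|\beta|}}{\beta!}\partial^\beta x^\beta$ onto $V$ and the reconstruction formula $v=\sum_\beta\frac{1}{\beta!}\partial^\beta\pi(x^\beta v)$.
\end{itemize}
Your route gives the inverse of $\Delta\otimes_\C V\to N$ in closed form, $v\mapsto\sum_\beta\frac{1}{\beta!}\partial^\beta\delta_0\otimes\pi(x^\beta v)$. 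This makes naturality and $R$-linearity visible and needs no induction on $n$. The paper's route needs nothing beyond $x\partial^kv=-k\partial^{k-1}v$ for $v\in\ker x$, at the price of the induction on dimension.

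Two remarks on the write-up.

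First, the two identities for $\pi$ are the entire content of your surjectivity argument. They should be verified, not attributed: the cited proof in \cite{HTT08} is organized around eigenspaces of the Euler operator, not this formula. The check is short:
\begin{itemize}
\item $\pi$ is the product of the commuting one-variable operators $\pi_\alpha=\sum_k\frac{(-1)^k}{k!}\partial_\alpha^kx_\alpha^k$.
\item The relation $x_\alpha\partial_\alpha^k=\partial_\alpha^kx_\alpha-k\partial_\alpha^{k-1}$ makes $x_\alpha\pi_\alpha$ telescope to zero.
\item The reconstruction formula reduces to $\sum_{\beta+\gamma=\mu}\frac{(-1)^{|\gamma|}}{\beta!\,\gamma!}=\prod_\alpha\frac{(1-1)^{\mu_\alpha}}{\mu_\alpha!}=0$ for $\mu\neq 0$.
\end{itemize}

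Second, delete the exploratory material. The induction in your first bullet is never finished. The formula $\prod_{j=1}^{k-1}\bigl(1+\theta/(n+j-1)\bigr)$ is wrong: for $k\geq 2$ its factor $1+\theta/n$ kills $V$ (where $\theta=-n$) instead of fixing it.
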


\begin{proof}
(1) Let $A$ be any ring and let $N$ be a module over $\sD_{\C}\otimes_\C A$, where $\sD_\C=\C\langle x,\partial\rangle$ is the first Weyl algebra and $A$ commutes with $x$ and $\partial$, such that $x$ acts locally nilpotently on $N$. We claim that $\C[\partial]\otimes_\C\ker(x)\to N$, $\partial^k\otimes v\mapsto\partial^kv$, is bijective. Since $x\partial^kv=-k\partial^{k-1}v$ for $v\in\ker(x)$, an identity $\sum_{k\leq m}\partial^kv_k=0$ with $v_m\neq 0$ gives, after applying $x^m$, $(-1)^m m!\,v_m=0$, which is absurd; so the map is injective. For surjectivity we show by induction on $m$ that every $w$ with $x^{m+1}w=0$ lies in the image. If $m=0$ then $w\in\ker(x)$. Otherwise $xw$ is killed by $x^m$, so $xw=\sum_{k<m}\partial^kv_k$ with $v_k\in\ker(x)$, and $w'=w+\sum_{k<m}\frac{1}{k+1}\partial^{k+1}v_k$ satisfies $xw'=xw-\sum_{k<m}\partial^kv_k=0$; hence $w\in\ker(x)+\im$. Applying the claim to $x_1$ (with $A=\sD_{\C^{n-1}}\otimes R$) we get $N\simeq\C[\partial_{x_1}]\otimes_\C\ker(x_1)$, and $\ker(x_1)$ is a $\sD_{\C^{n-1}}\otimes R$-module on which $x_2,\dots,x_n$ act locally nilpotently; by induction $\ker(x_1)\simeq\C[\partial_{x_2},\dots,\partial_{x_n}]\otimes_\C V$, which proves the first assertion. As an $R$-module, $\Delta\otimes_\C V$ is a direct sum of copies of $V$, so $\Ann_RN=\Ann_RV$.

(2) By \eqref{eq:delta-rules} and (1), the common kernel of $x_1,\dots,x_n$ on $\Delta\otimes_\C V'$ is $\delta_0\otimes V'$. For $v\in V$ the element $\delta_0\otimes v$ is annihilated by all $x_\alpha$, hence so is $\Psi(\delta_0\otimes v)$, and therefore $\Psi(\delta_0\otimes v)=\delta_0\otimes\phi(v)$ for a unique $\phi(v)\in V'$. The map $\phi$ is $R$-linear because $\Psi$ is, and $\Psi=\mathrm{id}_\Delta\otimes\phi$ because $\Delta\otimes_\C V$ is generated by $\delta_0\otimes V$ over $\sD_X$.

(3) If $N$ is generated by finitely many elements, these lie in $\Delta\otimes_\C V_0$ for a finite-dimensional subspace $V_0\subseteq V$, so $N=\Delta\otimes_\C V_0$ and $V=V_0$ by (1). Hence $N\simeq\delta_{0,\K}^{\oplus\dim V}$. The order filtration of $\delta_{0,\K}$ has $\gr\,\delta_{0,\K}=\K[\xi_1,\dots,\xi_n]$, which is supported on $T^*_{\{0\}}X$ with multiplicity one; thus $\CC(N)=(\dim V)[T^*_{\{0\}}X]$. The last assertion follows from (2).
\end{proof}

\subsection{Free arrangements}\label{sec:free}
Let $D$ be a central arrangement in $X=\C^n$ with complete factorization $f$ and let $S_X=\C[x_1,\dots,x_n]$. The module of logarithmic vector fields is
\[
\Der(-\log D)=\{\theta\in\Der_\C(S_X)\mid \theta(f_j)\in f_jS_X\ \text{for } j=1,\dots,r\};
\]
it is a graded $S_X$-module, where the degree of $\theta=\sum_\alpha p_\alpha\partial_{x_\alpha}$ with homogeneous $p_\alpha$ of the same degree $d$ is $d$. The arrangement $D$ is \emph{free} if $\Der(-\log D)$ is a free $S_X$-module (necessarily of rank $n$), and the degrees of a homogeneous basis $\theta_1,\dots,\theta_n$ form the multiset of \emph{exponents} $\exp(D)$. The Euler vector field $E=\sum_{\alpha=1}^n x_\alpha\partial_{x_\alpha}$ satisfies $E(f_j)=f_j$ for every $j$. We shall use the following standard facts.

\begin{theorem}\label{thm:free-facts}
Let $D$ be a central arrangement in $X=\C^n$ with complete factorization $f$.
\begin{enumerate}
\item (Saito's criterion \cite{Sai80}, \cite[Theorem 4.19]{OT92}) Homogeneous $\theta_1,\dots,\theta_n\in\Der(-\log D)$ form a basis of $\Der(-\log D)$ if and only if $\det(\theta_1,\dots,\theta_n)=c\,f_D$ for some $c\in\C^*$, where $\det(\theta_1,\dots,\theta_n)$ is the determinant of the coefficient matrix. In that case $\sum_a\deg\theta_a=r$.
\item (\cite[Proposition 4.28]{OT92}) If $D=D_1\times D_2$ is a product of arrangements in complementary subspaces, then $D$ is free if and only if $D_1$ and $D_2$ are free, and then $\exp(D)=\exp(D_1)\cup\exp(D_2)$, a basis being given by the union of bases of $\Der(-\log D_1)$ and $\Der(-\log D_2)$.
\item (\cite[Theorem 4.37]{OT92}) If $D$ is free, then for every edge $W\in L(D)$ the localization $D_W$, and hence the essential quotient $D^W$, is free.
\item (Terao's factorization theorem \cite[Theorem 4.137]{OT92}) Suppose that $D$ is free with exponents $(d_1,\dots,d_n)$, and let $M(D)=X\setminus\bigcup_j D_j$. Then the Poincar\'e polynomial of the complement is
\[
\pi(M(D),t)=\sum_{q\geq 0}\dim H^q(M(D),\C)\,t^q=\prod_{a=1}^n(1+d_at).
\]
\end{enumerate}
\end{theorem}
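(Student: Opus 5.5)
The statement collects four standard facts, so the plan is to give a short argument or reduction for each, citing \cite{OT92} where the argument is long, and to keep track of which parts are genuinely nontrivial.

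For (1) I would first prove necessity. If $\theta_1,\dots,\theta_n\in\Der(-\log D)$, then for each $j$ I change coordinates so that $f_j=x_1$. The first row of the coefficient matrix is then $(\theta_a(x_1))_a\in x_1S_X$, so $\det(\theta_1,\dots,\theta_n)$ is divisible by $f_j$. Since the $f_j$ are pairwise non-associate, the determinant is divisible by $f_D$; call this observation $(\ast)$. If the $\theta_a$ form a basis, then $E$ and the $n-1$ fields $x_\alpha\partial_{x_\beta}$-type logarithmic fields together with $f_D\partial_{x_\alpha}$ all lie in their span. In particular each $f_D\partial_{x_\alpha}$ does, so $f_D^n=\det(\theta)\cdot\det(\text{transition matrix})$. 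Combined with $(\ast)$ and homogeneity, this gives $\det(\theta)=c f_D$.

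For sufficiency, assume $\det(\theta)=cf_D$ and take $\theta\in\Der(-\log D)$. Cramer's rule writes $\theta=\sum_a(\det_a/\det)\theta_a$, where $\det_a$ is the determinant with $\theta_a$ replaced by $\theta$. By $(\ast)$ each $\det_a$ is divisible by $f_D$, so the coefficients are polynomials. Independence follows from $\det\neq0$. Comparing degrees in $\det(\theta)=cf_D$ gives $\sum_a\deg\theta_a=r$.

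For (2), coordinates can be split so that $S_X=S_1\otimes S_2$, and a derivation is logarithmic along $D$ iff its $V_1$- and $V_2$-components are logarithmic for $D_1$ and $D_2$ separately. This gives $\Der(-\log D)=(\Der(-\log D_1)\otimes S_2)\oplus(S_1\otimes\Der(-\log D_2))$. The direction ``$D_1,D_2$ free $\Rightarrow D$ free'' is then immediate, with the union of bases as a basis. For the converse, a graded direct summand of a free graded module over a polynomial ring is projective, hence free by graded Nakayama. Base changing along $S_2\to S_2/\mathfrak m_2$ recovers $\Der(-\log D_1)$ as such a summand. The statement about exponents is read off from the bases.

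For (3), pick a point $p\in W$ lying on no hyperplane other than those of $D_W$. Near $p$ the germ of $D$ equals the germ of $D_W$, so $\Der(-\log D_W)_{\mathfrak m_p}=\Der(-\log D)_{\mathfrak m_p}$, which is free. $D_W$ is invariant under translation along $W$, so $\Der(-\log D_W)$ is also free at the origin. As a graded module it is then free by graded Nakayama. Freeness of $D^W$ follows from (2), since $D_W=D^W\times(\text{empty arrangement on }W)$.

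For (4), I expect the main obstacle, and I would follow the Solomon--Terao route. The logarithmic forms $\Omega^p(\log D)$ are $\wedge^p$ of the $S_X$-dual of $\Der(-\log D)$. When $D$ is free, their Hilbert series are therefore explicit in the exponents. The Solomon--Terao formula expresses the characteristic polynomial $\chi(D,t)$ through these Hilbert series, and substituting gives $\chi(D,t)=\prod_a(t-d_a)$. The Orlik--Solomon/Brieskorn theorem then identifies $\pi(M(D),t)=(-t)^n\chi(D,-1/t)$, which yields $\prod_a(1+d_at)$. The delicate point is the Solomon--Terao identity itself. If pressed, I would instead cite it, or use Terao's addition--deletion induction, which in full generality again requires \cite[Theorem 4.137]{OT92}.
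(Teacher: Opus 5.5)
The paper does not prove this statement; all four items are quoted from \cite{Sai80} and \cite{OT92}, so your sketches are measured against the standard proofs. Parts (2) and (3) and the sufficiency half of (1) are correct as written. For (4), the Solomon--Terao route is the one behind \cite[Theorem 4.137]{OT92}, so deferring to the citation there matches what the paper does. One caveat: the identity $\Omega^p(\log D)=\wedge^p\Omega^1(\log D)$, with $\Omega^1(\log D)$ dual to $\Der(-\log D)$, is itself Saito's theorem for free divisors. It is not a general fact, so it should only be invoked once $D$ is assumed free.

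The genuine gap is in the necessity half of (1). From $f_D\partial_{x_\alpha}=\sum_a c_{\alpha a}\theta_a$ you only obtain $f_D^n=\det(\theta)\det(c_{\alpha a})$. Together with $(\ast)$, this says $\det(\theta)=c\prod_j f_j^{e_j}$ with $1\le e_j\le n$, and nothing forces $e_j=1$. Homogeneity does not help: $f_D^2$ is homogeneous too, and you do not yet know $\sum_a\deg\theta_a=r$, which is an output of the criterion, not an input. (The $x_\alpha\partial_{x_\beta}$-type fields you mention are in general not logarithmic and play no role.)

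What is missing is a family of logarithmic fields whose determinant vanishes to order exactly one along each $D_j$. In coordinates with $f_j=x_1$, take $f_D\partial_{x_1}$ and $(f_D/f_j)\partial_{x_\alpha}$ for $2\le\alpha\le n$. These are logarithmic, since $\partial_{x_\alpha}(x_1)=0$ and $f_D/f_j$ is divisible by every $f_k$ with $k\neq j$. Their determinant $f_D(f_D/f_j)^{n-1}$ is divisible by $f_j$ exactly once because $D$ is reduced. Writing these fields in the basis $\theta$ shows that $\det(\theta)$ divides this determinant, so $e_j\le 1$ and no other prime factors occur; hence $\det(\theta)=cf_D$.

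Alternatively, localize at the height-one prime $(f_j)$. There $\Der(-\log D)$ is freely generated by $x_1\partial_{x_1},\partial_{x_2},\dots,\partial_{x_n}$, with determinant $x_1$. Any two bases have determinants differing by a unit, so every basis has $f_j$-adic valuation exactly one, and valuation zero at height-one primes not of the form $(f_k)$.
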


\begin{lemma}\label{lem:exponent-one}
Let $D$ be an essential central arrangement in $X=\C^n$.
\begin{enumerate}
\item If $D$ is irreducible, then every logarithmic vector field with linear coefficients is a constant multiple of $E$, and no nonzero logarithmic vector field has constant coefficients.
\item If $D$ is free and irreducible, then exactly one exponent of $D$ equals $1$, and $D$ admits a homogeneous basis of the form $E,\theta_2,\dots,\theta_n$ with $\deg\theta_a\geq 2$ for $a\geq 2$.
\item If $D$ is free and $D=D^{(1)}\times\dots\times D^{(c)}$ is its decomposition into irreducible factors, with $D^{(a)}$ an arrangement in $V_a$, $X=V_1\oplus\dots\oplus V_c$, then $\Der(-\log D)$ admits a homogeneous basis of the form $\{E_a,\theta_{a,2},\dots,\theta_{a,n_a}\}_{a=1,\dots,c}$, where $n_a=\dim V_a$, $E_a$ is the Euler vector field of $V_a$ and $\deg\theta_{a,j}\geq 2$.
\end{enumerate}
\end{lemma}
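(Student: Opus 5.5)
The plan is to prove the three parts in order; part (1) is the key input, and parts (2) and (3) follow from it together with Saito's criterion and the product decomposition.

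For (1), let $\theta=\sum_\alpha \ell_\alpha\partial_{x_\alpha}$ be logarithmic with linear coefficients, i.e.\ $\theta(x)=Ax$ for a matrix $A$. The condition $\theta(f_j)\in f_jS_X$ says that each linear form $f_j$, viewed as a covector, is an eigenvector of the transpose ${}^tA$: we have $f_j\circ A=c_jf_j$ for some $c_j\in\C$. Group the $f_j$ by eigenvalue, and let $U_c$ be the span of the $f_j$ with $c_j=c$. Since $D$ is essential, the $f_j$ span $X^*$. Eigenvectors for distinct eigenvalues are linearly independent, so $X^*=\bigoplus_c U_c$ is a direct sum, and each $f_j$ lies in exactly one summand. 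Dually this gives a splitting of $X$ under which $f_D$ factors into products in disjoint sets of variables. By irreducibility there is only one eigenvalue $c$, so ${}^tA=c\cdot\mathrm{id}$ on $X^*$, hence $A=c\cdot\mathrm{id}$ and $\theta=cE$.

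If $\theta$ has constant coefficients, then $\theta(f_j)$ is a constant lying in $f_jS_X$, so it must vanish. Thus $\theta$ is a constant vector lying in every $D_j$, hence in $\bigcap_j D_j=\{0\}$ by essentiality, and $\theta=0$.

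For (2), take a homogeneous basis of $\Der(-\log D)$. By (1) there is no basis element of degree $0$, and $E\in\Der(-\log D)$ has degree $1$. Every element of degree $1$ is a multiple of $E$, so the degree-$1$ part of $\Der(-\log D)$ is $\C E$. It follows that exactly one basis element has degree $1$: there must be at least one, since $E$ is not an $S_X$-combination of basis elements of degree at least $2$, and there is at most one, since two degree-$1$ basis elements would both be multiples of $E$ and hence linearly dependent. Replacing that element by $E$ gives the basis $E,\theta_2,\dots,\theta_n$ with $\deg\theta_a\geq2$.

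For (3), apply Theorem \ref{thm:free-facts}(2) repeatedly. This shows that each factor $D^{(a)}$ is free, and that the union of bases of the $\Der(-\log D^{(a)})$, pulled back to $X$ via $X=V_1\oplus\dots\oplus V_c$, is a basis of $\Der(-\log D)$. Each $D^{(a)}$ is essential in $V_a$ because $D$ is essential, and it is irreducible by construction, so (2) applied to $D^{(a)}$ in $V_a$ supplies the basis $E_a,\theta_{a,2},\dots,\theta_{a,n_a}$.

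The only step needing care is the eigenspace argument in (1): one must check that the decomposition $X^*=\bigoplus_c U_c$ really yields a product decomposition of $D$, which holds because each $f_j$ lies in a single eigenspace.
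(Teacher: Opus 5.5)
Your proposal is correct and follows essentially the same route as the paper. Part (1) uses the same argument: each $f_j$ is an eigenvector of ${}^tA$, essentiality gives a direct sum decomposition of $X^*$, and irreducibility forces a single eigenvalue. Part (2) uses the same degree-$0$/degree-$1$ count on a homogeneous basis; your ``at least one / at most one'' argument is an unpacked form of the paper's remark that the degree-one component of $\Der(-\log D)$ has dimension equal to the number of basis elements of degree $1$. Part (3) is, as in the paper, Theorem \ref{thm:free-facts}(2) followed by (2) applied to each essential irreducible factor.
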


\begin{proof}
(1) A vector field $\theta$ with linear coefficients maps linear forms to linear forms and thus defines a linear endomorphism $\phi_\theta$ of the space $X^*$ of linear forms. For degree reasons, $\theta$ is logarithmic if and only if $\phi_\theta(f_j)=c_jf_j$ for scalars $c_j$. Since $D$ is essential, the $f_j$ span $X^*$; hence $\phi_\theta$ is diagonalizable, $X^*=\bigoplus_\lambda X^*_\lambda$ is the direct sum of its eigenspaces, every $f_j$ lies in one of them, and every $X^*_\lambda$ is spanned by the $f_j$ it contains. If two distinct eigenvalues occurred, then in coordinates adapted to the decomposition $X^*=X^*_\lambda\oplus\bigoplus_{\lambda'\neq\lambda}X^*_{\lambda'}$ the polynomial $f_D$ would be a product of two nonconstant polynomials in disjoint sets of variables, contradicting the irreducibility of $D$. Hence $\phi_\theta=c\cdot\mathrm{id}$, that is, $\theta(h)=ch=cE(h)$ for all linear $h$, so $\theta=cE$. A constant vector field $\partial_v$ satisfies $\partial_v(f_j)=f_j(v)\in\C$, which lies in $(f_j)$ only if $f_j(v)=0$; since $D$ is essential this forces $v=0$.

(2) By (1) the homogeneous components of degree $0$ and $1$ of $\Der(-\log D)$ are $0$ and $\C E$. If $\theta_1,\dots,\theta_n$ is a homogeneous basis with degrees $d_a$, then no $d_a$ equals $0$, and the degree-one component has dimension $\#\{a\mid d_a=1\}$. Hence exactly one $d_a$ equals $1$, and the corresponding $\theta_a$ is a nonzero constant multiple of $E$.

(3) By Theorem \ref{thm:free-facts}(2) each $D^{(a)}$ is free and a basis of $\Der(-\log D)$ is obtained by taking the union of homogeneous bases of the $\Der(-\log D^{(a)})$; each $D^{(a)}$ is essential and irreducible in $V_a$, so (2) applies to it.
\end{proof}

\subsubsection*{Ziegler restriction}
Fix $H=D_i\in D$. On $H$ the hyperplanes $D_j\cap H$, $j\neq i$, need not be distinct; let $D|_H$ be the set of distinct hyperplanes of $H$ of this form. For $H'\in D|_H$ put $m_H(H')=\#\{j\neq i\mid D_j\cap H=H'\}$; the pair $(D|_H,m_H)$ is the \emph{Ziegler multiarrangement} \cite{Zie89}. If $\beta_{H'}$ is a linear form on $H$ defining $H'$, its module of logarithmic derivations is
\[
\Der(-\log(D|_H,m_H))=\{\eta\in\Der_\C(\C[H])\mid \eta(\beta_{H'})\in\beta_{H'}^{m_H(H')}\C[H]\ \text{for all } H'\in D|_H\},
\]
and $(D|_H,m_H)$ is free if this module is free; the degrees of a homogeneous basis are its exponents.

\begin{theorem}[{\cite[Theorems 8 and 11]{Zie89}}]\label{thm:Ziegler}
\begin{enumerate}
\item Homogeneous elements $\eta_2,\dots,\eta_n$ of $\Der(-\log(D|_H,m_H))$ form a basis if and only if
\[
\det(\eta_2,\dots,\eta_n)=c\prod_{H'\in D|_H}\beta_{H'}^{m_H(H')}\qquad\text{for some }c\in\C^*.
\]
\item If $D$ is free with exponents $(1,d_2,\dots,d_n)$ and $E,\theta_2,\dots,\theta_n$ is a homogeneous basis of $\Der(-\log D)$ with $\theta_a(f_i)=0$ for $a\geq 2$, then the restrictions $\ol\theta_2,\dots,\ol\theta_n$ of $\theta_2,\dots,\theta_n$ to $H$ form a homogeneous basis of $\Der(-\log(D|_H,m_H))$, of degrees $d_2,\dots,d_n$.
\end{enumerate}
\end{theorem}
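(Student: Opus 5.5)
This is Ziegler's result, quoted from \cite{Zie89}. The proof I would give follows the Saito-criterion pattern for part (1) and a determinant computation for part (2). Throughout I write $Q:=\prod_{H'\in D|_H}\beta_{H'}^{m_H(H')}$.

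\emph{Part (1).} First, for any $\eta_2,\dots,\eta_n\in\Der(-\log(D|_H,m_H))$ I show that $Q$ divides $\det(\eta_2,\dots,\eta_n)$.
\begin{enumerate}
\item Fix $H'$ and choose linear coordinates $y_2,\dots,y_n$ on $H$ with $y_2=\beta_{H'}$.
\item The row of the coefficient matrix corresponding to $\partial_{y_2}$ has entries $\eta_a(y_2)$. Each of these lies in $\beta_{H'}^{m_H(H')}\C[H]$, so $\beta_{H'}^{m_H(H')}$ divides the determinant.
\item The forms $\beta_{H'}$ are pairwise non-proportional, so their powers are coprime and $Q$ divides the determinant.
\end{enumerate}

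For the ``if'' direction, assume $\det(\eta_2,\dots,\eta_n)=cQ$ with $c\neq0$ and the $\eta_a$ homogeneous.
\begin{enumerate}
\item The $\eta_a$ are linearly independent over $\Frac\C[H]$, since their determinant is nonzero.
\item Any $\eta$ in the module can be written as $\eta=\sum_a(\det_a/\det)\,\eta_a$ by Cramer's rule. Here $\det_a$ is the determinant with $\eta$ in place of $\eta_a$.
\item By the divisibility just shown, $Q$ divides each $\det_a$, so all coefficients are polynomials and the $\eta_a$ generate.
\end{enumerate}

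For the ``only if'' direction, take a homogeneous basis $\eta_2,\dots,\eta_n$; I determine $\det(\eta_2,\dots,\eta_n)$ locally.
\begin{enumerate}
\item On the complement of $\bigcup H'$, the module localizes to all derivations. A basis of all derivations has unit determinant, so $\det$ has no zeros there.
\item At a generic point of $H'$, the localized module is free with basis $\beta_{H'}^{m_H(H')}\partial_{y_2},\partial_{y_3},\dots,\partial_{y_n}$. Its determinant is exactly $\beta_{H'}^{m_H(H')}$.
\item Two bases differ by an invertible matrix, so $\det$ vanishes to order exactly $m_H(H')$ along each $H'$ and nowhere else in codimension one. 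Since $\det$ is homogeneous, $\det=cQ$.
\end{enumerate}
I expect this local computation to be the main point needing care.

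\emph{Part (2).} Choose coordinates with $f_i=x_1$, so $H=(x_1=0)$.
\begin{enumerate}
\item Since $\theta_a(x_1)=0$, each $\theta_a$ has no $\partial_{x_1}$ component. Its restriction $\ol\theta_a$ is therefore a derivation of $\C[H]=\C[x_2,\dots,x_n]$.
\item Fix $H'\in D|_H$ with lift $\beta$ involving only $x_2,\dots,x_n$. The $f_j$ with $D_j\cap H=H'$ can be normalized as $f_j=\beta+c_jx_1$ with distinct $c_j$.
\item Then $\theta_a(\beta)=\theta_a(f_j)\in f_jS_X$ for every such $j$. These $f_j$ are pairwise coprime, so $\theta_a(\beta)$ is divisible by their product.
\item Restricting to $x_1=0$ gives $\ol\theta_a(\beta)\in\beta^{m_H(H')}\C[H]$. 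Hence $\ol\theta_a\in\Der(-\log(D|_H,m_H))$.
\end{enumerate}

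Next compute $\det(E,\theta_2,\dots,\theta_n)$, which equals $c\,f_D$ with $c\neq0$ by Saito's criterion (Theorem \ref{thm:free-facts}(1)).
\begin{enumerate}
\item The $\partial_{x_1}$-row of the coefficient matrix is $(x_1,0,\dots,0)$.
\item Expanding along this row gives $x_1\det'(\theta_2,\dots,\theta_n)=c\,f_D$, where $\det'$ is the minor in $x_2,\dots,x_n$. So $\det'=c\,f_D/x_1$.
\item Setting $x_1=0$ gives $\det(\ol\theta_2,\dots,\ol\theta_n)=c\prod_{j\neq i}f_j|_H=cQ$. This is nonzero, so every $\ol\theta_a$ is nonzero and homogeneous of degree $d_a$.
\end{enumerate}
Part (1) now shows that $\ol\theta_2,\dots,\ol\theta_n$ form a basis of $\Der(-\log(D|_H,m_H))$ with exponents $d_2,\dots,d_n$.
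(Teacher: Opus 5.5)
The paper gives no proof of this statement and simply quotes it from \cite[Theorems 8 and 11]{Zie89}. Your proposal is correct and essentially reconstructs Ziegler's own argument. For part (1) it is a Saito-type criterion for $(D|_H,m_H)$: divisibility of the determinant by your $Q$ via adapted coordinates, Cramer's rule for sufficiency, and localization at the height-one primes $(\beta_{H'})$ for necessity. Part (2) then restricts a basis with $\theta_a(f_i)=0$ and expands $\det(E,\theta_2,\dots,\theta_n)=c\,f_D$ along the $\partial_{x_1}$-row.

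The only implicit points are harmless: a linear change of coordinates rescales the determinant by a nonzero constant, and $\prod_{j\neq i}f_j|_H$ equals $Q$ only up to a nonzero scalar.
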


\subsection{Logarithmic annihilators}\label{sec:log-ann}
For a basis $\delta_1,\dots,\delta_n$ of $\Der(-\log D)$ of a free arrangement $D$ with complete factorization $f$, the quotients $\delta_k(f_j)/f_j$ are polynomials, and we put
\begin{equation}\label{eq:tilde-delta}
\wt\delta_k:=\delta_k-\sum_{j=1}^r s_j\,\frac{\delta_k(f_j)}{f_j}\in\sD_X[s].
\end{equation}
Each $\wt\delta_k$ annihilates $f^s$. 

\begin{theorem}[{\cite[Proposition 2(3)]{Mai16b}}]\label{thm:log-ann}
Let $D$ be a free central arrangement with complete factorization $f$ and let $\delta_1,\dots,\delta_n$ be a basis of $\Der(-\log D)$. Then
\[
\Ann_{\sD_X[s]}f^s=\sD_X[s](\wt\delta_1,\dots,\wt\delta_n),
\]
and the same holds for the analytic stalks $\Ann_{\sD^{an}_{X,x}[s]}f^s=\sD^{an}_{X,x}[s](\wt\delta_1,\dots,\wt\delta_n)$ at every point $x\in X$.
\end{theorem}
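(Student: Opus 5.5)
The plan is to compare the two ideals through their symbols with respect to a filtration in which $s$ has weight one. Put $I:=\sD_X[s](\wt\delta_1,\dots,\wt\delta_n)$. The inclusion $I\subseteq\Ann_{\sD_X[s]}f^s$ is immediate, since each $\wt\delta_k$ kills $f^s$. For the converse, filter $\sD_X[s]$ by the total order $F^{\#}$, in which $\partial_{x_\alpha}$ and $s_j$ have degree $1$ and $x_\alpha$ has degree $0$. Then $\gr^{\#}\sD_X[s]=\C[x,\xi,s]$, and the symbol of $\wt\delta_k$ is
\[
\sigma_k:=\delta_k(\xi)-\sum_j s_j\,\frac{\delta_k(f_j)}{f_j},
\]
where $\delta_k(\xi)=\sum_\alpha p_{k,\alpha}\xi_\alpha$. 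This is linear in $(\xi,s)$. Because the leading form of $P\,\wt\delta_k$ is the product of the leading forms, we get $(\sigma_1,\dots,\sigma_n)\subseteq\gr^{\#}I\subseteq\gr^{\#}\Ann f^s$. It therefore suffices to prove $(\sigma_1,\dots,\sigma_n)=\gr^{\#}\Ann f^s$. A standard strictness argument then gives $I=\Ann f^s$: take $P\in\Ann f^s$ of minimal $\#$-order outside $I$, subtract an element of $I$ with the same symbol, and lower the order.

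Next I would identify the upper bound. The classical argument (Ginsburg, Brian\c{c}on--Maisonobe) shows that $\gr^{\#}\Ann f^s$ is contained in the ideal of the closure
\[
W_f=\overline{\Big\{\big(x,\ \textstyle\sum_j s_j\,df_j/f_j(x),\ s\big)\ :\ x\in U\Big\}}\subseteq T^*X\times\C^r ,
\]
since the symbol of an annihilator must vanish on the graph of the multivalued section $s\,d\log f$. The variety $W_f$ is irreducible of dimension $n+r$, and $\sigma_1,\dots,\sigma_n$ vanish on it. So the statement reduces to the following claim: the ideal $J=(\sigma_1,\dots,\sigma_n)$ is prime and equals the ideal of $W_f$.

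To prove the claim I would proceed in three steps.
\begin{enumerate}
\item Show that $V(J)$ has pure dimension $n+r$, so that $\sigma_1,\dots,\sigma_n$ is a regular sequence in the Cohen--Macaulay ring $\C[x,\xi,s]$. Over a point $x$ lying in the stratum of an edge $W$, freeness of $D_W$ (Theorem \ref{thm:free-facts}(3)) and the local product structure $D_W\simeq D^W\times\C^{\dim W}$ reduce the count to the origin of an essential free arrangement. There I would use homogeneity with respect to the Euler field $E$ and induction on $n$ over the edges. The inductive hypothesis controls every fiber away from the origin, and the fiber over $0$ is the linear space cut out by the degree-zero parts of the $\sigma_k$. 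The condition needed is the linear-Jacobian-type/Koszul-free property of Calder\'on--Narv\'aez for locally quasi-homogeneous free divisors, extended to the twisted forms carrying $s_j$.
\item Show that $J$ is generically reduced and that $V(J)$ has no component other than $W_f$. Over $U$ the equations are $n$ independent linear conditions that cut out $W_f$ exactly. A component lying over $\bigcup_j D_j$ would, by the dimension count in step~1 combined with the same stratum-by-stratum induction, have dimension strictly less than $n+r$, which is impossible.
\item A Cohen--Macaulay ideal with no embedded components that is generically reduced and irreducible is prime. Hence $J=I(W_f)\supseteq\gr^{\#}\Ann f^s$, which closes the argument.
\end{enumerate}

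For the analytic stalks the same argument runs verbatim over $\sO^{an}_{X,x}[\xi,s]$. One can also deduce it from the algebraic statement using faithful flatness of $\sO^{an}_{X,x}$ over the localized polynomial ring.

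The main obstacle is step~1, namely the dimension bound for the fiber of $V(J)$ over the origin and over deep edges, uniformly in $s$. My first attempt would be to exploit the Saito basis $E,\theta_2,\dots,\theta_n$ of Lemma \ref{lem:exponent-one}. On that fiber the equation coming from $E$ becomes $\sum_\alpha x_\alpha\xi_\alpha=\sum_j s_j$, while the $\theta_a$ with $a\geq2$ have coefficients of degree at least $2$. I would then run the induction by blowing up along the strata, so that the degree-zero parts of the $\theta_a$ play no role and only the Euler relation survives at the origin.
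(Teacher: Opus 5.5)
The paper does not prove this statement; it quotes it from \cite{Mai16b}. Your framework is the standard one for such results, and every step outside Step~1 is sound:
\begin{itemize}
\item the total-order filtration and the strictness argument;
\item the chain $J\subseteq\gr^\sharp I\subseteq\gr^\sharp\Ann f^s\subseteq I(W_f)$;
\item generic reducedness over $U$, where the coefficient matrix of the $\delta_k$ is invertible;
\item ``unmixed, generically reduced, irreducible $\Rightarrow$ prime'';
\item the flat base change to analytic stalks.
\end{itemize}
The genuine gap is Step~1, on which Step~2 also depends. You never bound $\dim V(J)\cap\pi^{-1}(\bigcup_jD_j)$. Appealing to a Koszul-freeness property ``extended to the twisted forms carrying $s_j$'' amounts to assuming that $\sigma_1,\dots,\sigma_n$ is a regular sequence, which is the claim itself. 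The induction on $n$ with blow-ups along strata is never specified. There is also no ``uniformity in $s$'' difficulty. The fiber of $V(J)$ over $x$ is the kernel of the constant $n\times(n+r)$ matrix $M(x)$ whose $k$-th row is $(\delta_k(x)\mid-\varrho_{k1}(x),\dots,-\varrho_{kr}(x))$, with $\varrho_{kj}=\delta_k(f_j)/f_j$. So all that is needed is a pointwise rank bound.

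Here is that bound. Let $x\in W^\circ$ with $W\in L(D)$, $W\neq X$, and $d=\dim W$.
\begin{itemize}
\item Near $x$ only the $D_j$ with $j\in J(W,f)$ pass through $x$. Hence the constant fields $\partial_w$ ($w\in W$) and the Euler field $E_W=\sum_\alpha y_\alpha\partial_{y_\alpha}$ of $D_W$ lie in $\Der(-\log D)_x=\sum_k\sO_{X,x}\delta_k$. Here the $y_\alpha$ are linear forms spanning the forms vanishing on $W$, completed to coordinates.
\item Consequently the values $\delta_k(x)$ span $W$. So the projection of the row space of $M(x)$ to the $\xi$-block has dimension $d$.
\item Write $E_W=\sum_kc_k\delta_k$ with $c_k\in\sO_{X,x}$ and evaluate at $x$. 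This gives $\sum_kc_k(x)\delta_k(x)=E_W(x)=0$ and $\sum_kc_k(x)\varrho_{kj}(x)=E_W(f_j)(x)/f_j(x)$. The latter is $1$ for $j\in J(W,f)$ and $0$ otherwise.
\item So the row space of $M(x)$ contains $(0\mid-\one_{J(W,f)})\neq0$ in the kernel of that projection. Hence $\rank M(x)\geq d+1$, and $\dim V(J)\cap\pi^{-1}(W^\circ)\leq d+(n+r-d-1)=n+r-1$.
\end{itemize}
Every component of $V(J)\subseteq\C^{2n+r}$ has dimension at least $n+r$ by Krull's height theorem, since $J$ has $n$ generators. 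So no component lies over $\bigcup_jD_j$, and $V(J)=W_f$ is irreducible of pure dimension $n+r$. Macaulay's unmixedness theorem then shows $J$ has no embedded components, and your Steps~2--3 go through.

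Your remark that only the Euler relation survives at the origin is exactly this computation for $W=\{0\}$. The local Euler field $E_W$ handles every stratum at once, with no induction, blow-up or Koszul-freeness input. Freeness enters only because $n$ elements generate every stalk $\Der(-\log D)_x$, which is what makes the Krull bound and unmixedness available.
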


\begin{lemma}\label{lem:cyclic}
Let $f$ be an $r$-tuple of regular functions on a smooth variety $X$ and let $K=\mon{m_1,\dots,m_p}$ be a monoid ideal. Then
\[
N^K_f\simeq\frac{\sD_X[s]}{\Ann_{\sD_X[s]}f^s+\sum_{k=1}^p\sD_X[s]f^{m_k}}\qquad\text{and}\qquad
B^K_f=\Big(\Ann_{\sD_X[s]}f^s+\sum_{k=1}^p\sD_X[s]f^{m_k}\Big)\cap\C[s],
\]
where $f^{m_k}=\prod_i f_i^{m_{k,i}}$. The same holds for the analytic stalks at every point.
\end{lemma}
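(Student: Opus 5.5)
The plan is to use the $\sD_X[s]$-linear surjection $\sD_X[s]\to\sD_X[s]f^s$, $P\mapsto Pf^s$, which is available because $\sD_X[s]f^s$ is by definition the submodule of $j_*(\sO_U[s]f^s)$ generated by $f^s$. Its kernel is $\Ann_{\sD_X[s]}f^s$, so
\[
\sD_X[s]f^s\simeq\sD_X[s]/\Ann_{\sD_X[s]}f^s .
\]

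Next I identify the image of the denominator $\sum_{m\in K}\sD_X[s]f^{s+m}$ under this isomorphism. By \eqref{eq:generators-suffice} this sum equals $\sum_k\sD_X[s]f^{s+m_k}$. Since each $m_k\in\N^r$, the element $f^{s+m_k}$ is the regular function $f^{m_k}=\prod_i f_i^{m_{k,i}}$, an element of $\sD_X[s]$ of order zero, applied to $f^s$. Hence
\[
\sD_X[s]f^{s+m_k}=\sD_X[s]f^{m_k}\cdot f^s,
\]
which is the image of the left ideal $\sD_X[s]f^{m_k}$. Thus the preimage of the denominator in $\sD_X[s]$ is $\Ann_{\sD_X[s]}f^s+\sum_k\sD_X[s]f^{m_k}$. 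Passing to quotients gives the stated isomorphism for $N^K_f$.

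For the ideal, note that $N^K_f$ is cyclic, generated by the class of $1$. Because $\C[s]$ is central in $\sD_X[s]$, a polynomial $b(s)$ annihilates $N^K_f$ if and only if it kills that generator. Writing $\mathcal I=\Ann_{\sD_X[s]}f^s+\sum_k\sD_X[s]f^{m_k}$, this means $b(s)\in\mathcal I$. Conversely, if $b\in\mathcal I$, then for any $P$ we have $bP=Pb\in\mathcal I$ because $\mathcal I$ is a left ideal. Hence $B^K_f=\mathcal I\cap\C[s]$.

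The analytic stalk version follows by the same argument, with $\sD^{an}_{X,x}[s]$ and germs in place of the algebraic objects. No step is a real obstacle. The only point needing care is the centrality of $\C[s]$, which is what makes annihilating the cyclic generator equivalent to annihilating the whole module.
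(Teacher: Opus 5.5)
Your proposal is correct and follows essentially the same route as the paper's proof. The paper also pulls the denominator back along the surjection $P\mapsto Pf^s$ using $f^{s+m_k}=f^{m_k}f^s$. It then uses centrality of $\C[s]$ to reduce annihilating the cyclic module to annihilating the class of $1$.
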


\begin{proof}
The map $\sD_X[s]\to\sD_X[s]f^s$, $P\mapsto Pf^s$, has kernel $\Ann_{\sD_X[s]}f^s$. Since $f^{s+m_k}=f^{m_k}f^s$, the inverse image of the submodule $\sum_k\sD_X[s]f^{s+m_k}$, which equals $\sum_{m\in K}\sD_X[s]f^{s+m}$ by \eqref{eq:generators-suffice}, is the left ideal $\Ann_{\sD_X[s]}f^s+\sum_k\sD_X[s]f^{m_k}$. This gives the presentation. As $\C[s]$ is central in $\sD_X[s]$, a polynomial $b(s)$ annihilates the cyclic module if and only if it annihilates the class of $1$, that is, if and only if it lies in the displayed left ideal.
\end{proof}

We now fix $X=\C^n$. Following \cite[Remark 3.1]{Wu20}, we use analytic stalks for local statements: for a $\sD_X[s]$-module $M$ (a module over the Weyl algebra tensored with $\C[s]$) and $x\in X$, we write $M_x$ for the stalk at $x$ of the analytification $\sO^{an}_X\otimes_{\sO_X}M$, and we put
\[
B^{a,b}_{f,x}:=\Ann_{\C[s]}(M^{a,b}_f)_x=\{b(s)\in\C[s]\mid b(s)f^{s-a}\in\sD^{an}_{X,x}[s]f^{s-b}\},\qquad B^K_{f,x}:=\Ann_{\C[s]}(N^K_f)_x .
\]

\begin{lemma}\label{lem:stalks}
Let $f$ be an $r$-tuple of polynomials on $X=\C^n$, $a\geq b$ in $\Z^r$, and $K$ a monoid ideal.
\begin{enumerate}
\item $B^{a,b}_f=\bigcap_{x\in X}B^{a,b}_{f,x}$ and $B^K_f=\bigcap_{x\in X}B^K_{f,x}$.
\item If $f$ is a complete factorization of a central arrangement, then $B^{a,b}_{f,0}\subseteq B^{a,b}_{f,x}$ and $B^K_{f,0}\subseteq B^K_{f,x}$ for every $x\in X$. Consequently $B^{a,b}_f=B^{a,b}_{f,0}$ and $B^K_f=B^K_{f,0}$.
\end{enumerate}
\end{lemma}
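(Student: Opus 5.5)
For part (1), I will use that the modules involved are coherent $\sD_X[s]$-modules on the affine space $X=\C^n$, and that analytification is faithfully flat on stalks. Write $M$ for $M^{a,b}_f$ or $N^K_f$.

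The inclusion $\Ann_{\C[s]}M\subseteq\bigcap_x\Ann_{\C[s]}M_x$ is clear, since $M_x=\sO^{an}_{X,x}\otimes_{\sO_X}M$. For the reverse inclusion, let $b(s)$ annihilate every $M_x$. Then the submodule $b(s)M$ is a coherent algebraic $\sD_X$-module whose analytic stalks $(b(s)M)_x=b(s)M_x$ vanish at every point; here I use exactness of analytification. The map $\sO_{X,x}\to\sO^{an}_{X,x}$ is faithfully flat, so the algebraic stalks of $b(s)M$ vanish at all closed points. Hence $b(s)M=0$. Equivalently, by Lemma \ref{lem:cyclic}, $b(s)$ lies in the left ideal generated by $\Ann f^s$ and the $f^{m_k}$ after tensoring with every $\sO^{an}_{X,x}$, and faithful flatness descends this membership.

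For part (2), I will use the $\C^*$-action and the centrality of the arrangement. Each $f_j$ is homogeneous linear, so for $t\in\C^*$ the dilation $\mu_t(x)=tx$ satisfies $f_j\circ\mu_t=tf_j$. Pulling back, the map $f^{s}\mapsto t^{\sum s_j}f^s$ sends $\sD[s]f^{s-a}$ to itself up to a unit factor $t^{-\sum a_j}$ that commutes with $\C[s]$-annihilators. Hence $B_{f,x}=B_{f,tx}$ for all $t\neq 0$, for both $M^{a,b}$ and $N^K$.

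The key step is then to compare stalks at $0$ with nearby stalks. An identity $b(s)f^{s-a}=P f^{s-b}$ with $P\in\sD^{an}_{X,0}[s]$ holds on a neighborhood $U$ of $0$, and therefore in the stalk at every $y\in U$. The same argument works for $N^K$ via Lemma \ref{lem:cyclic}, since $b(s)=Q+\sum_k P_kf^{m_k}$ with $Q\in\Ann f^s$, and this persists on a neighborhood. So $B_{f,0}\subseteq B_{f,y}$ for $y$ near $0$. Given an arbitrary $x$, choose $t$ small so that $tx\in U$; then
\[
B_{f,0}\subseteq B_{f,tx}=B_{f,x}.
\]

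The one point needing care is that the annihilator of $f^s$ in the analytic stalk restricts correctly to nearby stalks, so that $Q$ remains in the annihilator. This is immediate because annihilating $f^s$ is a local identity of sections. The consequence $B_f=B_{f,0}$ then follows by combining this inclusion with part (1). I expect the main subtlety to be the descent in part (1) from analytic to algebraic vanishing, which I handle by faithful flatness plus coherence.
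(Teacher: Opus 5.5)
Your proposal is correct and essentially follows the paper's proof. For part (1) you use faithful flatness of $\sO^{an}_{X,x}$ over $\sO_{X,x}$, applied to the whole submodule $b(s)M$; that is fine, since faithful flatness needs no finiteness, whereas the paper applies it to the coherent $\sO_X$-module $\sO_X\cdot b(s)m$ for each $m$. For part (2) you use the dilation $x\mapsto tx$, which the paper applies in one stroke, transporting the functional equation from a neighborhood $\Omega$ of $0$ to $t^{-1}\Omega\ni x$, rather than splitting it into dilation invariance plus openness.

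One thing to tidy: the dilation should be realized as $g(x,s)f^s\mapsto g(tx,s)f^s$, with no formal factor $t^{\sum s_j}$, since that factor is not in $\C[s]$. You should also record the check that this map intertwines $\partial_{x_\alpha}$ with $t^{-1}\partial_{x_\alpha}$, which holds because $\partial_{x_\alpha}f_j/f_j$ is homogeneous of degree $-1$.
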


\begin{proof}
(1) Let $M$ be one of the modules in question. If $b(s)$ annihilates $M$, then it annihilates $\sO^{an}_X\otimes_{\sO_X}M$ and all its stalks. Conversely, if $b(s)$ annihilates every stalk $M_x$, then for $m\in M$ the section $b(s)m$ of the coherent $\sO_X$-module $\sO_X\cdot b(s)m$ vanishes in $\sO^{an}_{X,x}\otimes_{\sO_{X,x}}(\sO_X b(s)m)_x$ for every $x$, hence in $(\sO_Xb(s)m)_x$ by the faithful flatness of $\sO^{an}_{X,x}$ over $\sO_{X,x}$, hence $b(s)m=0$.

(2) Let $b(s)f^{s-a}=Pf^{s-b}$ with $P\in\sD^{an}_{X,x}[s]$ defined on a neighborhood $\Omega$ of $0$ (the case of $N^K_f$ is identical). For $t\in\C^*$ consider the $\C[s]$-linear map $\Sigma_t\colon g(x,s)f^s\mapsto g(tx,s)f^s$ on the ambient module $j_*(\sO^{an}_U[s]f^s)$. Since each $f_j$ is linear, $(\partial_{x_\alpha}f_j/f_j)(tx)=t^{-1}(\partial_{x_\alpha}f_j/f_j)(x)$, and one checks from the derivation rule $\partial_{x_\alpha}(gf^s)=(\partial_{x_\alpha}g+g\sum_j s_j\partial_{x_\alpha}f_j/f_j)f^s$ that $\Sigma_t\circ P=P^{t}\circ\Sigma_t$, where $P^t$ is obtained from $P$ by substituting $x\mapsto tx$ in the coefficients and $\partial_{x_\alpha}\mapsto t^{-1}\partial_{x_\alpha}$; the operator $P^t$ is defined on $t^{-1}\Omega$. Moreover $\Sigma_t(f^{s+c})=t^{|c|}f^{s+c}$, where $|c|=\sum_jc_j$, because $f^c$ is a polynomial of degree $|c|$. Applying $\Sigma_t$ to the functional equation gives $b(s)f^{s-a}=t^{|a|-|b|}P^tf^{s-b}$ on $t^{-1}\Omega$. Choosing $|t|$ small, $t^{-1}\Omega$ contains any given point $x$, so $b(s)\in B^{a,b}_{f,x}$. The last assertion follows from (1).
\end{proof}

\begin{theorem}[{\cite[Theorems 1.1 and 1.4]{Bat23}}]\label{thm:Bath-principal-radical}
Let $D$ be a free (more generally, tame) central arrangement with complete factorization $f$ and let $a\in\N^r\setminus\{0\}$. Then the ideal $B^{-a}_f$ is principal and radical. Consequently, by translation, $B^{a,b}_f=\tau_a(B^{-(a-b)}_f)$ is principal and radical for all $a\geq b$ in $\Z^r$ with $a\neq b$.
\end{theorem}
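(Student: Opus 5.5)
The first assertion is not something I would reprove. For $a\in\N^r\setminus\{0\}$ it is exactly the content of \cite[Theorems 1.1 and 1.4]{Bat23}, applied to the tame arrangement $D$; I would cite it as such. The only check needed is that a free arrangement is tame. The modules of logarithmic differential forms $\Omega^p(\log D)$ of a free arrangement are exterior powers of the free dual of $\Der(-\log D)$. They are therefore free and have projective dimension $0$, so the tameness bound on projective dimension holds trivially. The real work is the reduction of the general ideals $B^{a,b}_f$ to this case, and I would do it purely by the translation formalism of \S\ref{sec:relD}.

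Fix $a\geq b$ in $\Z^r$ with $a\neq b$, and put $c=a-b\in\N^r\setminus\{0\}$. I apply the translation isomorphism $M^{b',c'}_f\simeq M^{a+b',a+c'}_f$ induced by $\sigma_a$ in \eqref{eq:translation}, with $b'=0$ and $c'=b-a=-c$. This gives a $\sD_X$-linear bijection
\[
M^{-c}_f=M^{0,-c}_f\;\xrightarrow{\ \sim\ }\;M^{a,b}_f ,
\]
which is semilinear with respect to the $\C$-algebra automorphism $\tau_a\colon p(s)\mapsto p(s-a)$ of $\C[s]$. Concretely, $\sigma_a(p(s)m)=p(s-a)\sigma_a(m)$. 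Hence $p(s)$ kills $M^{-c}_f$ if and only if $\tau_a(p)$ kills $M^{a,b}_f$, that is,
\[
B^{a,b}_f=\tau_a\big(B^{-c}_f\big)=\tau_a\big(B^{-(a-b)}_f\big).
\]
For this I only need that $\sigma_a$ is bijective and maps $\sD_X[s]f^{s-b'}$ onto $\sD_X[s]f^{s-a-b'}$ for every $b'$, both of which were recorded after \eqref{eq:translation}.

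Finally, $\tau_a$ is a ring automorphism of $\C[s]$ with inverse $\tau_{-a}$, so it preserves both properties in question. If $B^{-c}_f=(g)$, then $B^{a,b}_f=(g(s-a))$, so the ideal stays principal. An ideal $I$ is radical if and only if $\C[s]/I$ is reduced, and $\tau_a$ induces an isomorphism $\C[s]/I\simeq\C[s]/\tau_a(I)$, so radicality is preserved as well. Since $c\in\N^r\setminus\{0\}$, Bath's theorem applies to $B^{-c}_f$, and therefore $B^{a,b}_f$ is principal and radical.

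There is no genuine obstacle beyond invoking \cite{Bat23}. The one point to get right is the bookkeeping of signs in the translation. One must choose $b'=0$ and $c'=b-a$, so that the shifted module is $M^{-c}_f$ with $c\geq 0$ and lies in the range of Bath's theorem. One must also check that the semilinearity is with respect to $\tau_a$ and not $\tau_{-a}$.
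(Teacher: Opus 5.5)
Your proposal is correct and matches the paper. Like the paper, you take the statement for $B^{-a}_f$ directly from \cite[Theorems 1.1 and 1.4]{Bat23}, with free arrangements being tame. You then get the general case from the $\tau_a$-semilinear translation isomorphism $M^{0,-c}_f\simeq M^{a,b}_f$ induced by $\sigma_a$, which gives $B^{a,b}_f=\tau_a(B^{-(a-b)}_f)$; principality and radicality are preserved because $\tau_a$ is a ring automorphism of $\C[s]$.
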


For a free arrangement $D$ and $i\in\{1,\dots,r\}$ we henceforth write
\begin{equation}\label{eq:bDi}
B^{-e_i}_f=(b_{D,i}(s)),\qquad b_{D,i}\ \text{squarefree, unique up to a nonzero constant.}
\end{equation}
By Lemma \ref{lem:stalks}, $b_{D,i}$ also generates $B^{-e_i}_{f,0}$.

\begin{lemma}\label{lem:unit-factors}
Let $f$ be an $r$-tuple of polynomials on $X=\C^n$, let $x\in X$, $J_x=\{j\mid f_j(x)=0\}$, and let $f_{J_x}=(f_j)_{j\in J_x}$, $s_{J_x}=(s_j)_{j\in J_x}$. Put $u^s=\prod_{j\notin J_x}f_j^{s_j}$. Then conjugation $P\mapsto u^sPu^{-s}$ is an automorphism $\phi_u$ of $\sD^{an}_{X,x}[s]$ which is the identity on $\sO^{an}_{X,x}[s]$ and sends a vector field $\vartheta$ to $\vartheta-\sum_{j\notin J_x}s_j\vartheta(f_j)/f_j$, and multiplication by $u^s$ induces, for all $a\geq b$ in $\Z^r$, isomorphisms of $\C[s]$-modules
\[
(M^{a_{J_x},b_{J_x}}_{f_{J_x}})_x\otimes_{\C[s_{J_x}]}\C[s]\xrightarrow{\ \sim\ }(M^{a,b}_f)_x ,
\]
where $M^{a_{J_x},b_{J_x}}_{f_{J_x}}$ is the module of the tuple $f_{J_x}$ over $\C[s_{J_x}]$. Consequently $B^{a,b}_{f,x}=B^{a_{J_x},b_{J_x}}_{f_{J_x},x}\cdot\C[s]$, and similarly for the modules $N^K_f$ with $K\subseteq\N^r$ replaced by its image in $\N^{J_x}$.
\end{lemma}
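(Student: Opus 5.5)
The plan is to work directly with the analytic stalk at $x$ and to reduce everything to the fact that $u=\prod_{j\notin J_x}f_j$ is a unit near $x$. Since $f_j(x)\neq 0$ for $j\notin J_x$, we can fix a branch of $\log f_j$ on a small neighborhood of $x$. Then $u^s=\exp(\sum_{j\notin J_x}s_j\log f_j)$ is a well-defined element of $\sO^{an}_{X,x}[[s]]$, and more usefully it is an invertible element of the ambient module $j_*(\sO^{an}_U[s]f^s)_x$ in the following sense: the ambient stalk is the free $\sO^{an}_{X,x}[s]$-module on $f^s=u^s f_{J_x}^{s_{J_x}}$. Hence
\[
j_*(\sO^{an}_U[s]f^s)_x\simeq j_*(\sO^{an}_U[s_{J_x}]f_{J_x}^{s_{J_x}})_x\otimes_{\C[s_{J_x}]}\C[s],
\]
with $g\,f_{J_x}^{s_{J_x}}\otimes p(s)\mapsto p(s)\,g\,u^sf_{J_x}^{s_{J_x}}$.

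First we check the conjugation claim. For a vector field $\vartheta$ and any section $m$ one has $\vartheta(u^s m)=u^s\bigl(\vartheta+\sum_{j\notin J_x}s_j\vartheta(f_j)/f_j\bigr)m$, since $\vartheta(f_j)/f_j$ is holomorphic at $x$. Consequently
\[
u^s\vartheta u^{-s}=\vartheta-\sum_{j\notin J_x}s_j\frac{\vartheta(f_j)}{f_j}.
\]
These operators, together with $\sO^{an}_{X,x}[s]$, satisfy the defining relations of $\sD^{an}_{X,x}[s]$. The inverse is conjugation by $u^{-s}$, so $\phi_u$ is an automorphism.

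Next, multiplication by $u^s$ intertwines the action of $P$ on the $f_{J_x}$-side with the action of $\phi_u(P)$ on the $f$-side. Since $\phi_u$ is bijective, it maps
\[
\sD^{an}_{X,x}[s]\,f_{J_x}^{s_{J_x}-a_{J_x}}\quad\text{onto}\quad\sD^{an}_{X,x}[s]\,u^sf_{J_x}^{s_{J_x}-a_{J_x}}.
\]
The latter equals $\sD^{an}_{X,x}[s]f^{s-a}$, because $f^{s-a}$ differs from $u^sf_{J_x}^{s_{J_x}-a_{J_x}}$ by the unit $\prod_{j\notin J_x}f_j^{-a_j}\in\sO^{an}_{X,x}$. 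Also,
\[
\sD^{an}_{X,x}[s]\,f_{J_x}^{s_{J_x}-a_{J_x}}=\bigl(\sD^{an}_{X,x}[s_{J_x}]f_{J_x}^{s_{J_x}-a_{J_x}}\bigr)\otimes_{\C[s_{J_x}]}\C[s],
\]
because $\C[s]$ is free over $\C[s_{J_x}]$ and the ambient module is the corresponding base change. Taking quotients, and using that $-\otimes_{\C[s_{J_x}]}\C[s]$ is exact (flat), gives the displayed isomorphism of $\C[s]$-modules. It is semilinear only in the trivial sense, i.e. it is $\C[s]$-linear, since $u^s$ commutes with $s$.

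Finally, the annihilator statement follows from this isomorphism. For a $\C[s_{J_x}]$-module $M$, one has $\Ann_{\C[s]}(M\otimes\C[s])=\Ann(M)\C[s]$: the module $M\otimes\C[s]$ is $M[s_j:j\notin J_x]$, and a polynomial annihilates it if and only if each of its coefficients in the variables $s_j$, $j\notin J_x$, annihilates $M$. The case of $N^K_f$ is identical, after replacing the generators $f^{s+m_k}$ by $u^sf_{J_x}^{s_{J_x}+(m_k)_{J_x}}$ up to units.

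The only delicate point is making sense of $u^s$; it is resolved by working inside the free module on $f^s$ and verifying the conjugation formula there.
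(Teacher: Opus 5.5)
Your proposal is correct and follows essentially the same route as the paper. You conjugate by $u^s$, then use $\phi_u(P)(u^sh)=u^sP(h)$ together with the units $f_j^{-a_j}$ ($j\notin J_x$) to carry the cyclic submodules across, apply flat base change along $\C[s_{J_x}]\to\C[s]$, and compute annihilators coefficientwise in the variables $s_j$, $j\notin J_x$. One small correction, which does not affect the argument: the ambient stalk is free over $\sO^{an}_{X,x}\big[\prod_{j\in J_x}f_j^{-1}\big][s]$, not over $\sO^{an}_{X,x}[s]$.
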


\begin{proof}
Since every $f_j$ with $j\notin J_x$ is a unit at $x$, $u^{\pm s}$ acts on $j_*(\sO^{an}_U[s]f^s)_x$ and $\phi_u(P)=u^sPu^{-s}$ is a well-defined differential operator with coefficients in $\sO^{an}_{X,x}[s]$ whose formula on functions and on vector fields is as stated. Thus $\phi_u$ is an automorphism of $\sD^{an}_{X,x}[s]$ fixing $\C[s]$, with $\phi_u(P)(u^sh)=u^sP(h)$. Hence multiplication by $u^s$ maps $\sD^{an}_{X,x}[s]f_{J_x}^{s_{J_x}+c_{J_x}}$ onto $\sD^{an}_{X,x}[s]f^{s+c}$ for every $c\in\Z^r$, compatibly with inclusions. Finally $\sD^{an}_{X,x}[s]f_{J_x}^{s_{J_x}+c_{J_x}}=\sD^{an}_{X,x}[s_{J_x}]f_{J_x}^{s_{J_x}+c_{J_x}}\otimes_{\C[s_{J_x}]}\C[s]$, since the variables $s_j$, $j\notin J_x$, do not occur in $f_{J_x}^{s_{J_x}+c_{J_x}}$ and $\C[s]$ is free over $\C[s_{J_x}]$; annihilators commute with this base change because a polynomial in $\C[s]=\C[s_{J_x}][s_j\mid j\notin J_x]$ annihilates $N\otimes_{\C[s_{J_x}]}\C[s]$ if and only if all its coefficients annihilate $N$.
\end{proof}

\begin{lemma}\label{lem:localization-edge}
Let $D$ be a central arrangement with complete factorization $f$, let $W\in L(D)$ and let $x\in W^\circ:=W\setminus\bigcup_{j\notin J(W,f)}D_j$. Let $i\in\{1,\dots,r\}$ and put $J=J(W,f)$. If $i\notin J$, then $(M^{-e_i}_f)_x=0$ and $B^{-e_i}_{f,x}=\C[s]$. If $i\in J$, then
\[
B^{-e_i}_{f,x}=B^{-e_i}_{f_W,0}\cdot\C[s],
\]
where $B^{-e_i}_{f_W,0}\subseteq\C[s_J]$ is the local Bernstein-Sato ideal at the origin of $X/W$ of the complete factorization $f_W$ of the essential arrangement $D^W$. Consequently, if $D$ is free then
\begin{equation}\label{eq:intersection-over-edges}
B^{-e_i}_f=\bigcap_{W\in L(D)}B^{-e_i}_{f_W,0}\cdot\C[s]=\Big(\operatorname{lcm}_{W\in L(D),\,i\in J(W,f)}b_{D^W,i}\Big),
\end{equation}
where $b_{D^W,i}\in\C[s_{J(W,f)}]$ is the generator of $B^{-e_i}_{f_W}=B^{-e_i}_{f_W,0}$ provided by Theorem \ref{thm:Bath-principal-radical} applied to the free arrangement $D^W$.
\end{lemma}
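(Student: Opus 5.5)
The plan has three steps: reduce the case $i\notin J$ to a statement about units, handle $i\in J$ by Lemma~\ref{lem:unit-factors} together with a product decomposition transverse to $W$, and then glue the stalks along the edges using Lemma~\ref{lem:stalks} and Theorem~\ref{thm:Bath-principal-radical}.

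\textbf{Step 1: the case $i\notin J$.} Let $x\in W^\circ$. The linear forms vanishing at $x$ are exactly the $f_j$ with $j\in J$, so $J_x=J$. If $i\notin J$, then $f_i$ is a unit in $\sO^{an}_{X,x}$. Hence
\[
f^s=f_i^{-1}\cdot f^{s+e_i}\in\sD^{an}_{X,x}[s]f^{s+e_i},
\]
so $(M^{-e_i}_f)_x=0$ and $B^{-e_i}_{f,x}=\C[s]$.

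\textbf{Step 2: the case $i\in J$.} Lemma~\ref{lem:unit-factors} gives
\[
B^{-e_i}_{f,x}=B^{-e_i}_{f_J,x}\cdot\C[s],
\]
where $f_J=f_W$ is regarded as a tuple on $X$. Choose linear coordinates $X=W'\oplus W$ with $W'$ a complement to $W$. The forms $f_j$, $j\in J$, depend only on the $X/W\cong W'$ coordinates. So, near $x=(0,w)$, the module $\sD_X[s_J]f_W^{s_J}/\sD_X[s_J]f_W^{s_J+e_i}$ is the exterior product of the corresponding module on $X/W$ with $\sO_W$.

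The key point is that the annihilator does not change under this product. Write $\sD_X=\sD_{X/W}\boxtimes\sD_W$. Then $f_W^{s}$ is constant along $W$, so the defining left ideal is generated by operators on $X/W$ together with $\partial_{w}$. Concretely, $\Ann f_W^s$ is generated by $\Ann_{\sD_{X/W}[s]}f_W^s$ and the $\partial_{w_k}$. One sees this using Theorem~\ref{thm:log-ann}: the logarithmic basis of $D_W$ is a basis of $D^W$ together with the $\partial_{w_k}$. The basis of $D^W$ is available because $D_W$ is free by Theorem~\ref{thm:free-facts}(3).

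With this description, Lemma~\ref{lem:cyclic} shows that the stalk module at $(0,w)$ is (analytic) $\sO_{W,w}$ tensored with the module on $X/W$ at $0$. Taking $\C[s]$-annihilators, they agree by faithful flatness. Translating along $W$ (translation invariance) moves $w$ to $0$. This gives
\[
B^{-e_i}_{f,x}=B^{-e_i}_{f_W,0}\cdot\C[s].
\]
The same argument works without freeness: one can use the product structure of $\sD$-modules $\sD_{X/W}[s]f^s\boxtimes\sO_W$ directly, since $f^s$ is pulled back from $X/W$.

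\textbf{Step 3: gluing over edges.} Every $x\in X$ lies in $W^\circ$ for the unique smallest edge $W$ containing $x$, namely the intersection of the $D_j$ through $x$. Lemma~\ref{lem:stalks}(1) then gives
\[
B^{-e_i}_f=\bigcap_W B^{-e_i}_{f,x_W}.
\]
By Steps 1 and 2 this equals $\bigcap_W B^{-e_i}_{f_W,0}\cdot\C[s]$.

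When $D$ is free, each $D^W$ is free by Theorem~\ref{thm:free-facts}(3). Theorem~\ref{thm:Bath-principal-radical} and Lemma~\ref{lem:stalks}(2) applied to $D^W$ then give
\[
B^{-e_i}_{f_W,0}=(b_{D^W,i}).
\]
Extension to $\C[s]$ preserves principality. Since $\C[s]$ is a UFD, an intersection of principal ideals is generated by the lcm, which yields \eqref{eq:intersection-over-edges}.

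\textbf{Main obstacle.} The main obstacle is Step 2, namely justifying rigorously the product decomposition along $W$ at the level of analytic stalks. I would handle it by writing the presentation of Lemma~\ref{lem:cyclic} explicitly in split coordinates and checking that the left ideal is extended from $\sD_{X/W}[s]$.
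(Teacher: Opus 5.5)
Your proposal is correct. Steps 1 and 3 are the paper's argument: the unit case, then Lemma \ref{lem:stalks}(1) over $X=\bigsqcup_W W^\circ$, Bath's principality, and the lcm in the UFD $\C[s]$. Step 2 takes a different route.

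The paper never identifies the stalk module; it works directly with functional equations. A functional equation on $X/W$ with $P\in\sD^{an}_{X/W,0}[s_J]$ is trivially valid near $x$. Conversely, given $b(s)f_W^{s_J}=Pf_if_W^{s_J}$ with $P\in\sD^{an}_{X,x}[s]$, the paper proceeds as follows:
\begin{enumerate}
\item it writes $P=\sum_\beta P_\beta\partial_w^\beta$;
\item it drops the terms with $\beta\neq 0$, which kill the $w$-independent function $f_if_W^{s_J}$;
\item it restricts the coefficients of $P_0$ to the slice $w=w_0$;
\item it compares coefficients in the $s_j$, $j\notin J$.
\end{enumerate}
This is elementary and uses neither freeness nor any presentation of $\Ann f^s$.

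Your route instead proves a stronger, module-level statement: the stalk at $x$ is a base change of the stalk at $0\in X/W$. This is the kind of fact that also underlies the pullback argument in Corollary \ref{cor:unit-zero-locus}. Two points need to be made precise.

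First, the analytic stalk is not $\sO^{an}_{W,w}$ tensored over $\C$ with the stalk at $0$, because $\C\{y,w\}\neq\C\{y\}\otimes_\C\C\{w\}$ (here $y$ are coordinates on $X/W$). Write $M_X$ and $M_{X/W}$ for the unit-shift module of $f_W$ over $\C[s_J]$ on $X$ and on $X/W$. The correct identification is $(M_X)_x\simeq\sO^{an}_{X,x}\otimes_{\sO^{an}_{X/W,0}}(M_{X/W})_0$. Equality of annihilators then comes from the faithful flatness of the local homomorphism $\sO^{an}_{X/W,0}\to\sO^{an}_{X,x}$, and the translation along $W$ becomes superfluous.

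Second, the lemma is stated for arbitrary central arrangements, so the version through Theorem \ref{thm:log-ann} (which needs freeness) does not suffice. Your exterior-product remark should be the main argument, and it is in fact the simplest one. Let $U'$ be the complement of $D^W$ in $X/W$. Inside $j_*(\sO_{U'}[s_J]f_W^{s_J})\otimes_\C\C[w]$ one has $\sD_X[s_J]f_W^{s_J+c}=\big(\sD_{X/W}[s_J]f_W^{s_J+c}\big)\otimes_\C\C[w]$. Hence $M_X=M_{X/W}\otimes_\C\C[w]$, and $\sO^{an}_{X,x}\otimes_{\C[y,w]}M_X=\sO^{an}_{X,x}\otimes_{\C[y]}M_{X/W}$, which is exactly the base change above. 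No logarithmic annihilator and no Lemma \ref{lem:cyclic} are needed.
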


\begin{proof}
Since $J_x=J$, Lemma \ref{lem:unit-factors} reduces the statement to the tuple $f_W=(f_j)_{j\in J}$; if $i\notin J$ then $f_i$ is a unit at $x$ and $(M^{-e_i}_f)_x=0$. Assume $i\in J$ and choose linear coordinates $X=N\oplus W$ such that the $f_j$, $j\in J$, are linear forms on $N\simeq X/W$; write $x=(0,w_0)$. A functional equation $b(s)f_W^{s_J}=Pf_if_W^{s_J}$ with $P\in\sD^{an}_{N,0}[s_J]$ remains valid on $X$ near $x$, whence $B^{-e_i}_{f_W,0}\C[s]\subseteq B^{-e_i}_{f,x}$. Conversely, let $b(s)f_W^{s_J}=Pf_if_W^{s_J}$ with $P\in\sD^{an}_{X,x}[s]$. Write $P=\sum_\beta P_\beta\partial_w^\beta$ with $P_\beta\in\sD^{an}_{N\times W,x}[s]$ containing no derivatives in the $W$-directions. Since $f_W^{s_J}$ does not depend on the coordinates $w$ of $W$, the terms with $\beta\neq 0$ act by zero, and restricting the coefficients of $P_0$ to the slice $w=w_0$ gives an operator $P'\in\sD^{an}_{N,0}[s]$ with $b(s)f_W^{s_J}=P'f_if_W^{s_J}$. Expanding $b$ and $P'$ in the variables $s_j$, $j\notin J$, and comparing coefficients (the module $j_*(\sO^{an}_U[s]f^s)$ is free over $\C[s]$), we obtain $b\in B^{-e_i}_{f_W,0}\C[s]$.

The first equality in \eqref{eq:intersection-over-edges} follows from Lemma \ref{lem:stalks}(1), because $X=\bigsqcup_{W\in L(D)}W^\circ$; the second holds because in the unique factorization domain $\C[s]$ an intersection of principal ideals is generated by the least common multiple of the generators. Note that $D^W$ is free by Theorem \ref{thm:free-facts}(3) and that $B^{-e_i}_{f_W}=B^{-e_i}_{f_W,0}$ by Lemma \ref{lem:stalks}(2).
\end{proof}

The next lemma will be used to exclude components of $Z(B^{-e_i}_f)$ which are invariant under the translation $s\mapsto s+e_i$. Its proof is a noetherianity argument based on Theorem \ref{thm:Wu-jstar}.

\begin{lemma}\label{lem:ACC}
Let $f$ be an $r$-tuple of polynomials on $X=\C^n$, let $h\in\{1,\dots,r\}$ and let $\mathfrak q\subseteq\C[s]$ be a prime ideal such that $\tau_{e_h}(\mathfrak q)=\mathfrak q$, where $\tau_{e_h}(b)(s)=b(s-e_h)$. Then for every $c\in\Z^r$ and every $a\in\Z$,
\[
\sD_X[s]_{\mathfrak q}f^{s+c+ae_h}=\sD_X[s]_{\mathfrak q}f^{s+c}.
\]
In particular $(M^{-e_h}_f)_{\mathfrak q}=0$, and $\mathfrak q\not\supseteq B^{-e_h}_f$. If $Z(B^{-e_h}_f)$ is a union of hyperplanes $(L\cdot s+\alpha=0)$, then $L_h\neq 0$ for each of them.
\end{lemma}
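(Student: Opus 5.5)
Write $R=\C[s]_{\mathfrak q}$ and $N_c:=\sD_X[s]_{\mathfrak q}f^{s+c}\subseteq j_*(\sO_U[s]f^s)_{\mathfrak q}$. Since $\tau_{e_h}(\mathfrak q)=\mathfrak q$, the automorphism $\tau_{e_h}$ of $\C[s]$ extends to $R$. The translation $\sigma_{e_h}$ of \eqref{eq:translation} is $\tau_{e_h}$-semilinear, so it extends to a semilinear bijection of $j_*(\sO_U[s]f^s)_{\mathfrak q}$ sending $N_c$ onto $N_{c-e_h}$. Because $f_h$ is a polynomial, we have the chain
\[
\cdots\subseteq N_{c+e_h}\subseteq N_c\subseteq N_{c-e_h}\subseteq\cdots .
\]
The plan is to show this chain stabilizes in both directions, and then to use the translation symmetry to force all its terms to be equal.

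First I use Theorem \ref{thm:Wu-jstar}. The module $\mathcal J:=j_*(\sO_U[s]f^s)_{\mathfrak q}$ equals $\sD_X[s]_{\mathfrak q}f^{s-k_r}$ for $k\gg0$, so it is a coherent, hence noetherian, module over the noetherian ring $\sD_X[s]_{\mathfrak q}$ (Weyl algebra tensored with $R$). The increasing chain $N_{c-ae_h}$, $a\to\infty$, therefore stabilizes. So there is $a_0$ with $N_{c-ae_h}=N_{c-a_0e_h}$ for all $a\geq a_0$.

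Next I transport this by translation. Applying $\sigma_{e_h}^{-b}$ for $b\geq 0$ maps $N_{c-ae_h}$ onto $N_{c-(a-b)e_h}$. Hence the equality $N_{c-(a_0+1)e_h}=N_{c-a_0e_h}$ becomes $N_{c-(a_0+1-b)e_h}=N_{c-(a_0-b)e_h}$ for every $b$. Thus every consecutive inclusion in the chain is an equality, which gives $N_{c+ae_h}=N_c$ for all $a\in\Z$. The only point needing care is that $\sigma_{e_h}$ really preserves the localized modules. This holds because $\sigma(b(s)^{-1}m)=\tau(b)^{-1}\sigma(m)$ and $\tau$ preserves $\C[s]\setminus\mathfrak q$. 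I expect this compatibility, together with the noetherianity of the localized ambient module, to be the main (though mild) obstacle.

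For the consequences: taking $c=0$, $a=1$ gives $(M^{-e_h}_f)_{\mathfrak q}=N_0/N_{e_h}=0$, since localization is exact. Since $M^{-e_h}_f$ is relative holonomic (Theorem \ref{thm:Wu-basic}), Lemma \ref{lem:Z=supp} gives $Z(B^{-e_h}_f)=\supp_R$, so $\mathfrak q\notin Z(B^{-e_h}_f)$, i.e. $\mathfrak q\not\supseteq B^{-e_h}_f$. Finally, suppose $(L\cdot s+\alpha=0)$ is a component with $L_h=0$. Then its prime $\mathfrak q=(L\cdot s+\alpha)$ satisfies $\tau_{e_h}(L\cdot s+\alpha)=L\cdot s+\alpha$, so $\tau_{e_h}(\mathfrak q)=\mathfrak q$ while $\mathfrak q\supseteq B^{-e_h}_f$, a contradiction.
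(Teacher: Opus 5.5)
Your proof is correct and is essentially the paper's argument. Theorem \ref{thm:Wu-jstar} makes $j_*(\sO_U[s]f^s)_{\mathfrak q}$ a noetherian $\sD_X[s]_{\mathfrak q}$-module, so the increasing chain stabilizes; the $\tau_{e_h}$-invariance of $\mathfrak q$ then lets translation propagate one equality to every step, and the consequences follow from Lemma \ref{lem:Z=supp} as you say. The only difference is cosmetic: the paper phrases the propagation through the twisted modules $M^{(a+1)e_h,ae_h}_f$ rather than applying $\sigma_{e_h}$ directly to the localized chain, and your $\supp_R$ should read $\supp_{\C[s]}(M^{-e_h}_f)$.
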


\begin{proof}
By Theorem \ref{thm:Wu-jstar} the module $j_*(\sO_U[s]f^s)_{\mathfrak q}$ is a coherent module over the noetherian ring $\sD_X[s]_{\mathfrak q}$, and it contains the increasing chain of submodules $\sD_X[s]_{\mathfrak q}f^{s-ae_h}$, $a\in\N$. Hence $\sD_X[s]_{\mathfrak q}f^{s-ae_h}=\sD_X[s]_{\mathfrak q}f^{s-(a+1)e_h}$, that is, $(M^{(a+1)e_h,ae_h}_f)_{\mathfrak q}=0$, for all $a\gg 0$. By translation, $M^{(a+1)e_h,ae_h}_f$ is $M^{-e_h}_f$ with the $\C[s]$-structure twisted by $\tau_{(a+1)e_h}$; since $\tau_{e_h}$ preserves $\mathfrak q$, we get $(M^{-e_h}_f)_{\mathfrak q}=0$, and then $(M^{c+e_h,c}_f)_{\mathfrak q}=0$ for every $c\in\Z^r$ by the same argument. This proves the displayed equality. Finally, $(M^{-e_h}_f)_{\mathfrak q}=0$ means $\mathfrak q\notin\supp_{\C[s]}(M^{-e_h}_f)=Z(B^{-e_h}_f)$ (Lemma \ref{lem:Z=supp}), and a hyperplane $(L\cdot s+\alpha=0)$ is invariant under $s\mapsto s+e_h$ if and only if $L_h=0$.
\end{proof}

\begin{corollary}\label{cor:invertible}
In the situation of Lemma \ref{lem:ACC}, $f_h$ acts invertibly on $\sD_X[s]_{\mathfrak q}f^{s+c}$ for every $c\in\Z^r$; in other words, $\sD_X[s]_{\mathfrak q}f^{s+c}=\big(\sD_X[s]f^{s+c}\big)(*D_h)_{\mathfrak q}$, where $D_h=(f_h=0)$ and $(*D_h)$ denotes the algebraic localization along $D_h$.
\end{corollary}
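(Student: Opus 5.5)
The corollary follows from the displayed equality in Lemma \ref{lem:ACC} applied with $a=-1$, together with the fact that multiplication by $f_h$ is injective on the ambient module. Fix $c\in\Z^r$ and put $M:=\sD_X[s]_{\mathfrak q}f^{s+c}$, a submodule of $j_*(\sO_U[s]f^s)_{\mathfrak q}$.

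First, $f_h$ maps $M$ into itself. Indeed, $f_h\cdot Pf^{s+c}=(f_hP)f^{s+c}\in M$, since $f_h$ is an element of $\sD_X[s]$ acting by left multiplication.

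Second, multiplication by $f_h$ is injective on $M$. On $j_*(\sO_U[s]f^s)$, multiplication by $f_h$ is injective, because $f_h$ is a unit on $U$. Localization is exact, so injectivity persists after localizing at $\mathfrak q$.

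Third, multiplication by $f_h$ is surjective onto $M$. By Lemma \ref{lem:ACC} with $a=1$, we have $M=\sD_X[s]_{\mathfrak q}f^{s+c+e_h}$. Every element of $M$ can therefore be written as $Pf^{s+c+e_h}$ for some $P\in\sD_X[s]_{\mathfrak q}$. The key observation is that $Pf^{s+c+e_h}=f_h\cdot Q$ with $Q:=P'f^{s+c}\in M$, where $P'$ is the operator obtained by conjugating $P$ through $f_h$, i.e. $P'=f_h^{-1}Pf_h$. This $P'$ is only a differential operator with poles along $D_h$, so it need not lie in $\sD_X[s]_{\mathfrak q}$; this is the one step that needs care. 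I would handle it differently, using that $M$ is stable under $f_h^{-1}$. Since $f^{s+c-e_h}=f_h^{-1}f^{s+c}$, Lemma \ref{lem:ACC} with $a=-1$ gives $f_h^{-1}f^{s+c}\in M$. Applying the same lemma with $c$ replaced by $c+ke_h$, every $f^{s+c+ke_h}$ lies in $M$ for all $k\in\Z$. For a general element $Pf^{s+c}$, I would show $f_h^{-1}Pf^{s+c}\in M$ by induction on the order of $P$, using the commutation relation
\[
f_h^{-1}\partial=\partial f_h^{-1}+\partial(f_h)f_h^{-2}.
\]
This moves every negative power of $f_h$ to the right, where $f_h^{-k}f^{s+c}=f^{s+c-ke_h}\in\sD_X[s]_{\mathfrak q}f^{s+c}$. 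Hence $f_h^{-1}M\subseteq M$, which is exactly surjectivity of $f_h$ on $M$.

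So $f_h$ acts invertibly on $M$, and $M$ is stable under $f_h^{-1}$. Since $M\subseteq(\sD_X[s]f^{s+c})(*D_h)_{\mathfrak q}$ and $M$ contains $\sD_X[s]_{\mathfrak q}f^{s+c}$ and is stable under $f_h^{-1}$, the two modules coincide. The only real obstacle is the commutation bookkeeping in the third step, and it is routine.
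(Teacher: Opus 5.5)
Your argument is correct and is essentially the paper's proof. Your induction moving $f_h^{-1}$ to the right is exactly the paper's observation that $\sO_X[1/f_h]\cdot\sD_X=\sD_X\cdot\sO_X[1/f_h]$; after it, Lemma \ref{lem:ACC} collapses every $\sD_X[s]_{\mathfrak q}f^{s+c-ke_h}$ into $\sD_X[s]_{\mathfrak q}f^{s+c}$, and your final identification is cleanest stated as $(\sD_X[s]f^{s+c})(*D_h)_{\mathfrak q}=\bigcup_{k\geq 0}f_h^{-k}M$ inside the $f_h$-local ambient module, which equals $M$ by your stability result (the detour via $P'=f_h^{-1}Pf_h$ is not needed).
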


\begin{proof}
Let $\sM$ be a coherent $\sD_X$-module generated by an element $m$. Since $\sO_X[1/f_h]\cdot\sD_X=\sD_X\cdot\sO_X[1/f_h]$ inside $\sD_X(*D_h)$, the localization $\sM(*D_h)=\sO_X[1/f_h]\otimes_{\sO_X}\sM$ is generated over $\sD_X$ by the elements $f_h^{-a}m$, $a\in\N$. For $\sM=\sD_X[s]f^{s+c}$ and $m=f^{s+c}$ this gives
\[
\sM(*D_h)=\bigcup_{a\in\N}\sD_X[s]f^{s+c-ae_h},
\]
and after localizing at $\mathfrak q$ all the terms of this union coincide by Lemma \ref{lem:ACC}.
\end{proof}

\section{The local multiplicity theorem}\label{sec:local}

Throughout this section $D$ is an essential, irreducible, free central hyperplane arrangement in $X=\C^n$ with $r$ hyperplanes, complete factorization $f=(f_1,\dots,f_r)$ and exponents $(1,d_2,\dots,d_n)$; by Lemma \ref{lem:exponent-one} exactly one exponent equals $1$, and by Saito's criterion $r=1+\sum_{a=2}^nd_a$, so that $\rho=r-2n+1=\sum_{a=2}^n(d_a-2)\geq 0$ as in \eqref{eq:rho}. By simultaneously relabeling the hyperplanes $D_j$, the linear forms $f_j$ and the parameters $s_j$, it suffices to prove Theorem \ref{thm:main-local} for $i=1$, which we assume from now on. We use the notation $\lambda_\nu$, $\Gamma_\nu$ of \eqref{eq:lambda-nu} and put
\begin{equation}\label{eq:mu-h}
\mu_\nu:=m_{\Gamma_\nu}(M^{-e_1}_f),\qquad h_\nu:=[t^\nu]\prod_{a=2}^n\big(1+t+\dots+t^{d_a-2}\big)\qquad(\nu\in\Z).
\end{equation}
Since every factor has positive coefficients in all degrees from $0$ to its degree, $h_\nu>0$ for $0\leq\nu\leq\rho$ and $h_\nu=0$ otherwise. Theorem \ref{thm:main-local} is the following statement.

\begin{theorem}\label{thm:local}
With notation as above, $\mu_\nu=h_\nu$ for every $\nu\in\Z$. Moreover, for $\alpha\in\C$ the linear form $\one_r\cdot s+\alpha=\sum_{j=1}^rs_j+\alpha$ divides $b_{D,1}$ if and only if $\alpha=n+\nu$ with $0\leq\nu\leq\rho$.
\end{theorem}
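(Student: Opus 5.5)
The plan is the one sketched in the introduction: prove $\mu_\nu\le h_\nu$ for each $\nu$, then force equality from the total multiplicity.

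\emph{Step 1 (reduction to linear algebra).} Fix $\nu$ and localize $\C[s]$ at the prime $\mathfrak q_\nu=(\lambda_\nu)$. Put $\K=\Frac(\C[s]/\mathfrak q_\nu)$, or work over $R=\C[s]_{\mathfrak q_\nu}$ and pass to the residue field.

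First we show that $(M^{-e_1}_f)_{\mathfrak q_\nu}$ is supported at the origin. At a point $x\neq 0$ lying on an edge $W\neq\{0\}$, the local Bernstein-Sato ideal only involves the linear forms $\sum_{j\in J(W',f)}s_j+\cdots$ for edges $W'$ through $x$, by Lemma~\ref{lem:localization-edge} and induction. For $W'\neq\{0\}$ these forms are not proportional to $\lambda_\nu$, since $J(W',f)\neq\{1,\dots,r\}$ by essentiality. Hence the localized stalk vanishes away from $0$.

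By Lemma~\ref{lem:kashiwara}, the fiber over the generic point of $(\lambda_\nu=0)$ is then a direct sum of copies of $\delta_{0,\K}$. Its number of copies equals $\dim_\K\Hom(M^{-e_1}_f\otimes\K,\delta_{0,\K})$, and this length at the generic point of $\Gamma_\nu$ is $\mu_\nu$. (Lengths over $R_{\mathfrak q}$ versus dimensions of fibers need care; I would use that $\mu_\nu$ is the length of the $R_{\mathfrak q}$-module $V$ and bound it by the fiber dimension via a filtration argument.)

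By Lemma~\ref{lem:cyclic} and Theorem~\ref{thm:log-ann}, a homomorphism $M^{-e_1}_f\to\delta_{0,\K}$ is an element $u=\sum_\beta c_\beta\partial^\beta\delta_0$ killed by $\wt E,\wt\theta_2,\dots,\wt\theta_n$ and by right multiplication with $f_1$. More precisely, $u$ is an element of the right module annihilated by these; working with the transpose, $u$ is a solution of a linear system in the $c_\beta$.

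We use a Saito basis $E,\theta_2,\dots,\theta_n$ with $\theta_a(f_1)=0$, which exists by Theorem~\ref{thm:Ziegler}(2) after subtracting multiples of $E$. The Euler operator $\wt E=E-\sum_j s_j$ acts on $\partial^\beta\delta_0$ by a scalar depending on $|\beta|$. Modulo $\lambda_\nu$ this forces $u$ to be homogeneous of degree $|\beta|=\nu$ in the $\partial$'s. The condition involving $f_1$ lets us work with $u$ modulo $f_1$, that is, on the hyperplane $H=D_1$: choose coordinates with $f_1=x_1$ and pass to $\C[x_2,\dots,x_n]$.

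\emph{Step 2 (upper bound).} The conditions $\wt\theta_a u=0$ for $a\ge2$ become a linear system on the degree-$\nu$ component. Its leading part, in the direction of $s$ where the $s_j$ are large generic, is the transpose of multiplication by $g_a=\sum_j s_j\,\ol{\theta_a(f_j)/f_j}$, a polynomial of degree $d_a-1$ on $H$.

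By Ziegler's theorem the restrictions $\ol\theta_a$ form a basis of the multiarrangement derivations. The determinant criterion (Theorem~\ref{thm:Ziegler}(1)) should then show that for generic $s$ on $(\lambda_\nu=0)$ the $g_a$ form a regular sequence in $\C[x_2,\dots,x_n]$. Semicontinuity of the kernel dimension then bounds the solution space in degree $\nu$ by $\dim(\C[x_2,\dots,x_n]/(g_2,\dots,g_n))_\nu=h_\nu$, which gives $\mu_\nu\le h_\nu$.

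This is the main obstacle: proving the regular-sequence property from Ziegler's determinant formula, and making the degeneration to the leading part rigorous over $\K$. I would first try a specialization $s_j=t\,c_j$ with $t\to\infty$, arguing that the associated graded of the system is the transposed multiplication map.

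\emph{Step 3 (equality).} Choose $k\gg0$. By Theorem~\ref{thm:Wu-basic}(3), the module $M^{k_r,-k_r}_f$ has a filtration by translates of the $M^{-e_j}_f$. Only the translates of $M^{-e_j}_f$ contribute components $T^*_{\{0\}}X\times(\lambda_0=0)$, each through its own $\Gamma_\nu$ with the shift matching. By translation, the multiplicity of $T^*_{\{0\}}X\times(\lambda_0=0)$ is a sum of $\mu$'s, and one should arrange the chain so that it equals exactly $\sum_\nu\mu_\nu$ for $i=1$. One option is to use the chain $k_r\ge k_r-e_1\ge\cdots$ and the symmetry among the $i$; this bookkeeping should be checked.

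Theorem~\ref{thm:Wu-dense} and Terao's factorization (Theorem~\ref{thm:free-facts}(4)) give this total as $\prod_{a\ge2}(d_a-1)=\sum_\nu h_\nu$. Since $\mu_\nu\le h_\nu$ termwise and the totals agree, $\mu_\nu=h_\nu$ for all $\nu$.

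Finally, for the divisibility statement: $(\sum_j s_j+\alpha)\mid b_{D,1}$ iff $T^*_{\{0\}}X\times(\sum_j s_j+\alpha=0)$ lies in $\Chrel(M^{-e_1}_f)$, by Lemma~\ref{lem:Z=supp}. Away from the origin only the other edges contribute, and they give other linear forms. So the condition holds iff $\alpha=n+\nu$ with $\mu_\nu=h_\nu>0$, that is, iff $0\le\nu\le\rho$.
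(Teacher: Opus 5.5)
Your outline is the paper's: localize at $(\lambda_\nu)$, apply Kashiwara, bound the kernel by the leading term $-G_\nu^\vee$ along a direction inside $(\lambda_\nu=0)$, then squeeze against the total multiplicity from Theorem \ref{thm:Wu-dense} and Terao. As written, however, one step fails and the key lemma is missing.

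\textbf{The step that fails: identifying $\mu_\nu$ with a fiber dimension.} Put $R=\C[s]_{\mathfrak q_\nu}$ and write $(M^{-e_1}_f)_{\mathfrak q_\nu}\simeq\Delta\otimes_\C V$; then indeed $\mu_\nu=\ell_R(V)$. Your linear system over $\K$, however, computes $\dim_\K\Hom(M\otimes_R\K,\delta_{0,\K})=\dim_\K V/\lambda_\nu V$. For a finite-length module over a DVR, $\ell_R(V)\geq\dim_\K V/\lambda_\nu V$ (take $V=R/\lambda_\nu^2$). So no filtration argument bounds the length by the fiber dimension. Your Step 2 would only bound $\dim_\K V/\lambda_\nu V$, and the squeeze in Step 3 then proves nothing.

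The missing fact is $\lambda_\nu\cdot(M^{-e_1}_f)_{\mathfrak q_\nu}=0$. The paper gets it from Bath's theorem that $B^{-e_1}_f$ is radical; see \eqref{eq:zero-or-killed} and \eqref{eq:m-equals-mu}. Your own Euler computation, done over $R$ instead of $\K$, also gives it. Write the class of $1$ as $\sum_\beta\partial^\beta\delta_0\otimes v_\beta$. Then $\wt E\cdot[1]=0$ says $\lambda_{|\beta|}v_\beta=0$ for every $\beta$. Since $\lambda_q=\lambda_\nu+(q-\nu)$ is a unit in $R$ for $q\neq\nu$, $\lambda_\nu$ kills the generator, hence the whole cyclic module.

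\textbf{The missing lemma: the regular-sequence property.} This is the heart of the upper bound, and you flag it yourself. Ziegler's determinant formula is not what proves it; the difficulty is that one weight vector $\omega$ must work for all the $g_a$ at once. The paper's argument (Lemmas \ref{lem:Ziegler-values}, \ref{lem:rank}, \ref{lem:generic-regular}) runs as follows.
\begin{enumerate}
\item At each $0\neq x\in H$, the values $\ol\theta_2(x),\dots,\ol\theta_n(x)$ span the carrier flat $Y_x$. This holds because the localized Ziegler module contains the constant fields along $Y_x$ and the $\ol\theta_a$ remain a basis after localization.
\item On weights supported off the hyperplanes through $x$, the map $\omega\mapsto(g_a(\omega)(x))_a$ factors as a surjection onto $T^*_xY_x$ (by essentiality of $D|_H$) followed by an injection. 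Hence its rank is at least $\dim Y_x$.
\item A dimension count on the incidence variety over each stratum $S_Y$, which has dimension $\dim Y-1$, shows that the bad weights form a proper closed subset.
\end{enumerate}

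\textbf{The chain in Step 3.} Along any path from $k_r$ to $-k_r$, the sum $\one_r\cdot a^{(m)}$ drops by exactly one at each step, and the step starting at sum $\nu$ contributes $\mu_{c_m,\nu}$. A chain that begins with the $e_1$-steps uses them at sums $kr,\dots,k(r-2)+1$. These are all $>k$, since $r\geq 2n-1\geq 3$, so there $\mu_{1,\nu}=0$ and nothing is detected. You must schedule the $2k$ steps in direction $e_1$ while the sum runs through $(-k,k]$, as the paper's three-block path does. Appealing to ``symmetry among the $i$'' is not available beforehand, since it is part of the conclusion.
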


When $n=1$, irreducibility forces $r=1$ and $f=(x_1)$ after a change of coordinates; then $M^{-e_1}_f=\sD_X[s_1]/\sD_X[s_1](x_1\partial_{x_1}-s_1,x_1)=\Delta\otimes_\C\C[s_1]/(s_1+1)$, whose relative characteristic cycle is $[T^*_{\{0\}}X\times(s_1+1=0)]$, and $B^{-e_1}_f=(s_1+1)$. Since $\rho=0$ and $h_0=1$ (an empty product), the theorem holds in this case. \emph{We assume $n\geq 2$ for the rest of this section.}

\subsection{An adapted Saito basis}\label{sec:adapted-basis}
Choose linear coordinates $x_1,\dots,x_n$ on $X$ with $x_1=f_1$. By Lemma \ref{lem:exponent-one} there is a homogeneous basis $E,\eta_2,\dots,\eta_n$ of $\Der(-\log D)$ with $\deg\eta_a=d_a\geq 2$. Since $\eta_a$ is logarithmic along $D_1$, the quotient $\eta_a(x_1)/x_1$ is a polynomial, and we replace $\eta_a$ by
\begin{equation}\label{eq:theta-a}
\theta_a:=\eta_a-\frac{\eta_a(x_1)}{x_1}\,E\qquad(2\leq a\leq n).
\end{equation}
This triangular change of basis has determinant one, so $E,\theta_2,\dots,\theta_n$ is again a homogeneous basis with $\deg\theta_a=d_a$, and $\theta_a(x_1)=0$. Since the coefficient of $\partial_{x_1}$ in $\theta_a$ is $\theta_a(x_1)$, each $\theta_a$ has no $\partial_{x_1}$-component. We write
\begin{equation}\label{eq:theta-coeff}
\theta_a=\sum_{\alpha=2}^n p_{a\alpha}(x)\,\partial_{x_\alpha},\qquad \varrho_{aj}:=\frac{\theta_a(f_j)}{f_j}\qquad(2\leq a\leq n,\ 1\leq j\leq r);
\end{equation}
thus $\varrho_{a1}=0$, and $p_{a\alpha}$, $\varrho_{aj}$ are homogeneous of degrees $d_a$ and $d_a-1$. Let $H=D_1=(x_1=0)$, and denote by a bar the restriction to $H$. By Theorem \ref{thm:Ziegler}(2),
\begin{equation}\label{eq:Ziegler-basis}
\ol\theta_2,\dots,\ol\theta_n\ \text{ is a homogeneous basis of } \Der(-\log(D|_H,m_H)),\quad \ol\theta_a(\ol f_j)=\ol\varrho_{aj}\ol f_j\quad(j\neq 1).
\end{equation}

\subsection{A generic complete intersection for the Ziegler restriction}\label{sec:generic-ci}
This subsection constructs the Artinian complete intersection whose Hilbert function is the right-hand side of \eqref{eq:main-local}. Let
\[
R_H:=\C[H]\simeq\C[x_2,\dots,x_n],\qquad H_j:=D_j\cap H=(\ol f_j=0)\quad(j\neq 1);
\]
different indices $j$ may give the same hyperplane $H_j$ of $H$, and these repetitions are exactly what the Ziegler multiplicities record. Let
\[
\Omega:=\{\omega=(\omega_j)_{j\neq 1}\mid\omega_j\in\C\}\simeq\C^{r-1}
\]
be the \emph{weight space}, and for $\omega\in\Omega$ define
\begin{equation}\label{eq:g-a}
g_a(\omega):=\sum_{j\neq 1}\omega_j\,\ol\varrho_{aj}\in(R_H)_{d_a-1}\qquad(2\leq a\leq n),
\end{equation}
where $(R_H)_q$ denotes the homogeneous part of degree $q$. The same weight vector is used for every $a$, which is why the genericity statement below is not immediate. The idea is the following: if a point $[x]\in\PP(H)$ moves in a stratum whose carrier flat has dimension $\ell$, it has $\ell-1$ degrees of freedom, while the vanishing of all $g_a(\omega)(x)$ imposes at least $\ell$ independent linear conditions on $\omega$; hence a general weight is bad nowhere.

A \emph{flat} of the simple arrangement $D|_H$ is an intersection of some of its hyperplanes (the empty intersection being $H$). For $0\neq x\in H$ the \emph{carrier flat} of $x$ is
\begin{equation}\label{eq:carrier}
Y_x:=\bigcap_{H'\in D|_H,\ x\in H'}H',\qquad \ell_x:=\dim Y_x ,
\end{equation}
and for a nonzero flat $Y$ the \emph{stratum} of $Y$ is
\begin{equation}\label{eq:stratum}
S_Y:=\{[x]\in\PP(H)\mid Y_x=Y\}=\PP(Y)\setminus\bigcup_{H'\in D|_H,\ Y\not\subseteq H'}\PP(Y\cap H'),
\end{equation}
a Zariski open subset of $\PP(Y)$, so that $\dim S_Y=\dim Y-1$. The finitely many strata $S_Y$ are pairwise disjoint and cover $\PP(H)$. The restricted arrangement is essential:
\begin{equation}\label{eq:restriction-essential}
\bigcap_{H'\in D|_H}H'=\bigcap_{j\neq 1}(H\cap D_j)=\bigcap_{j=1}^rD_j=\{0\},
\end{equation}
because $H=D_1$ and $D$ is essential.

\begin{lemma}\label{lem:Ziegler-values}
For every $0\neq x\in H$, the evaluation at $x$ of the stalk at $x$ of the localized Ziegler module satisfies $\ev_x\big(\Der(-\log(D|_H,m_H))_x\big)=T_xY_x\subseteq T_xH\simeq H$. Consequently
\begin{equation}\label{eq:span-theta}
\Span_\C\{\ol\theta_2(x),\dots,\ol\theta_n(x)\}=Y_x .
\end{equation}
\end{lemma}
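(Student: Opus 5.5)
The plan is to identify the stalk of the Ziegler module at $x$ with the logarithmic derivations of the localized multiarrangement of hyperplanes through $x$. Its values at $x$ can then be read off directly. We work in the local ring $\sO_{H,x}$; either the algebraic localization of $R_H$ at the maximal ideal of $x$ or the analytic stalk will do, since only evaluation at $x$ is used. For $H'\in D|_H$ with $x\notin H'$, the form $\beta_{H'}$ is a unit in $\sO_{H,x}$. So the condition $\eta(\beta_{H'})\in\beta_{H'}^{m_H(H')}\sO_{H,x}$ is vacuous, and $\Der(-\log(D|_H,m_H))_x$ is the set of derivations $\eta$ of $\sO_{H,x}$ with $\eta(\beta_{H'})\in\beta_{H'}^{m_H(H')}\sO_{H,x}$ for all $H'\ni x$. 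These are exactly the $H'$ containing $Y_x$.

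\emph{Inclusion $\subseteq$.} Let $\eta$ lie in the stalk and let $H'\ni x$. Every Ziegler multiplicity satisfies $m_H(H')\geq 1$, and $\beta_{H'}(x)=0$, so $\eta(\beta_{H'})$ vanishes at $x$. Since $\beta_{H'}$ is linear, $\eta(\beta_{H'})(x)=\beta_{H'}(\eta(x))$, where $\eta(x)\in T_xH\simeq H$ is the value of the vector field at $x$. Hence $\eta(x)\in\ker\beta_{H'}=H'$ for every $H'\ni x$, that is, $\eta(x)\in Y_x$.

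\emph{Inclusion $\supseteq$.} For $v\in Y_x$, the constant vector field $\partial_v$ satisfies $\partial_v(\beta_{H'})=\beta_{H'}(v)=0$ for every $H'\supseteq Y_x$. So $\partial_v$ lies in the localized module, and its value at $x$ is $v$. This gives $\ev_x=T_xY_x$.

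\emph{Consequence \eqref{eq:span-theta}.} By \eqref{eq:Ziegler-basis}, the $\ol\theta_a$ form a basis of the graded $R_H$-module $\Der(-\log(D|_H,m_H))$. Localization commutes with forming this module, since the defining conditions are membership conditions in ideals. Hence the $\ol\theta_a$ generate the stalk over $\sO_{H,x}$. Evaluation satisfies $\ev_x(\sum c_a\ol\theta_a)=\sum c_a(x)\ol\theta_a(x)$, so $\ev_x$ of the stalk is $\Span_\C\{\ol\theta_a(x)\}$, which therefore equals $Y_x$.

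I expect no real obstacle. The only points needing care are that $m_H(H')\geq1$ for every $H'\in D|_H$, which holds by definition, and that the stalk of the module is the module of logarithmic derivations of the localized multiarrangement, which holds because localization is exact and the non-incident $\beta_{H'}$ become units.
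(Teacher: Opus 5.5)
Your proposal is correct and follows the paper's proof essentially step for step. Exactness of localization makes the conditions along hyperplanes not through $x$ vacuous, and the vanishing of $\eta(\beta_{H'})$ at $x$ gives $\subseteq$. Constant fields $\partial_v$ with $v\in Y_x$ give $\supseteq$, and the localized basis $\ol\theta_2,\dots,\ol\theta_n$ gives the span identity. Your explicit check that $m_H(H')\geq 1$ is a detail the paper uses tacitly.
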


\begin{proof}
Localization is exact, so the stalk at $x$ consists of the derivations over $\sO_{H,x}$ satisfying the divisibility conditions defining $\Der(-\log(D|_H,m_H))$. Let $\eta$ be such a derivation. If $H'=(\beta_{H'}=0)\in D|_H$ contains $x$, then $\eta(\beta_{H'})\in\beta_{H'}^{m_H(H')}\sO_{H,x}$ vanishes at $x$, so $\eta(x)\in T_xH'$; hence $\eta(x)\in T_xY_x$. Conversely let $v\in Y_x$ and let $\partial_v$ be the constant vector field in the direction $v$. If $x\in H'$ then $v\in Y_x\subseteq H'$ and $\partial_v(\beta_{H'})=\beta_{H'}(v)=0$; if $x\notin H'$ then $\beta_{H'}$ is a unit at $x$ and the divisibility condition along $H'$ is automatic. Thus $\partial_v$ lies in the stalk, which proves the first assertion. Finally, $\ol\theta_2,\dots,\ol\theta_n$ remain a basis after localization by \eqref{eq:Ziegler-basis}, so the value at $x$ of every local derivation is a complex linear combination of $\ol\theta_2(x),\dots,\ol\theta_n(x)$, which gives \eqref{eq:span-theta}.
\end{proof}

Fix $0\neq x\in H$. Evaluation of the $g_a$ at $x$ defines a linear map in the weight variables
\begin{equation}\label{eq:Psi-x}
\Psi_x\colon\Omega\to\C^{n-1},\qquad\omega\mapsto\big(g_2(\omega)(x),\dots,g_n(\omega)(x)\big).
\end{equation}
Replacing $x$ by $cx$, $c\in\C^*$, multiplies the $a$-th component by $c^{d_a-1}$, so $\ker\Psi_x$ and $\rank\Psi_x$ only depend on $[x]\in\PP(H)$. Put
\[
J^0_x:=\{j\neq 1\mid \ol f_j(x)=0\},\qquad J^\times_x:=\{j\neq 1\mid\ol f_j(x)\neq 0\},
\]
\[
\Omega_x:=\{\omega\in\Omega\mid\omega_j=0\text{ for } j\in J^0_x\}.
\]
For $j\in J^\times_x$ the covector $\alpha_{j,x}:=(d\ol f_j/\ol f_j(x))|_{T_xY_x}\in T^*_xY_x$ is well defined; it is the normal covector of $H_j$ restricted to $Y_x$ and normalized by $\ol f_j(x)$. Clearly $\rank\Psi_x\geq\rank(\Psi_x|_{\Omega_x})$. Define
\begin{align*}
\mathsf N_x\colon\Omega_x\to T^*_xY_x,\qquad&\mathsf N_x(\omega)=\sum_{j\in J^\times_x}\omega_j\alpha_{j,x},\\
\mathsf E_x\colon T^*_xY_x\to\C^{n-1},\qquad&\mathsf E_x(\alpha)=\big(\alpha(\ol\theta_2(x)),\dots,\alpha(\ol\theta_n(x))\big).
\end{align*}

\begin{lemma}\label{lem:rank}
The map $\mathsf N_x$ is surjective, the map $\mathsf E_x$ is injective, and $\Psi_x|_{\Omega_x}=\mathsf E_x\circ\mathsf N_x$. Consequently $\rank\Psi_x\geq\ell_x=\dim Y_x$.
\end{lemma}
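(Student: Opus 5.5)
We verify the three assertions directly from the definitions, using Lemma \ref{lem:Ziegler-values} as the only real input.

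\emph{Surjectivity of $\mathsf N_x$.} Since $\Omega_x$ contains all vectors supported on $J^\times_x$, the image of $\mathsf N_x$ is the span of the covectors $\alpha_{j,x}$, $j\in J^\times_x$. These are the restrictions to $Y_x$ of the linear forms $\ol f_j$, rescaled by nonzero constants. Suppose $v\in Y_x$ is killed by all of them. Then $\ol f_j(v)=0$ for every $j\in J^\times_x$. For $j\in J^0_x$ we also have $\ol f_j(v)=0$, because $v\in Y_x\subseteq H_j$. Hence $v\in\bigcap_{j\neq1}H_j=\{0\}$ by \eqref{eq:restriction-essential}. So the restrictions span $T^*_xY_x$.

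\emph{Injectivity of $\mathsf E_x$.} By \eqref{eq:span-theta}, the vectors $\ol\theta_a(x)$ span $Y_x=T_xY_x$. A covector on $Y_x$ that vanishes on all of them is therefore zero.

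\emph{The factorization $\Psi_x|_{\Omega_x}=\mathsf E_x\circ\mathsf N_x$.} Take $\omega\in\Omega_x$. The $a$-th component of $\Psi_x(\omega)$ is
\[
\sum_{j\in J^\times_x}\omega_j\,\ol\varrho_{aj}(x).
\]
By \eqref{eq:Ziegler-basis}, $\ol\theta_a(\ol f_j)=\ol\varrho_{aj}\ol f_j$. Evaluating at $x$ and dividing by $\ol f_j(x)\neq0$ gives
\[
\ol\varrho_{aj}(x)=d\ol f_j(\ol\theta_a(x))/\ol f_j(x).
\]
Since $\ol\theta_a(x)\in Y_x$ by \eqref{eq:span-theta}, this equals $\alpha_{j,x}(\ol\theta_a(x))$. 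Summing over $j\in J^\times_x$ gives exactly the $a$-th component of $\mathsf E_x(\mathsf N_x(\omega))$.

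\emph{The rank bound.} A surjection followed by an injection has rank $\dim T^*_xY_x=\ell_x$. Hence $\rank\Psi_x\ge\rank(\Psi_x|_{\Omega_x})=\ell_x$.

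No step is a serious obstacle. The one point needing care is checking that $\ol\theta_a(x)$ lies in $Y_x$, so that $\alpha_{j,x}$, defined only on $T_xY_x$, can be evaluated on it. This is supplied by Lemma \ref{lem:Ziegler-values}.
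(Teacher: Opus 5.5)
Your proof is correct and follows the paper's argument essentially step for step. You use the annihilator argument together with the essentiality \eqref{eq:restriction-essential} of the restriction to get surjectivity of $\mathsf N_x$, the spanning statement of Lemma~\ref{lem:Ziegler-values} to get injectivity of $\mathsf E_x$, and the identity $\ol\theta_a(\ol f_j)=\ol\varrho_{aj}\ol f_j$ evaluated at $x$ to get the factorization $\Psi_x|_{\Omega_x}=\mathsf E_x\circ\mathsf N_x$; this is exactly the paper's route.
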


\begin{proof}
Let $v\in Y_x$ be annihilated by every covector in the image of $\mathsf N_x$. Taking one weight coordinate at a time we get $d\ol f_j(v)=\ol f_j(v)=0$ for all $j\in J^\times_x$; for $j\in J^0_x$ we have $v\in Y_x\subseteq H_j$, so $\ol f_j(v)=0$ as well. Hence $v\in\bigcap_{j\neq 1}H_j=\{0\}$ by \eqref{eq:restriction-essential}. Thus the annihilator of $\im\mathsf N_x$ in $T_xY_x$ is zero and $\mathsf N_x$ is surjective (repeated indices give equal covectors and do not affect the span). By Lemma \ref{lem:Ziegler-values} the vectors $\ol\theta_a(x)$ span $Y_x$, so a covector killed by $\mathsf E_x$ vanishes on $T_xY_x$; thus $\mathsf E_x$ is injective. For $\omega\in\Omega_x$ the $a$-th component of $\mathsf E_x\mathsf N_x(\omega)$ is
\[
\sum_{j\in J^\times_x}\omega_j\frac{d\ol f_j(\ol\theta_a(x))}{\ol f_j(x)}=\sum_{j\in J^\times_x}\omega_j\frac{\ol\theta_a(\ol f_j)(x)}{\ol f_j(x)}=\sum_{j\in J^\times_x}\omega_j\ol\varrho_{aj}(x)=g_a(\omega)(x),
\]
by \eqref{eq:Ziegler-basis} and because $\omega_j=0$ for $j\in J^0_x$. A surjection onto $T^*_xY_x$ followed by an injection has rank $\dim Y_x$, so $\rank(\Psi_x|_{\Omega_x})=\ell_x$ and $\rank\Psi_x\geq\ell_x$.
\end{proof}


\begin{lemma}\label{lem:generic-regular}
For a Zariski general $\omega\in\Omega$, the polynomials $g_2(\omega),\dots,g_n(\omega)$ have no common zero in $\PP(H)$. Consequently they form a homogeneous regular sequence in $R_H$, and the quotient $R_H/(g_2(\omega),\dots,g_n(\omega))$ is an Artinian complete intersection.
\end{lemma}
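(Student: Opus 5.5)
The plan is an incidence-variety dimension count, stratum by stratum, using Lemma \ref{lem:rank}. Consider
\[
\mathcal B:=\{([x],\omega)\in\PP(H)\times\Omega\mid g_a(\omega)(x)=0\ \text{for }a=2,\dots,n\}.
\]
This set is well defined because each $g_a(\omega)$ is homogeneous in $x$. It is Zariski closed, being cut out by polynomials that are bihomogeneous in $x$ and linear in $\omega$. The goal is to show that its projection to $\Omega$ is contained in a proper closed subset; since $\PP(H)$ is projective, this projection is closed, so it suffices to show $\dim\mathcal B<\dim\Omega=r-1$.

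Decompose $\mathcal B$ along the finitely many strata $S_Y$ of \eqref{eq:stratum}, setting $\mathcal B_Y=\mathcal B\cap(S_Y\times\Omega)$. For $[x]\in S_Y$, the fiber of $\mathcal B_Y$ over $[x]$ is the linear subspace $\ker\Psi_x\subseteq\Omega$. By Lemma \ref{lem:rank} it has dimension
\[
\dim\ker\Psi_x\leq r-1-\ell_x=r-1-\dim Y.
\]
Since $\dim S_Y=\dim Y-1$, the fiber-dimension theorem (applied to the constructible set $\mathcal B_Y$, or to its closure) gives
\[
\dim\mathcal B_Y\leq(\dim Y-1)+(r-1-\dim Y)=r-2.
\]
Hence $\dim\mathcal B\leq r-2<r-1$. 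The image of $\mathcal B$ in $\Omega$ is therefore a proper closed subset, and any $\omega$ outside it has no common projective zero of $g_2(\omega),\dots,g_n(\omega)$. This is the main point, and the preceding lemma has already done the real work; the only care needed is that the strata cover $\PP(H)$ and that the fiber bound is uniform on each stratum.

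The remaining consequences are standard commutative algebra. Suppose $g_2(\omega),\dots,g_n(\omega)$ have no common zero in $\PP(H)\simeq\PP^{n-2}$. Then the ideal $I=(g_2(\omega),\dots,g_n(\omega))$ has radical equal to the irrelevant ideal, so $R_H/I$ has Krull dimension $0$ and is Artinian. The ring $R_H$ has dimension $n-1$ and $I$ has $n-1$ homogeneous generators of height $n-1$. Since $R_H$ is Cohen--Macaulay, these generators form a regular sequence (in the graded setting, in any order). Thus $R_H/I$ is an Artinian complete intersection.

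One side check: if some $d_a-1$ could be $0$, the corresponding $g_a$ would be a constant. But $d_a\geq2$ by Lemma \ref{lem:exponent-one}, so each $g_a$ is homogeneous of positive degree, and the regular-sequence statement is meaningful.
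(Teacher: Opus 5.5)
Your proposal is correct and follows the paper's proof essentially step for step. It uses the same stratum-by-stratum incidence count with Lemma~\ref{lem:rank} to get dimension at most $r-2<\dim\Omega$, then the homogeneous Nullstellensatz and the Cohen--Macaulay property of $R_H$ to obtain a regular sequence. The only cosmetic difference is that you work with a single closed incidence variety in $\PP(H)\times\Omega$ and use properness of $\PP(H)$ to get a closed image, whereas the paper takes the closures of the projections of the stratum-wise incidence sets.
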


\begin{proof}
Fix a nonzero flat $Y$, put $\ell=\dim Y$, and consider the incidence set
\[
\sI_Y:=\{([x],\omega)\in S_Y\times\Omega\mid g_a(\omega)(x)=0\text{ for all }2\leq a\leq n\},
\]
which is well defined because the equations are homogeneous in $x$. The fiber of $\sI_Y$ over $[x]\in S_Y$ is $\ker\Psi_x$, of dimension at most $(r-1)-\ell$ by Lemma \ref{lem:rank}, since $Y_x=Y$. As $\dim S_Y=\ell-1$, the fiber dimension inequality gives $\dim\sI_Y\leq(\ell-1)+(r-1-\ell)=r-2<r-1=\dim\Omega$. Hence the closure of the projection of $\sI_Y$ to $\Omega$ is a proper closed subset of the irreducible variety $\Omega$, and so is the union of these closed subsets over the finitely many flats $Y$. A weight $\omega$ outside this union is Zariski general, and for such $\omega$ there is no projective point at which all $g_a(\omega)$ vanish.

Put $I_\omega=(g_2(\omega),\dots,g_n(\omega))\subseteq R_H$ and $\mathfrak m_H=(x_2,\dots,x_n)$. Every $g_a$ has positive degree because $d_a\geq 2$, so the origin is a common affine zero; since there is no common projective zero, the common zero set of $I_\omega$ in $H$ is $\{0\}$ and $\sqrt{I_\omega}=\mathfrak m_H$ by the homogeneous Nullstellensatz. Thus $I_\omega$ is $\mathfrak m_H$-primary and $R_H/I_\omega$ is finite dimensional over $\C$. The ring $R_H$ has dimension $n-1$ and $I_\omega$ is generated by $n-1$ homogeneous elements, which therefore form a homogeneous system of parameters; since $R_H$ is Cohen-Macaulay, they form a regular sequence (see \cite[Chapters 17 and 18]{Eis95}).
\end{proof}

Fix a general weight $\omega$ as in Lemma \ref{lem:generic-regular}, abbreviate $g_a=g_a(\omega)$, and put $\epsilon_a:=d_a-1=\deg g_a$ and
\begin{equation}\label{eq:Q-omega}
Q_\omega:=R_H/(g_2,\dots,g_n).
\end{equation}
For a graded algebra $Q=\bigoplus_{\nu\geq 0}Q_\nu$ with finite-dimensional graded pieces let $\Hilb_Q(t)=\sum_\nu(\dim_\C Q_\nu)t^\nu$. Since $g_2,\dots,g_n$ is a regular sequence, multiplication by $g_a$ is injective on $R_H/(g_2,\dots,g_{a-1})$, and the resulting exact sequences $0\to Q'(-\epsilon_a)\xrightarrow{g_a}Q'\to Q'/g_aQ'\to 0$ give
\begin{equation}\label{eq:Hilbert-series}
\Hilb_{Q_\omega}(t)=\frac{\prod_{a=2}^n(1-t^{d_a-1})}{(1-t)^{n-1}}=\prod_{a=2}^n\big(1+t+\dots+t^{d_a-2}\big)=\sum_{\nu}h_\nu t^\nu ,
\end{equation}
with $h_\nu$ as in \eqref{eq:mu-h}. In particular $\dim_\C(Q_\omega)_\nu=h_\nu$ does not depend on the choice of the general weight, and $Q_\omega$ lives in degrees $0\leq\nu\leq\rho$.

\subsection{Localization at a parameter hyperplane}\label{sec:localization-parameter}
For $\alpha\in\C$ let $\mathfrak q_\alpha:=(\one_r\cdot s+\alpha)\subseteq\C[s]$ and $\K_\alpha:=\Frac(\C[s]/\mathfrak q_\alpha)$, the residue field of the discrete valuation ring $\C[s]_{\mathfrak q_\alpha}$. For $\nu\in\Z$ we have $\mathfrak q_{n+\nu}=(\lambda_\nu)$, and we write $\mathfrak q_\nu$, $\K_\nu$ instead of $\mathfrak q_{n+\nu}$, $\K_{n+\nu}$ when no confusion is possible.

\begin{lemma}\label{lem:support-origin}
Let $\mathfrak q\subseteq\C[s]$ be a prime ideal such that $\mathfrak q\cap\C[s_J]=0$ for every proper subset $J\subsetneq\{1,\dots,r\}$, where $\C[s_J]=\C[s_j\mid j\in J]$. Then the module $(M^{-e_1}_f)_{\mathfrak q}$ is supported at the origin: its analytic stalks vanish at every $x\neq 0$, and every element of $(M^{-e_1}_f)_{\mathfrak q}$ is annihilated by a power of $(x_1,\dots,x_n)$. The hypothesis holds for $\mathfrak q=\mathfrak q_\alpha$, $\alpha\in\C$.
\end{lemma}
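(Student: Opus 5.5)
We first check the stalks away from the origin and then deduce the torsion statement. Fix $x\neq 0$. Since $D$ is central and essential, the edge $W\in L(D)$ with $x\in W^\circ$ is a proper nonzero edge, so $J:=J(W,f)=J_x$ is a proper subset of $\{1,\dots,r\}$.

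If $1\notin J$, then $f_1$ is a unit at $x$ and $(M^{-e_1}_f)_x=0$ by Lemma \ref{lem:localization-edge}, even before localizing. If $1\in J$, Lemma \ref{lem:unit-factors} gives $(M^{-e_1}_f)_x\simeq (M^{-e_1}_{f_J})_x\otimes_{\C[s_J]}\C[s]$. Hence $B^{-e_1}_{f,x}=B^{-e_1}_{f_J,x}\cdot\C[s]$. By Sabbah's theorem (Theorem \ref{thm:Sabbah}), or simply since $M^{-e_1}_{f_J}$ is relative holonomic with a proper zero locus (Theorem \ref{thm:Wu-basic}(2)), the ideal $B^{-e_1}_{f_J,x}$ contains a nonzero $b\in\C[s_J]$. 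By hypothesis $b\notin\mathfrak q$, so $b$ becomes a unit after localizing at $\mathfrak q$. It kills the stalk, so $((M^{-e_1}_f)_{\mathfrak q})_x=(M^{-e_1}_f)_x\otimes_{\C[s]}\C[s]_{\mathfrak q}=0$, using that localization at $\mathfrak q$ commutes with taking analytic stalks.

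Next we pass from vanishing stalks to $\mathfrak m$-torsion, where $\mathfrak m=(x_1,\dots,x_n)$. Take $m\in(M^{-e_1}_f)_{\mathfrak q}$ and consider $N:=\sO_X[s]_{\mathfrak q}\cdot m$. Since $M^{-e_1}_f$ is cyclic over $\sD_X[s]$, we can take $m=P\cdot[f^s]$ and work with the coherent $\sO_X\otimes\C[s]_{\mathfrak q}$-module generated by $m$. A cleaner route is to use the relative characteristic variety. By the first step and \eqref{eq:localize-CC}, $\Chrel((M^{-e_1}_f)_{\mathfrak q})$ projects to $\{0\}\subseteq X$: its components are of the form $T^*_WX\times S$ by the discussion after Theorem \ref{thm:Wu-basic}, and for $W\neq 0$ the stalks at $W^\circ$ vanish. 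So the $\sO_X$-support of $\gr^{\mathrm{rel}}$ for a good filtration, and hence of each $F_k$ (a finite $\sO_X\otimes\C[s]_{\mathfrak q}$-module), is contained in $\{0\}$. To make this rigorous, apply the stalk vanishing to each coherent $\sO_X\otimes\C[s]_{\mathfrak q}$-module $F_k$: its support misses every $x\neq 0$ by faithful flatness of $\sO^{an}_{X,x}$, as in Lemma \ref{lem:stalks}(1). The Nullstellensatz then shows that $F_k$ is killed by a power of $\mathfrak m$. Every element lies in some $F_k$, which gives the claim.

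Finally we check the hypothesis for $\mathfrak q_\alpha=(\one_r\cdot s+\alpha)$. A nonzero element of $\mathfrak q_\alpha$ has the form $(\one_r\cdot s+\alpha)g$. If it lay in $\C[s_J]$ with $J$ proper, take $k\notin J$. Then $\partial_{s_k}$ of the product vanishes, so $g+(\one_r\cdot s+\alpha)\partial_{s_k}g=0$. Comparing degrees in $s_k$ forces $g=0$; equivalently, the product depends on $s_k$ whenever $g\neq 0$. So $\mathfrak q_\alpha\cap\C[s_J]=0$.

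The main obstacle is the second step: making sure that localization in $s$ commutes with analytic stalks, and passing from stalk vanishing to $\mathfrak m$-power torsion for a module that is only coherent over $\sD_X[s]_{\mathfrak q}$. The plan handles this by working with the $\sO_X\otimes\C[s]_{\mathfrak q}$-coherent pieces $F_k$ of a good filtration.
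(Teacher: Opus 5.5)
Your proposal is correct and follows essentially the same route as the paper: Sabbah's theorem for the germ of $f_{J_x}$ is transported by Lemma \ref{lem:unit-factors} and made invertible by the hypothesis on $\mathfrak q$, which kills the localized stalks at $x\neq 0$; faithful flatness then gives $(x_1,\dots,x_n)$-power torsion, and a degree count in $s_k$ handles $\mathfrak q_\alpha$. The detour through the relative characteristic variety and the filtration pieces $F_k$ is harmless but unnecessary: the cyclic submodule $\sO_X[s]_{\mathfrak q}\cdot m$ you first wrote down is exactly what the paper uses, and for $\mathfrak q_\alpha$ it is simpler to note that $\deg_{s_k}\big((\one_r\cdot s+\alpha)g\big)=1+\deg_{s_k}g$ than to differentiate, since your ``comparing degrees'' step really needs a leading-coefficient comparison.
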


\begin{proof}
Let $x\neq 0$ and let $J_x=\{j\mid f_j(x)=0\}$, a proper subset of $\{1,\dots,r\}$ because $D$ is essential. If $1\notin J_x$ then $f_1$ is a unit at $x$ and $(M^{-e_1}_f)_x=0$. If $1\in J_x$, then by Theorem \ref{thm:Sabbah} applied to the germ of the tuple $f_{J_x}=(f_j)_{j\in J_x}$ at $x$ there is a nonzero $b_x\in\C[s_{J_x}]$ with $b_xf_{J_x}^{s_{J_x}}\in\sD^{an}_{X,x}[s_{J_x}]f_{J_x}^{s_{J_x}+\one_{J_x}}\subseteq\sD^{an}_{X,x}[s_{J_x}]f_{J_x}^{s_{J_x}+e_1}$, so that $b_x$ annihilates $(M^{-e_1}_{f_{J_x}})_x$, hence $(M^{-e_1}_f)_x$ by Lemma \ref{lem:unit-factors}. By hypothesis $b_x\notin\mathfrak q$, so $b_x$ becomes a unit in $\C[s]_{\mathfrak q}$ and $(M^{-e_1}_f)_{x,\mathfrak q}=0$. For $m\in(M^{-e_1}_f)_{\mathfrak q}$ the coherent $\sO_X$-module generated by $m$ has vanishing analytic stalks away from $0$, hence is supported at $0$ by faithful flatness, so $m$ is annihilated by a power of $(x_1,\dots,x_n)$. Finally, if $J$ is a proper subset and $h\notin J$, then every nonzero element of $\mathfrak q_\alpha=(\one_r\cdot s+\alpha)$ has positive degree in $s_h$ because $\C[s]$ is a domain; hence $\mathfrak q_\alpha\cap\C[s_J]=0$.
\end{proof}

By Theorem \ref{thm:Bath-principal-radical} and \eqref{eq:bDi}, $B^{-e_1}_f=(b_{D,1})$ with $b_{D,1}$ squarefree. In the discrete valuation ring $\C[s]_{\mathfrak q_\alpha}$ the element $b_{D,1}$ is therefore either a unit or a unit times $\one_r\cdot s+\alpha$; equivalently,
\begin{equation}\label{eq:zero-or-killed}
(M^{-e_1}_f)_{\mathfrak q_\alpha}=0\qquad\text{or}\qquad(\one_r\cdot s+\alpha)\cdot(M^{-e_1}_f)_{\mathfrak q_\alpha}=0 ,
\end{equation}
and the first alternative holds if and only if $\one_r\cdot s+\alpha$ does not divide $b_{D,1}$. In either case $(M^{-e_1}_f)_{\mathfrak q_\alpha}$ is a cyclic module over $\sD_X\otimes_\C\K_\alpha$ which is supported at the origin by Lemma \ref{lem:support-origin}. Lemma \ref{lem:kashiwara}(3) therefore gives
\begin{equation}\label{eq:M-is-delta}
(M^{-e_1}_f)_{\mathfrak q_\alpha}\simeq\delta_{0,\K_\alpha}^{\oplus m(\alpha)},\qquad m(\alpha):=\dim_{\K_\alpha}\Hom_{\sD_X\otimes\K_\alpha}\big((M^{-e_1}_f)_{\mathfrak q_\alpha},\delta_{0,\K_\alpha}\big),
\end{equation}
and $(M^{-e_1}_f)_{\mathfrak q_\alpha}\neq 0$ if and only if $m(\alpha)>0$. Moreover, when $\alpha=n+\nu$ with $\nu\in\Z$,
\begin{equation}\label{eq:m-equals-mu}
m(n+\nu)=\mu_\nu .
\end{equation}
Indeed, a relative good filtration of $M^{-e_1}_f$ localizes to a relative good filtration of $(M^{-e_1}_f)_{\mathfrak q_\nu}$, whose associated graded module is annihilated by $\lambda_\nu$ and is therefore a good filtration of $(M^{-e_1}_f)_{\mathfrak q_\nu}$ as a $\sD_X\otimes\K_\nu$-module. By \eqref{eq:localize-CC} the multiplicity of $\Gamma_\nu$ in $\CCrel(M^{-e_1}_f)$, which is the length of the graded module at the generic point of $\Gamma_\nu=T^*_{\{0\}}X\times(\lambda_\nu=0)$, equals the multiplicity of $T^*_{\{0\}}X\otimes\K_\nu$ in $\CC((M^{-e_1}_f)_{\mathfrak q_\nu})$, which is $m(n+\nu)$ by \eqref{eq:M-is-delta} and Lemma \ref{lem:kashiwara}(3).

\subsection{Estimate of local multiplicities}\label{sec:upper-bound}
By Theorem \ref{thm:log-ann} applied to the adapted basis $E,\theta_2,\dots,\theta_n$ and Lemma \ref{lem:cyclic},
\begin{equation}\label{eq:cyclic-presentation-unit}
M^{-e_1}_f\simeq\frac{\sD_X[s]}{\sD_X[s]\big(x_1,\wt E,\wt\theta_2,\dots,\wt\theta_n\big)},\qquad \wt E=E-\one_r\cdot s,\qquad\wt\theta_a=\theta_a-\sum_{j=1}^rs_j\varrho_{aj},
\end{equation}
since $E(f_j)=f_j$ for every $j$. Fix $\alpha\in\C$ and write $\K=\K_\alpha$. A $\sD_X\otimes\K$-linear map $(M^{-e_1}_f)_{\mathfrak q_\alpha}\to\delta_{0,\K}$ is determined by the image $v\in\delta_{0,\K}$ of the class of $1$, and $v$ can be prescribed arbitrarily subject to
\begin{equation}\label{eq:solution-conditions}
x_1v=0,\qquad\wt Ev=0,\qquad\wt\theta_av=0\quad(2\leq a\leq n),
\end{equation}
where $s_j$ acts on $\delta_{0,\K}$ through its image in $\K$. By \eqref{eq:delta-rules}, $E$ acts on $\partial^\beta\delta_0$ by the scalar $-(n+|\beta|)$, while $\one_r\cdot s=-\alpha$ in $\K$; hence $\wt E$ acts on $\partial^\beta\delta_0$ by $\alpha-n-|\beta|$. Consequently the conditions \eqref{eq:solution-conditions} have no nonzero solution unless $\alpha-n\in\N$, and by \eqref{eq:M-is-delta}:
\begin{equation}\label{eq:non-integral}
\text{if }\alpha\notin n+\N,\ \text{then } (M^{-e_1}_f)_{\mathfrak q_\alpha}=0 \text{ and } \one_r\cdot s+\alpha\nmid b_{D,1}.
\end{equation}

From now on let $\alpha=n+\nu$ with $\nu\in\N$, so $\K=\K_\nu$. Put $R:=R_H=\C[x_2,\dots,x_n]$, with $R_q$ its degree-$q$ part ($R_q=0$ for $q<0$), and
\begin{equation}\label{eq:V-q}
V_q:=\{v\in\delta_{0,\K}\mid x_1v=0,\ Ev=-(n+q)v\}\qquad(q\in\Z),
\end{equation}
so that $V_q=0$ for $q<0$ and the elements $\partial_{x_2}^{\beta_2}\cdots\partial_{x_n}^{\beta_n}\delta_0$ with $\beta_2+\dots+\beta_n=q$ form a $\K$-basis of $V_q$ for $q\geq 0$. For multi-indices $\alpha,\beta\in\N^{n-1}$ (indexed by $2,\dots,n$) the pairing
\begin{equation}\label{eq:pairing}
\big\langle x^\alpha,\partial^\beta\delta_0\big\rangle:=(-1)^q\beta!\,\delta_{\alpha\beta}\qquad(|\alpha|=|\beta|=q)
\end{equation}
identifies $V_q$ with $\K\otimes_\C R_q^\vee$, and by \eqref{eq:delta-rules} it satisfies
\begin{equation}\label{eq:pairing-transpose}
\langle h,pv\rangle=\langle ph,v\rangle\qquad(p\in R_e,\ h\in R_{q-e},\ v\in V_q).
\end{equation}
Thus multiplication by a polynomial $p\in R_e$ on $\delta_{0,\K}$, restricted to $V_q\to V_{q-e}$, is the transpose $m_p^\vee$ of the multiplication map $m_p\colon R_{q-e}\to R_q$.

Now let $v$ satisfy \eqref{eq:solution-conditions}. The condition $\wt Ev=0$ forces $v\in V_\nu$. For $2\leq a\leq n$, the operator $\theta_a=\sum_{\alpha\geq 2}p_{a\alpha}\partial_{x_\alpha}$ preserves $\ker(x_1)$ and lowers the eigenvalue degree $q$ by $d_a-1$, and so does each $\varrho_{aj}$; moreover, every coefficient divisible by $x_1$ acts by zero on $\ker(x_1)$, so only the restrictions $\ol p_{a\alpha}$, $\ol\varrho_{aj}$ to $H$ matter. Since $\varrho_{a1}=0$, the operator $\wt\theta_a$ induces $\K$-linear maps
\begin{equation}\label{eq:T-a-nu}
T_{a,\nu}(s):=\wt\theta_a\big|_{V_\nu}=\ol\theta_a\big|_{V_\nu}-\sum_{j\neq 1}s_j\,m^\vee_{\ol\varrho_{aj}}\colon V_\nu\to V_{\nu-\epsilon_a},\qquad\epsilon_a=d_a-1 ,
\end{equation}
where $\ol\theta_a$ acts on $\ker(x_1)$ through its coefficients $\ol p_{a\alpha}$. The entries of $T_{a,\nu}(s)$ in the monomial bases are affine-linear functions of $s$ with complex coefficients; we regard $T_\nu(s):=(T_{2,\nu}(s),\dots,T_{n,\nu}(s))\colon V_\nu\to\bigoplus_{a=2}^nV_{\nu-\epsilon_a}$ as a matrix-valued polynomial function on the hyperplane $(\lambda_\nu=0)\subseteq\C^r$, whose function field is $\K_\nu$. By \eqref{eq:M-is-delta}, \eqref{eq:m-equals-mu} and the discussion above,
\begin{equation}\label{eq:mu-is-kernel}
\mu_\nu=\dim_{\K_\nu}\ker T_\nu ,
\end{equation}
and $\rank_{\K_\nu}T_\nu$ is the rank of $T_\nu(s)$ at a general point of the hyperplane $(\lambda_\nu=0)$.

Choose a general weight $\omega\in\Omega$ as in Lemma \ref{lem:generic-regular} and extend it to $\hat\omega\in\C^r$ by $\hat\omega_j=\omega_j$ for $j\neq 1$ and $\hat\omega_1=-\sum_{j\neq 1}\omega_j$. Then $\one_r\cdot\hat\omega=0$, so for $s_0\in(\lambda_\nu=0)$ the line $s(\tau)=s_0+\tau\hat\omega$ stays in the hyperplane $(\lambda_\nu=0)$. Because $\varrho_{a1}=0$, \eqref{eq:g-a} and \eqref{eq:T-a-nu} give
\begin{equation}\label{eq:perturbation}
T_{a,\nu}(s(\tau))=T_{a,\nu}(s_0)-\tau\,m^\vee_{g_a(\omega)}\qquad(2\leq a\leq n).
\end{equation}
Let
\begin{equation}\label{eq:G-nu}
G_\nu\colon\bigoplus_{a=2}^nR_{\nu-\epsilon_a}\to R_\nu,\qquad(q_a)_a\mapsto\sum_{a=2}^ng_a(\omega)q_a ,
\end{equation}
so that, under \eqref{eq:pairing}, the leading coefficient in $\tau$ of $T_\nu(s(\tau))$ is $-G_\nu^\vee$. Put $r_0=\rank_\C G_\nu^\vee=\rank_\C G_\nu$. If $r_0>0$, choose a nonzero $r_0\times r_0$ minor of $G_\nu^\vee$; the corresponding minor of $T_\nu(s(\tau))$ is a polynomial in $\tau$ whose coefficient of $\tau^{r_0}$ is $\pm$ that minor, so it does not vanish identically on $(\lambda_\nu=0)$ and is nonzero in $\K_\nu$. Hence $\rank_{\K_\nu}T_\nu\geq r_0$, and since $\dim_{\K_\nu}V_\nu=\dim_\C R_\nu$,
\[
\mu_\nu=\dim_{\K_\nu}V_\nu-\rank_{\K_\nu}T_\nu\leq\dim_\C R_\nu-\rank_\C G_\nu=\dim_\C\coker G_\nu=\dim_\C(Q_\omega)_\nu=h_\nu
\]
by \eqref{eq:Hilbert-series}. Together with \eqref{eq:non-integral} (for $\nu<0$, where $h_\nu=0$) we have proved:
\begin{equation}\label{eq:upper-bound}
\mu_\nu\leq h_\nu\qquad\text{for every }\nu\in\Z .
\end{equation}

\subsection{The total multiplicity}\label{sec:total}
For $c\in\{1,\dots,r\}$ and $\nu\in\Z$ put $\mu_{c,\nu}:=m_{\Gamma_\nu}(M^{-e_c}_f)$, so that $\mu_{1,\nu}=\mu_\nu$. Since $\Chrel(M^{-e_c}_f)$ has finitely many components, there is an integer $k_0$ with $\mu_{c,\nu}=0$ whenever $|\nu|\geq k_0$. Choose $k>\max(k_0,n)$ large enough for Theorem \ref{thm:Wu-dense} to apply with $W=\{0\}$ and $l=n$, and put $k_r=(k,\dots,k)$. Consider the coordinatewise decreasing path
\[
a^{(0)}=k_r\geq a^{(1)}\geq\dots\geq a^{(2kr)}=-k_r
\]
in $\Z^r$ built in three blocks: (1) decrement every coordinate $j\neq 1$ one unit at a time, $k$ times each, ending at $ke_1$; (2) decrement the first coordinate $2k$ times, ending at $-ke_1$; (3) decrement every coordinate $j\neq 1$ one unit at a time, $k$ times each, ending at $-k_r$. If $c_m$ denotes the coordinate decremented at step $m$, so that $a^{(m+1)}=a^{(m)}-e_{c_m}$, then the short exact sequences
\begin{equation}\label{eq:path-ses}
0\to M^{a^{(m+1)},-k_r}_f\to M^{a^{(m)},-k_r}_f\to M^{a^{(m)},a^{(m+1)}}_f\to 0
\end{equation}
show, by Theorem \ref{thm:Wu-basic}(3) applied repeatedly, that
\begin{equation}\label{eq:additivity-path}
\CCrel_{n+r-1}(M^{k_r,-k_r}_f)=\sum_{m}\CCrel_{n+r-1}\big(M^{a^{(m)},a^{(m)}-e_{c_m}}_f\big).
\end{equation}
By translation, $M^{a^{(m)},a^{(m)}-e_{c_m}}_f$ is $M^{-e_{c_m}}_f$ with the parameters shifted by $a^{(m)}$, and the translate of $\Gamma_\nu$ by $a^{(m)}$ is $T^*_{\{0\}}X\times(\one_r\cdot s+n+\nu-\one_r\cdot a^{(m)}=0)$. Hence the multiplicity of the component
\[
\Gamma:=\Gamma_0=T^*_{\{0\}}X\times(\one_r\cdot s+n=0),
\]
which has dimension $n+r-1$, in $\CCrel(M^{a^{(m)},a^{(m)}-e_{c_m}}_f)$ equals $\mu_{c_m,\,\one_r\cdot a^{(m)}}$. The sums $\one_r\cdot a^{(m)}$ at the beginning of the steps are $kr,kr-1,\dots,k+1$ in block (1), $k,k-1,\dots,-k+1$ in block (2), and $-k,-k-1,\dots,-kr+1$ in block (3). By the choice of $k$ the steps of blocks (1) and (3) contribute nothing, and every integer $\nu$ with $\mu_{1,\nu}\neq 0$ occurs exactly once in block (2). Therefore \eqref{eq:additivity-path} gives
\begin{equation}\label{eq:total-mult}
m_\Gamma\big(M^{k_r,-k_r}_f\big)=\sum_{\nu\in\Z}\mu_\nu .
\end{equation}
On the other hand, Theorem \ref{thm:Wu-dense} with $W=\{0\}$ and $l=n$ computes the left-hand side of \eqref{eq:total-mult} as $(-1)^{n-1}\chi(U(D))$, where $U(D)=\PP(X)\setminus\bigcup_{H\in D}\PP(H)$. We evaluate this number. Let $M(D)=X\setminus\bigcup_{H\in D}H$. Every projective point of $U(D)$ has a unique representative $y$ with $f_1(y)=1$, so $M(D)\to\C^*\times U(D)$, $x\mapsto(f_1(x),[x])$, is an isomorphism with inverse $(c,[y])\mapsto cy$. Hence $\pi(M(D),t)=(1+t)\pi(U(D),t)$, and Terao's factorization theorem (Theorem \ref{thm:free-facts}(4)) gives $\pi(U(D),t)=\prod_{a=2}^n(1+d_at)$. Therefore
\begin{equation}\label{eq:chi-U}
(-1)^{n-1}\chi(U(D))=(-1)^{n-1}\pi(U(D),-1)=\prod_{a=2}^n(d_a-1)=\sum_{\nu\in\Z}h_\nu ,
\end{equation}
the last equality by \eqref{eq:Hilbert-series} at $t=1$. Combining \eqref{eq:total-mult}, \eqref{eq:chi-U} and \eqref{eq:upper-bound} we obtain $\sum_\nu\mu_\nu=\sum_\nu h_\nu$ with $\mu_\nu\leq h_\nu$ for every $\nu$, hence
\begin{equation}\label{eq:equality}
\mu_\nu=h_\nu\qquad\text{for every }\nu\in\Z .
\end{equation}

\begin{proof}[Proof of Theorem \ref{thm:local}]
The first assertion is \eqref{eq:equality}. For the second, let $\alpha\in\C$. If $\alpha\notin n+\N$, then $\one_r\cdot s+\alpha\nmid b_{D,1}$ by \eqref{eq:non-integral}. Let $\alpha=n+\nu$ with $\nu\in\N$. If $0\leq\nu\leq\rho$, then $\mu_\nu=h_\nu>0$, so $\Gamma_\nu$ is a component of $\Chrel(M^{-e_1}_f)$, and $(\lambda_\nu=0)=p_2(\Gamma_\nu)\subseteq Z(B^{-e_1}_f)$ by Theorem \ref{thm:Wu-basic}(2); thus $\lambda_\nu$ divides $b_{D,1}$. If $\nu>\rho$, then $m(n+\nu)=\mu_\nu=h_\nu=0$ by \eqref{eq:m-equals-mu} and \eqref{eq:equality}, so $(M^{-e_1}_f)_{\mathfrak q_\nu}=0$ by \eqref{eq:M-is-delta}, and $\lambda_\nu$ does not divide $b_{D,1}$ by \eqref{eq:zero-or-killed}.
\end{proof}

\begin{remark}\label{rem:two-multiplicities}
The integer $\mu_\nu$ is a multiplicity of a relative characteristic cycle and may well exceed one.  It is not the exponent of $\lambda_\nu$ in $b_{D,1}$, which is one because $B^{-e_1}_f$ is radical.
\end{remark}

\begin{remark}
The proof shows slightly more than stated: for every $\alpha\in\C$, the module $(M^{-e_1}_f)_{\mathfrak q_\alpha}$ is isomorphic to $\delta_{0,\K_\alpha}^{\oplus h_{\alpha-n}}$, where $h_{\alpha-n}:=0$ if $\alpha-n\notin\Z$. In particular the ``fiber'' of $M^{-e_1}_f$ at the generic point of the hyperplane $(\lambda_\nu=0)$ is a point-supported module of length $h_\nu$.
\end{remark}

\section{Proofs of Theorem \texorpdfstring{\ref{thm:main-unit}}{1.1} and Corollary \texorpdfstring{\ref{cor:general-shift}}{1.2}}\label{sec:global}

Let $D$ be a free central arrangement in $X=\C^n$ with complete factorization $f$ and let $i\in\{1,\dots,r\}$. By Theorem \ref{thm:Bath-principal-radical} we have $B^{-e_i}_f=(b_{D,i})$ with $b_{D,i}$ squarefree, and by Lemma \ref{lem:localization-edge},
\begin{equation}\label{eq:lcm}
b_{D,i}=\operatorname{lcm}_{W\in L(D),\ i\in J(W,f)}\ b_{D^W,i},\qquad b_{D^W,i}\in\C[s_{J(W,f)}],
\end{equation}
where $b_{D^W,i}$ generates the Bernstein-Sato ideal $B^{-e_i}_{f_W}$ of the complete factorization $f_W$ of the essential free arrangement $D^W$ in $X/W$, which is a squarefree polynomial in the variables $s_j$, $j\in J(W,f)$. For a dense edge $W$ with $i\in J(W,f)$ and $0\leq\nu\leq|J(W,f)|-2\rank(W)+1$ we write
\begin{equation}\label{eq:LWnu}
L_{W,\nu}:=\sum_{j\in J(W,f)}s_j+\rank(W)+\nu ,
\end{equation}
so that $P_{D,i}=\prod_{(W,\nu)}L_{W,\nu}$. Note that for an edge $W$ the edges of $D^W$ are the subspaces $W'/W$ with $W'\in L(D)$, $W'\supseteq W$; that $J(W'/W,f_W)=J(W',f)$ and $\rank_{X/W}(W'/W)=\rank_X(W')$; and that $(D^W)^{W'/W}=D^{W'}$, so that $W'/W$ is a dense edge of $D^W$ if and only if $W'$ is a dense edge of $D$.

\subsubsection*{The lower bound}
Let $W$ be a dense edge with $i\in J(W,f)$ and put $J=J(W,f)$, $n_W=\rank(W)$, $r_W=|J|$. The arrangement $D^W$ in $X/W\simeq\C^{n_W}$ is essential, irreducible (because $W$ is dense) and free (Theorem \ref{thm:free-facts}(3)), with $r_W$ hyperplanes. Theorem \ref{thm:local}, applied to $D^W$ with the parameters $s_J$ and with the index $i$ in place of $1$, shows that $L_{W,\nu}$ divides $b_{D^W,i}$ for $0\leq\nu\leq r_W-2n_W+1$, hence divides $b_{D,i}$ by \eqref{eq:lcm}. The linear forms $L_{W,\nu}$ are pairwise non-associate: the coefficient vector of $L_{W,\nu}$ is the indicator vector of $J(W,f)$, which determines $W=\bigcap_{j\in J(W,f)}D_j$, and for fixed $W$ distinct $\nu$ give distinct constant terms. Since $\C[s]$ is a unique factorization domain, $P_{D,i}$ divides $b_{D,i}$.

\subsubsection*{The upper bound}
Let $\ell$ be an irreducible factor of $b_{D,i}$ and put $\mathfrak q=(\ell)$. We show that $\ell$ is a constant multiple of one of the linear forms $L_{W,\nu}$. Let
\[
T:=\{h\in\{1,\dots,r\}\mid\tau_{e_h}(\mathfrak q)\neq\mathfrak q\},
\]
where $\tau_{e_h}(b)(s)=b(s-e_h)$ as in Lemma \ref{lem:ACC}; thus $h\notin T$ if and only if $\ell(s-e_h)$ is a constant multiple of $\ell(s)$, which is the case, for instance, when $\ell$ does not involve $s_h$. Put $\sN:=(M^{-e_i}_f)_{\mathfrak q}$. Since $\ell$ divides $b_{D,i}$ we have $\mathfrak q\supseteq B^{-e_i}_f$, so $\sN\neq 0$ by Lemma \ref{lem:Z=supp}. For $h\notin T$, Corollary \ref{cor:invertible} shows that $f_h$ acts invertibly on $(\sD_X[s]f^s)_{\mathfrak q}$ and on $(\sD_X[s]f^{s+e_i})_{\mathfrak q}$, hence on their quotient $\sN$. Let $g=\prod_{h\notin T}f_h$. Then $\sN=\sO_X[1/g]\otimes_{\sO_X}\sN$, so the restriction of $\sN$ to $X\setminus(g=0)$ is nonzero, and there is a point $y\in X$ with $g(y)\neq 0$ and $\sN_y=\big((M^{-e_i}_f)_y\big)_{\mathfrak q}\neq 0$ (faithful flatness of the analytic local ring). Let $W'\in L(D)$ be the edge with $y\in W'^\circ$, i.e. $J(W',f)=J_y=\{j\mid f_j(y)=0\}$; since $g(y)\neq 0$ we have $J(W',f)\subseteq T$.

The stalk $(M^{-e_i}_f)_y$ is relative holonomic, so by Lemma \ref{lem:Z=supp} the condition $\sN_y\neq 0$ means $\mathfrak q\supseteq B^{-e_i}_{f,y}$. By Lemma \ref{lem:localization-edge} this forces $i\in J(W',f)$ and $\ell\mid b_{D^{W'},i}$, where $b_{D^{W'},i}\in\C[s_{J(W',f)}]$. Factoring $b_{D^{W'},i}$ into irreducibles in $\C[s_{J(W',f)}]$, which remain irreducible in $\C[s]$, we see that $\ell$ is a constant multiple of a polynomial in the variables $s_j$, $j\in J(W',f)$; hence $\tau_{e_h}(\mathfrak q)=\mathfrak q$ for $h\notin J(W',f)$, that is, $T\subseteq J(W',f)$. Therefore
\begin{equation}\label{eq:T=J}
T=J(W',f).
\end{equation}

Next we show that $W'$ is dense. Let $D^{W'}=D^{(1)}\times\dots\times D^{(c)}$ be the decomposition of the essential arrangement $D^{W'}$ in $X/W'=V_1\oplus\dots\oplus V_c$ into irreducible factors, let $I_a\subseteq J(W',f)$ be the indices of the hyperplanes in $D^{(a)}$, and let $a_0$ be such that $i\in I_{a_0}$. Write $f_{(a)}=(f_j)_{j\in I_a}$, a complete factorization of $D^{(a)}$ in $V_a$. Since $f_{W'}^{s_{J(W',f)}}=\prod_af_{(a)}^{s_{I_a}}$ and the factors depend on disjoint sets of variables, the $\sD_{X/W'}[s_{J(W',f)}]$-modules generated by $f_{W'}^{s}$ and by $f_{W'}^{s+e_i}$ are the tensor products over $\C$ of the modules generated by the $f_{(a)}^{s_{I_a}}$, respectively by $f_{(a_0)}^{s_{I_{a_0}}+e_i}$ and $f_{(a)}^{s_{I_a}}$ ($a\neq a_0$). Hence
\[
M^{-e_i}_{f_{W'}}\simeq M^{-e_i}_{f_{(a_0)}}\otimes_\C\bigotimes_{a\neq a_0}\sD_{V_a}[s_{I_a}]f_{(a)}^{s_{I_a}} ,
\]
and the module $M_2:=\bigotimes_{a\neq a_0}\sD_{V_a}[s_{I_a}]f_{(a)}^{s_{I_a}}$ is torsion-free over $\C[s_{I'}]$, $I'=J(W',f)\setminus I_{a_0}$, being a submodule of the tensor product of the free $\C[s_{I_a}]$-modules $j_*(\sO_{U_a}[s_{I_a}]f_{(a)}^{s_{I_a}})$, which is free over $\C[s_{I'}]$. A polynomial $p=\sum_\gamma p_\gamma(s_{I_{a_0}})\,s_{I'}^\gamma$ annihilates a tensor product $M_1\otimes_\C M_2$ with $M_2$ torsion-free over $\C[s_{I'}]$ if and only if every $p_\gamma$ annihilates $M_1$: indeed, $p$ then annihilates $M_1\otimes_\C(M_2\otimes_{\C[s_{I'}]}\Frac(\C[s_{I'}]))$, which is a direct sum of copies of $M_1\otimes_\C\Frac(\C[s_{I'}])$, and the monomials $s_{I'}^\gamma$ are linearly independent over $\C$ in $\Frac(\C[s_{I'}])$. Consequently $B^{-e_i}_{f_{W'}}=B^{-e_i}_{f_{(a_0)}}\cdot\C[s_{J(W',f)}]$ and $b_{D^{W'},i}\in\C[s_{I_{a_0}}]$ up to a constant. As above, $\ell$ is then a constant multiple of a polynomial in the variables $s_{I_{a_0}}$, so $T\subseteq I_{a_0}$, and \eqref{eq:T=J} gives $I_{a_0}=J(W',f)$, i.e. $c=1$ and $D^{W'}$ is irreducible. Thus $W'$ is a dense edge with $i\in J(W',f)$.

Finally we apply Section \ref{sec:local} to the essential, irreducible, free arrangement $D^{W'}$ in $X/W'\simeq\C^{n'}$, $n'=\rank(W')$, with its $r'=|J(W',f)|$ hyperplanes, the parameters $s_{J(W',f)}$, and the index $i$ in place of $1$. Let $\mathfrak q'=\ell\,\C[s_{J(W',f)}]$, a prime ideal of $\C[s_{J(W',f)}]$ containing $B^{-e_i}_{f_{W'}}$. For every proper subset $J\subsetneq J(W',f)$ we have $\mathfrak q'\cap\C[s_J]=0$: otherwise $\ell$ would divide a nonzero polynomial in the variables $s_J$ and, as above, would be a constant multiple of a polynomial in these variables, contradicting \eqref{eq:T=J}. Hence Lemma \ref{lem:support-origin} applies to $\mathfrak q'$, and $\sN':=(M^{-e_i}_{f_{W'}})_{\mathfrak q'}$ is a nonzero module (Lemma \ref{lem:Z=supp}), supported at the origin of $X/W'$, and annihilated by $\ell$ because $B^{-e_i}_{f_{W'}}$ is principal and radical. By Lemma \ref{lem:kashiwara}, $\sN'$ is a nonzero direct sum of copies of $\delta_{0,\K'}$, $\K'=\Frac(\C[s_{J(W',f)}]/\mathfrak q')$, and by the cyclic presentation \eqref{eq:cyclic-presentation-unit} there is a nonzero $v\in\delta_{0,\K'}$ with $f_iv=0$ and $\wt Ev=0$, $\wt E=E-\one_{J(W',f)}\cdot s$. As $E$ acts on $\partial^\beta\delta_0$ by $-(n'+|\beta|)$, this forces $\one_{J(W',f)}\cdot s=-(n'+q)$ in $\K'$ for some $q\in\N$, that is, $\one_{J(W',f)}\cdot s+n'+q\in\mathfrak q'=(\ell)$. Since $\ell$ is irreducible, it is a constant multiple of the linear form $\one_{J(W',f)}\cdot s+n'+q=L_{W',q}$, and Theorem \ref{thm:local} applied to $D^{W'}$ shows that $0\leq q\leq r'-2n'+1$. Thus $\ell$ is a constant multiple of one of the factors of $P_{D,i}$.

Since $b_{D,i}$ is squarefree and each of its irreducible factors divides $P_{D,i}$, we conclude that $b_{D,i}$ divides $P_{D,i}$. Together with the lower bound this proves $B^{-e_i}_f=(P_{D,i}(s))$, i.e. Theorem \ref{thm:main-unit}. \qed

\begin{proof}[Proof of Corollary \ref{cor:general-shift}]
By translation (\S\ref{sec:relD}), $M^{a,b}_f$ is $M^{-c}_f=M^{0,-c}_f$ with the parameters shifted by $a$, so $B^{a,b}_f=\tau_a(B^{-c}_f)=\{p(s-a)\mid p\in B^{-c}_f\}$, and it suffices to prove $B^{-c}_f=(P_{D,c}(s))$. Put $N=\sum_jc_j$ and choose a chain
\[
0=a^{(0)}\geq a^{(1)}\geq\dots\geq a^{(N)}=-c\qquad\text{with}\qquad a^{(m+1)}=a^{(m)}-e_{c_m},\quad c_m\in\{1,\dots,r\}.
\]
For $m<m'$ the ideal $B^{a^{(m)},a^{(m')}}_f$ is principal and radical by Theorem \ref{thm:Bath-principal-radical}, and it is nonzero because $Z(B^{a^{(m)},a^{(m')}}_f)=p_2(\Chrel(M^{a^{(m)},a^{(m')}}_f))$ has dimension at most $r-1$ by Theorem \ref{thm:Wu-basic}(2); hence its zero locus is empty or of pure dimension $r-1$. Theorem \ref{thm:Wu-basic}(3) and induction on $N$ therefore give
\begin{equation}\label{eq:zero-locus-chain}
Z(B^{-c}_f)=\bigcup_{m=0}^{N-1}Z\big(B^{a^{(m)},a^{(m+1)}}_f\big)=\bigcup_{m=0}^{N-1}\Big(Z(B^{-e_{c_m}}_f)+a^{(m)}\Big),
\end{equation}
the second equality by translation. By Theorem \ref{thm:main-unit}, $Z(B^{-e_{c_m}}_f)$ is the union of the hyperplanes $(L_{W,\nu}=0)$ over the dense edges $W$ with $c_m\in J(W,f)$ and $0\leq\nu\leq\rho_W:=|J(W,f)|-2\rank(W)+1$, and the translate of $(L_{W,\nu}=0)$ by $a^{(m)}$ is the hyperplane $\big(\sum_{j\in J(W,f)}s_j+\rank(W)+\nu-\one_{J(W,f)}\cdot a^{(m)}=0\big)$. Fix a dense edge $W$. The steps $m$ with $c_m\in J(W,f)$ are exactly those at which the sum $\one_{J(W,f)}\cdot a^{(m)}$ decreases; this sum starts at $0$ and ends at $-c_W$, so at these steps it takes the values $0,-1,\dots,-(c_W-1)$, each exactly once. Hence $W$ contributes to \eqref{eq:zero-locus-chain} the hyperplanes $\big(\sum_{j\in J(W,f)}s_j+\rank(W)+\nu+t=0\big)$ with $0\leq\nu\leq\rho_W$ and $0\leq t\leq c_W-1$, and nothing if $c_W=0$. Since $\rho_W\geq 0$, the integers $\nu+t$ fill the interval $[0,\rho_W+c_W-1]=[0,|J(W,f)|-2\rank(W)+c_W]$. Therefore $Z(B^{-c}_f)$ is the union of the pairwise distinct hyperplanes defined by the linear factors of $P_{D,c}$. As $B^{-c}_f$ is principal and radical, its generator is a squarefree polynomial with this zero locus, hence a nonzero constant multiple of $P_{D,c}$.
\end{proof}


\begin{remark}\label{rem:symmetry}
Bath's duality formula \cite[Theorem 1.2]{Bat20b}, applied with $f'=1$ and $g=f_i$, gives $\mathbb D(M^{-e_i}_f)\simeq\sD_X[s]f^{-s-\one_r-e_i}/\sD_X[s]f^{-s-\one_r}[n+1]$ for a free arrangement; since $M^{-e_i}_f$ is $(n+1)$-Cohen-Macaulay (\cite[Lemma 2.5 and Theorem 5.2]{Wu20}) this implies that $b(s)\in B^{-e_i}_f$ if and only if $b(-s-\one_r-e_i)\in B^{-e_i}_f$. The polynomial $P_{D,i}$ visibly has this symmetry: the involution $s\mapsto-s-\one_r-e_i$ maps $L_{W,\nu}$ to $-L_{W,\nu'}$ with $\nu'=|J(W,f)|-2\rank(W)+1-\nu$, for every dense edge $W$ with $i\in J(W,f)$.
\end{remark}

\begin{corollary}\label{cor:unit-zero-locus}
Let $D$ be a free central arrangement with complete factorization $f$. Then $Z(B^{-e_i}_f)$ is the union of the hyperplanes $(L_{W,\nu}=0)$ over the dense edges $W$ with $i\in J(W,f)$ and $0\leq\nu\leq|J(W,f)|-2\rank(W)+1$, and its irreducible components are exactly the projections of the components $T^*_WX\times(L_{W,\nu}=0)$ of $\Chrel(M^{-e_i}_f)$.
\end{corollary}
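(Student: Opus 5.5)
The first assertion, the description of $Z(B^{-e_i}_f)$, follows from Theorem \ref{thm:main-unit}. Since $B^{-e_i}_f=(P_{D,i})$ and $P_{D,i}$ is the product of the pairwise non-associate linear forms $L_{W,\nu}$ (non-associate as shown in the lower-bound argument), the zero locus is exactly the union of the hyperplanes $(L_{W,\nu}=0)$. These are the irreducible components, because distinct hyperplanes do not contain one another.

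For the second assertion I plan to show, for each dense $W$ with $i\in J(W,f)$ and each $0\le\nu\le\rho_W:=|J(W,f)|-2\rank(W)+1$, that $\Gamma_{W,\nu}:=T^*_WX\times(L_{W,\nu}=0)$ is an irreducible component of $\Chrel(M^{-e_i}_f)$. Every other component should project to a proper closed subset of some hyperplane of $Z(B^{-e_i}_f)$, so that the projections of components which are themselves irreducible components of $Z$ are exactly these.

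\emph{Step 1 (components exist).} I will localize at the prime $\mathfrak q=(L_{W,\nu})$ and at a point $x\in W^\circ$. Lemma \ref{lem:unit-factors} and the proof of Lemma \ref{lem:localization-edge} identify $(M^{-e_i}_f)_x$ with $M^{-e_i}_{f_W}$ pulled back along the smooth projection $X\to X/W$ near $x$, with scalars extended from $\C[s_J]$ to $\C[s]$; here $J=J(W,f)$. Because the projection is smooth, relative characteristic cycles pull back: near $x$ a component $T^*_{\{0\}}(X/W)\times S$ becomes $T^*_WX\times(S\times\C^{r-|J|})$, with the same multiplicity. Theorem \ref{thm:local}, applied to the essential irreducible free arrangement $D^W$, gives multiplicity $h_\nu>0$ for $\nu\le\rho_W$. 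Hence $\Gamma_{W,\nu}$ meets $\Chrel(M^{-e_i}_f)$ in an open subset of itself. Since $\Gamma_{W,\nu}$ is irreducible of dimension $n+r-1$, which is the maximal dimension by Theorem \ref{thm:Wu-basic}(2), it is a component.

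\emph{Step 2 (no other components project onto these hyperplanes).} By Theorem \ref{thm:Wu-basic}(1), every component of $\Chrel(M^{-e_i}_f)$ has the form $T^*_{W'}X\times S$ with $W'\in L(D)$. Suppose $S=(L_{W,\nu}=0)$. I would localize at $\mathfrak q=(L_{W,\nu})$ and repeat the upper-bound argument of Section \ref{sec:global}. That argument shows that for any point $y$ with $((M^{-e_i}_f)_y)_{\mathfrak q}\neq0$ we have $J(W'',f)\supseteq T=J(W,f)$ for the edge $W''$ with $y\in W''^\circ$. Moreover, on the generic stratum where $J_y=J(W,f)$ the module is supported on $T^*_WX$. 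I expect this to be the main obstacle. The difficulty is to rule out components $T^*_{W'}X\times S$ with $W'\subsetneq W$ (deeper edges) and $S$ equal to this same hyperplane.

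To handle it, I would use the reduction above. Localized at $\mathfrak q$, the variable $f_h$ acts invertibly for $h\notin J(W,f)$ by Corollary \ref{cor:invertible}. So the localized module is supported on the closure of the locus where only the $f_j$ with $j\in J(W,f)$ vanish; this closure is $W$ itself. Within $W$, the module is the smooth pullback from $X/W$ of $M^{-e_i}_{f_W}$. Since $L_{W,\nu}$ satisfies the hypothesis of Lemma \ref{lem:support-origin}, the localization of $M^{-e_i}_{f_W}$ at $\mathfrak q$ is supported at the origin of $X/W$. Thus the localized cycle equals $h_\nu[T^*_WX\otimes\K]$ along the whole of $W$, including over deeper edges, by a global version of the pullback argument on $X\cong X/W\times W$. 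Consequently only $\Gamma_{W,\nu}$ lies over this hyperplane. This completes the identification of the irreducible components of $Z$ with the projections of the $\Gamma_{W,\nu}$.
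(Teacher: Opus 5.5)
Your first paragraph and Step 1 are correct and are essentially the paper's proof. Theorem \ref{thm:local}, applied to $D^W$, gives the component $T^*_{\{0\}}(X/W)\times(L_{W,\nu}=0)$ of $\Chrel(M^{-e_i}_{f_W})$. Its pullback near a general point of $W$ is $\Gamma_{W,\nu}$, and locality of $\Chrel$ makes it a component of $\Chrel(M^{-e_i}_f)$; your dimension count is a fine way to make this precise. That is all the corollary asserts: the irreducible components of $Z(B^{-e_i}_f)$ are exactly the sets $p_2(\Gamma_{W,\nu})$.

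Step 2 should be dropped. It is not needed, and the uniqueness it aims at is false. Take $f=(x,y)$ on $\C^2$ and $i=1$. By Theorem \ref{thm:log-ann},
\[
M^{-e_1}_f\simeq\big(\sD_{\C_x}/\sD_{\C_x}x\otimes_\C\C[s_1]/(s_1+1)\big)\boxtimes\sD_{\C_y}[s_2]y^{s_2},
\]
and $\Chrel(\sD_{\C_y}[s_2]y^{s_2})=\{y\eta=0\}\times\C$, since the symbol of $y\partial_y-s_2$ is $y\eta$. Hence
\[
\Chrel(M^{-e_1}_f)=T^*_{\{x=0\}}X\times(s_1+1=0)\ \cup\ T^*_{\{0\}}X\times(s_1+1=0).
\]
The second component lies over the whole hyperplane $(L_{W,0}=0)$ with $W=\{x=0\}$. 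Yet $\{0\}\neq W$, and $\{0\}$ is not even a dense edge. So "only $\Gamma_{W,\nu}$ lies over this hyperplane" fails.

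The faulty step is "within $W$, the module is the smooth pullback from $X/W$." Corollary \ref{cor:invertible}, which you invoke, shows the localized module is \emph{localized} along $g=\prod_{h\notin J(W,f)}f_h$; it is not a pullback. Near points of $W\cap(g=0)$ it is therefore $\delta$-modules at the origin of $X/W$ tensored with a module of type $\sO_W(*(g|_W=0))\prod_{h\notin J(W,f)}f_h^{s_h}$ with generic exponents. In the example this second factor is $\K[y,y^{-1}]y^{s_2}$ with $\K=\C(s_2)$. Such a factor has characteristic components $T^*_{W'}X$ for edges $W'\subsetneq W$. So the localized cycle is not $h_\nu[T^*_WX\otimes\K]$ along all of $W$, and no "global pullback" on $X\simeq X/W\times W$ exists.
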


\begin{proof}
The first assertion is Theorem \ref{thm:main-unit}. For the last one, the proof of the lower bound shows that $(L_{W,\nu}=0)$ is the projection of the component $T^*_{\{0\}}(X/W)\times(L_{W,\nu}=0)$ of $\Chrel(M^{-e_i}_{f_W})$, whose pullback to a neighborhood of a general point of $W$ is $T^*_WX\times(L_{W,\nu}=0)$; since the characteristic variety is local on $X$, this is a component of $\Chrel(M^{-e_i}_f)$.
\end{proof}

\section{Coordinate monoid ideals}\label{sec:coord}

In this section $D$ is a central arrangement in $X=\C^n$ with complete factorization $f=(f_1,\dots,f_r)$; freeness is assumed only where stated. Let $W_0=\bigcap_{j=1}^rD_j$ be the center of $D$ and $\ell=\rank(W_0)=\codim_XW_0$. Since $W_0$ is the smallest edge, $D_{W_0}=D$ and $D^{W_0}$ is an essential arrangement in $X/W_0$. Choose a linear complement $V$ of $W_0$ in $X$, so that $X=V\oplus W_0$ and $V\simeq X/W_0$, and let \eqref{eq:product-decomp} be the decomposition of $D^{W_0}$ into essential irreducible factors $D^{(a)}$ in $V_a$, $V=V_1\oplus\dots\oplus V_c$; recall that $I_a$ denotes the set of indices $j$ with $D_j/W_0\in D^{(a)}$, $n_a=\dim V_a$, and $\lambda^{(a)}=\sum_{i\in I_a}s_i+n_a$. Choose linear coordinates $y_1,\dots,y_\ell$ on $V$ adapted to the decomposition $V=\bigoplus_aV_a$, and coordinates $z_1,\dots,z_{n-\ell}$ on $W_0$. Every $f_j$ is a linear form on $V$, i.e. depends only on the $y$-coordinates, and $D$ is the product of $D^{W_0}$ with the empty arrangement in $W_0$.

\begin{lemma}\label{lem:coordinate-ideal}
$(f_1,\dots,f_r)=(y_1,\dots,y_\ell)$ as ideals of $\C[X]$.
\end{lemma}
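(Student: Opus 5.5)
The plan is to prove the equality of ideals by a two-sided inclusion, working entirely with linear forms in the $y$-coordinates. Recall the setup: $X=V\oplus W_0$, each $f_j$ depends only on $y_1,\dots,y_\ell$, and $y_1,\dots,y_\ell$ are linear coordinates on $V\simeq X/W_0$.

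For the inclusion $(f_1,\dots,f_r)\subseteq(y_1,\dots,y_\ell)$, I will use that each $f_j$ is a linear form vanishing on $W_0$. Hence $f_j$ lies in the $\C$-span of $y_1,\dots,y_\ell$, and so in the ideal they generate.

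For the reverse inclusion, I will show that $f_1,\dots,f_r$ span the space $(X/W_0)^*=\Span_\C\{y_1,\dots,y_\ell\}$ of linear forms vanishing on $W_0$. If the span $S$ of the $f_j$ were a proper subspace, its common zero locus would be a subspace strictly larger than $W_0$. That locus is $\bigcap_j D_j=W_0$, which is a contradiction. The same point can be phrased as the essentiality of $D^{W_0}$, which was used in the proof of Lemma \ref{lem:exponent-one}(1): the $f_j$ span the dual of $X/W_0$. It follows that each $y_\alpha$ is a $\C$-linear combination of the $f_j$, so $y_\alpha\in(f_1,\dots,f_r)$.

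This is linear algebra and I expect no real obstacle. The only point that needs care is identifying the span of the $f_j$ with all linear forms vanishing on $W_0$, which amounts to the duality statement that for a subspace $S\subseteq X^*$ one has $\dim S=\codim_X S^\perp$.
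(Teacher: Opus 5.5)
Your proposal is correct and follows the paper's proof: both note that the $f_j$ lie in the $\ell$-dimensional space $W_0^\perp$ of linear forms vanishing on $W_0$, which is spanned by $y_1,\dots,y_\ell$. Both then use that the common kernel of the $f_j$ is exactly $W_0$ to conclude that the $f_j$ span $W_0^\perp$, so the two sets of linear forms generate the same ideal.
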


\begin{proof}
The linear forms vanishing on $W_0$ form the subspace $W_0^\perp\subseteq X^*$, of dimension $\ell$, with basis $y_1,\dots,y_\ell$. The $f_j$ lie in $W_0^\perp$, and their common kernel is $W_0$; hence they span $W_0^\perp$, and the ideals generated by two spanning sets of the same subspace of linear forms coincide.
\end{proof}

\subsection{The full coordinate quotient}\label{sec:full-coordinate}
Let
\begin{equation}\label{eq:L}
\sL:=\sD_X\big(y_1,\dots,y_\ell,\partial_{z_1},\dots,\partial_{z_{n-\ell}}\big)\subseteq\sD_X ,
\end{equation}
so that $\sD_X/\sL$ is the pullback to $X$ of the $\delta$-module at the origin of $X/W_0$.

\begin{theorem}\label{thm:full-coordinate}
Let $D$ be a central arrangement with complete factorization $f$, and let $\Lambda=(\lambda^{(1)},\dots,\lambda^{(c)})\subseteq\C[s]$.
\begin{enumerate}
\item $\Lambda\subseteq B^{K_{[r]}}_f$; that is, $\lambda^{(a)}f^s\in\sum_{j=1}^r\sD_X[s]f^{s+e_j}$ for $a=1,\dots,c$.
\item If $D$ is free, then there is an isomorphism of $\sD_X[s]$-modules
\begin{equation}\label{eq:N0-structure}
N^{K_{[r]}}_f\simeq(\sD_X/\sL)\otimes_\C\C[s]/\Lambda ,
\end{equation}
and consequently
\begin{equation}\label{eq:full-coordinate-formula}
B^{K_{[r]}}_f=\big(\lambda^{(1)},\dots,\lambda^{(c)}\big)=\Big(\sum_{i\in I_1}s_i+n_1,\ \dots,\ \sum_{i\in I_c}s_i+n_c\Big).
\end{equation}
\end{enumerate}
\end{theorem}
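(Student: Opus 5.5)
We prove (1) with an explicit Euler-type identity, and (2) by reducing the cyclic presentation of Lemma \ref{lem:cyclic} modulo the left ideal generated by the linear forms $y_1,\dots,y_\ell$.

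\emph{Part (1).} Fix $a$ and let $y_k$, $k\in\kappa_a$, be the coordinates on $V_a$, so $|\kappa_a|=n_a$. Let $E_a=\sum_{k\in\kappa_a}y_k\partial_{y_k}$ be the Euler field of $V_a$. Each $f_j$ with $j\in I_a$ is a linear form on $V_a$, and each $f_j$ with $j\notin I_a$ involves no $y_k$ with $k\in\kappa_a$. Hence
\[
E_af^s=\Big(\sum_{i\in I_a}s_i\Big)f^s .
\]
Rewrite $E_a=\sum_{k\in\kappa_a}\partial_{y_k}y_k-n_a$, which gives
\[
\lambda^{(a)}f^s=\sum_{k\in\kappa_a}\partial_{y_k}\,y_kf^s .
\]
Since $D^{(a)}$ is essential in $V_a$, the forms $f_j$, $j\in I_a$, span $V_a^*$. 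This is the argument of Lemma \ref{lem:coordinate-ideal} applied to $D^{(a)}$. So each $y_k$ with $k\in\kappa_a$ is a $\C$-linear combination of these $f_j$, and $y_kf^s\in\sum_j\C f^{s+e_j}$. Therefore $\lambda^{(a)}f^s\in\sum_j\sD_X[s]f^{s+e_j}$.

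\emph{Part (2).} Assume $D$ is free. Since $D$ is the product of $D^{W_0}$ with the empty arrangement on $W_0$, Theorem \ref{thm:free-facts}(2) and Lemma \ref{lem:exponent-one}(3) give a homogeneous basis of $\Der(-\log D)$:
\[
\{E_a,\theta_{a,2},\dots,\theta_{a,n_a}\}_{a=1,\dots,c}\cup\{\partial_{z_1},\dots,\partial_{z_{n-\ell}}\},\qquad \deg\theta_{a,j}\geq 2 .
\]
The $\theta_{a,j}$ only involve $y$ and $\partial_y$, and we may take them to be fields on $V$. Theorem \ref{thm:log-ann}, Lemma \ref{lem:cyclic} and Lemma \ref{lem:coordinate-ideal} then present $N^{K_{[r]}}_f$ as $\sD_X[s]/\mathcal J$ with
\[
\mathcal J=\sD_X[s]\big(y_1,\dots,y_\ell,\ \wt E_a,\ \wt\theta_{a,j},\ \partial_{z_k}\big),
\]
where $\wt\partial_{z_k}=\partial_{z_k}$ because no $f_j$ depends on $z$. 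We reduce these generators modulo $\sD_X[s](y_1,\dots,y_\ell)$.

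\begin{enumerate}
\item The Euler generator: $\wt E_a=E_a-\sum_{i\in I_a}s_i\equiv-\lambda^{(a)}$, because $\partial_{y_k}y_k\in\sD_X(y)$.
\item The quotients $\varrho_{a,j,i}=\theta_{a,j}(f_i)/f_i$ are homogeneous of degree $\geq1$, so they lie in $\sD_X(y)$.
\item Write a coefficient $p$ of $\theta_{a,j}$, of degree $\geq 2$, as $p=\sum_k q_ky_k$ with $\deg q_k\geq1$. Then
\[
p\,\partial_{y_m}=\sum_k q_k\partial_{y_m}y_k-q_m ,
\]
and both pieces lie in $\sD_X(y)$.
\end{enumerate}
Hence $\wt\theta_{a,j}\in\sD_X[s](y)$, and
\[
\mathcal J=\sL[s]+\sD_X[s]\Lambda .
\]
Since $\Lambda$ is central, this yields \eqref{eq:N0-structure}:
\[
\sD_X[s]/\mathcal J\simeq(\sD_X/\sL)\otimes_\C\C[s]/\Lambda .
\]
Finally, $\sD_X/\sL\neq0$ is a nonzero $\C$-vector space, so the $\C[s]$-annihilator of the right-hand side is exactly $\Lambda$, which is \eqref{eq:full-coordinate-formula}.

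The main point to check carefully is the first reduction step: that the product basis really has the stated shape, i.e. that the $\theta_{a,j}$ can be chosen with coefficients of degree $\geq2$ depending only on $y$, with no $\partial_z$-components. Once that is in place, everything is the degree bookkeeping of steps 1 to 3 above, which is routine.
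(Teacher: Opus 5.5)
Your proof is correct and follows the paper's argument essentially step by step. For (1) you use the same Euler identity $\lambda^{(a)}f^s=\sum_k\partial_{y_k}(y_kf^s)$. For (2) you use the same product Saito basis and the same reduction of $\wt E_a$, $\wt\theta_{a,j}$, $\partial_{z_k}$ modulo $\sD_X[s](y_1,\dots,y_\ell)$, which gives $\sL[s]+\sD_X[s]\Lambda$. The only differences are cosmetic: you write the $y_k$ on $V_a$ as combinations of the $f_j$ with $j\in I_a$ instead of citing Lemma \ref{lem:coordinate-ideal}, and you use an equivalent Weyl-algebra rewriting of $p\,\partial_{y_m}$.
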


\begin{proof}
Let $E_a=\sum_{y_j\in V_a}y_j\partial_{y_j}$ be the Euler vector field of $V_a$, viewed as a vector field on $X$. For $i\in I_a$ the linear form $f_i$ depends only on the coordinates of $V_a$, so $E_a(f_i)=f_i$, while $E_a(f_i)=0$ for $i\notin I_a$. Hence $E_a$ is logarithmic and
\begin{equation}\label{eq:Euler-tilde}
\wt E_a=E_a-\sum_{i\in I_a}s_i\qquad\text{annihilates } f^s .
\end{equation}
By the Weyl commutation relations, $E_a+n_a=\sum_{y_j\in V_a}\partial_{y_j}y_j$. Therefore
\[
\lambda^{(a)}f^s=(E_a+n_a)f^s-\wt E_af^s=\sum_{y_j\in V_a}\partial_{y_j}\big(y_jf^s\big),
\]
and $y_jf^s\in\sum_i\sD_X[s]f^{s+e_i}$ because $y_j\in(f_1,\dots,f_r)$ by Lemma \ref{lem:coordinate-ideal} and $f_if^s=f^{s+e_i}$. This proves (1).

Now assume that $D$ is free. By Lemma \ref{lem:exponent-one}(3) applied to $D^{W_0}$, together with Theorem \ref{thm:free-facts}(2) for the product of $D^{W_0}$ with the empty arrangement in $W_0$, the vector fields
\[
E_a,\ \theta_{a,2},\dots,\theta_{a,n_a}\ (a=1,\dots,c),\qquad \partial_{z_1},\dots,\partial_{z_{n-\ell}}
\]
form a homogeneous basis of $\Der(-\log D)$, where each $\theta_{a,j}$ has coefficient degree at least $2$ and involves only the coordinates of $V_a$. By Theorem \ref{thm:log-ann}, Lemma \ref{lem:cyclic} and Lemma \ref{lem:coordinate-ideal},
\begin{equation}\label{eq:N0-presentation}
N^{K_{[r]}}_f\simeq\frac{\sD_X[s]}{\sD_X[s]\big(\wt E_a,\wt\theta_{a,j},\wt\partial_{z_q}\big)+\sD_X[s](y_1,\dots,y_\ell)} .
\end{equation}
We compare the left ideal in the denominator with $\sL[s]+\sD_X[s]\Lambda$, where $\sL[s]=\sL\otimes_\C\C[s]$. First, every $f_i$ is independent of the $z$-coordinates, so $\wt\partial_{z_q}=\partial_{z_q}\in\sL$. Second, let $\theta=\sum_ja_j(y)\partial_{y_j}$ be one of the $\theta_{a,j}$, of coefficient degree $d\geq 2$, and put $b_i=\theta(f_i)/f_i$, a homogeneous polynomial of degree $d-1\geq 1$. Then $a_j$, $\partial a_j/\partial y_j$ and $b_i$ all lie in the ideal $(y_1,\dots,y_\ell)$, and the Weyl relation $a_j\partial_{y_j}=\partial_{y_j}a_j-\partial a_j/\partial y_j$ shows that
\begin{equation}\label{eq:theta-in-L}
\wt\theta=\theta-\sum_is_ib_i\in\sD_X[s](y_1,\dots,y_\ell)\subseteq\sL[s].
\end{equation}
Third, by \eqref{eq:Euler-tilde} and the relation $E_a+n_a=\sum_{y_j\in V_a}\partial_{y_j}y_j\in\sL$,
\begin{equation}\label{eq:Euler-in-L}
\wt E_a+\lambda^{(a)}=E_a+n_a\in\sL[s].
\end{equation}
These three facts give the inclusion ``$\subseteq$'' in
\begin{equation}\label{eq:ideal-equality}
\sD_X[s]\big(\wt E_a,\wt\theta_{a,j},\wt\partial_{z_q}\big)+\sD_X[s](y_1,\dots,y_\ell)=\sL[s]+\sD_X[s]\Lambda .
\end{equation}
Conversely, the generators $y_j$ of $\sL$ are present on the left, the $\partial_{z_q}$ are among the logarithmic generators, and $\lambda^{(a)}=(E_a+n_a)-\wt E_a$ lies in the left-hand side by \eqref{eq:Euler-in-L}, since $E_a+n_a\in\sD_X(y_1,\dots,y_\ell)$. This proves \eqref{eq:ideal-equality}. Since the $\lambda^{(a)}$ are central, \eqref{eq:N0-presentation} and \eqref{eq:ideal-equality} give
\[
N^{K_{[r]}}_f\simeq\frac{\sD_X\otimes_\C\C[s]}{\sL\otimes_\C\C[s]+\sD_X\otimes_\C\Lambda}\simeq(\sD_X/\sL)\otimes_\C\C[s]/\Lambda ,
\]
which is \eqref{eq:N0-structure}. Finally, $\sD_X/\sL$ is a nonzero $\C$-vector space, so $(\sD_X/\sL)\otimes_\C\C[s]/\Lambda$ is a nonzero direct sum of copies of $\C[s]/\Lambda$ as a $\C[s]$-module; a polynomial annihilates it if and only if its image in $\C[s]/\Lambda$ is zero. Hence $B^{K_{[r]}}_f=\Lambda$.
\end{proof}

\begin{corollary}\label{cor:essential-irreducible}
If $D$ is a free, essential and irreducible central arrangement in $\C^n$ with complete factorization $f$, then
\[
N^{K_{[r]}}_f\simeq\Delta\otimes_\C\C[s]\Big/\Big(\sum_{i=1}^rs_i+n\Big)\qquad\text{and}\qquad B^{K_{[r]}}_f=\Big(\sum_{i=1}^rs_i+n\Big).
\]
\end{corollary}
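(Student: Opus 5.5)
This is a direct specialization of Theorem \ref{thm:full-coordinate}(2), so the plan is to identify the data appearing there when $D$ is essential and irreducible.

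First, the center. Since $D$ is essential, $W_0=\bigcap_jD_j=\{0\}$. Hence $\ell=n$, $V=X$, and there are no $z$-coordinates. Since $D$ is irreducible, the decomposition \eqref{eq:product-decomp} has a single factor. So $c=1$, $I_1=\{1,\dots,r\}$ and $n_1=n$, which gives $\Lambda=(\sum_{i=1}^rs_i+n)$.

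Second, the module $\sD_X/\sL$. With $y_1,\dots,y_n$ linear coordinates on $X$ and no $\partial_z$ generators, \eqref{eq:L} becomes $\sL=\sD_X(y_1,\dots,y_n)$. Therefore $\sD_X/\sL$ is the $\delta$-module $\Delta$ at the origin, in the coordinates $y$. A linear change of coordinates does not affect the left ideal generated by the coordinate functions, since $(y_1,\dots,y_n)=(x_1,\dots,x_n)$ as ideals.

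Third, substitute into \eqref{eq:N0-structure}. This yields $N^{K_{[r]}}_f\simeq\Delta\otimes_\C\C[s]/(\sum_is_i+n)$. The formula $B^{K_{[r]}}_f=(\sum_is_i+n)$ is then the case $c=1$ of \eqref{eq:full-coordinate-formula}. It can also be read off directly: $\Delta\neq0$, so as a $\C[s]$-module the quotient is a nonzero direct sum of copies of $\C[s]/\Lambda$.

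There is no real obstacle. The only point needing care is to note that freeness is inherited in the form Theorem \ref{thm:full-coordinate}(2) requires. Here this is immediate, because $D=D^{W_0}$ itself is assumed free, and Lemma \ref{lem:exponent-one}(2) supplies the basis $E,\theta_2,\dots,\theta_n$ used in that proof.
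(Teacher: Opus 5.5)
Your proposal is correct and is the paper's own argument: the paper proves this corollary by specializing Theorem \ref{thm:full-coordinate}(2) with $W_0=\{0\}$, $c=1$, $I_1=\{1,\dots,r\}$, $n_1=n$ and $\sD_X/\sL=\Delta$. Your additional remarks are accurate: the left ideal generated by the coordinate functions does not depend on the choice of linear coordinates, and freeness of $D$ is the only hypothesis that theorem needs.
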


\begin{proof}
Here $W_0=\{0\}$, $c=1$, $I_1=\{1,\dots,r\}$, $n_1=n$ and $\sD_X/\sL=\Delta$.
\end{proof}

Theorem \ref{thm:main-coord} is Theorem \ref{thm:full-coordinate}(2) and Corollary \ref{cor:essential-irreducible}. Note that part (1) of Theorem \ref{thm:full-coordinate} holds for arbitrary central arrangements and contains \cite[Lemma 5.5]{Wu20}; freeness is used only to show that no further parameter relation occurs, via the logarithmic annihilator and the Poincar\'e-Birkhoff-Witt basis of $\sD_X/\sL$.

\section{A nonlinear component}\label{sec:counter}

In this section $X=\C^2$ with coordinates $x,y$, $\sD_X=\C\langle x,y,\partial_x,\partial_y\rangle$ is the Weyl algebra,
\begin{equation}\label{eq:four-lines}
f=(f_1,f_2,f_3,f_4)=(x,\ y,\ x+y,\ x+2y),\qquad f_D=xy(x+y)(x+2y),
\end{equation}
$\C[s]=\C[s_1,s_2,s_3,s_4]$, and $K=\mon{3e_1,3e_2}=(3e_1+\N^4)\cup(3e_2+\N^4)$. We write
\begin{equation}\label{eq:lambda-d}
\lambda_d:=s_1+s_2+s_3+s_4+2+d\qquad(d\in\Z),
\end{equation}
which agrees with \eqref{eq:lambda-nu} for $n=2$, and we let $\Phi$ be the polynomial \eqref{eq:Phi-intro}, i.e.
\begin{equation}\label{eq:Phi}
\begin{aligned}
\Phi&=(2s_1-s_2+s_3+1)(2s_1-s_2+s_3+4)+2(s_1+2)(s_2+2)\\
&=4s_1^2-2s_1s_2+4s_1s_3+s_2^2-2s_2s_3+s_3^2+14s_1-s_2+5s_3+12 .
\end{aligned}
\end{equation}

\begin{theorem}\label{thm:counter}
For $f$ and $K$ as above,
\begin{equation}\label{eq:counter-formula}
B^K_f=(\lambda_0)\cap(\lambda_1)\cap(\lambda_2)\cap(\lambda_3,\Phi)\cap(s_1+3,\,s_2+3,\,s_3,\,s_4).
\end{equation}
The five ideals on the right-hand side are prime, pairwise comaximal, and $B^K_f$ is radical. The ideal $(\lambda_3,\Phi)$ defines an irreducible nonlinear quadric surface, so $Z(B^K_f)$ is not a finite union of translated linear subvarieties.
\end{theorem}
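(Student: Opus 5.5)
The approach is to compute the finite $\C[s]$-module underlying $N^K_f$ explicitly through the relative Kashiwara equivalence, and then read off its annihilator.

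\emph{Step 1: presentation.} The arrangement is free with exponents $(1,3)$. A Saito basis is $E$ together with one derivation $\theta$ of degree $3$, which we normalize as in \S\ref{sec:adapted-basis} so that $\theta(x)=0$. By Theorem \ref{thm:log-ann} and Lemma \ref{lem:cyclic},
\[
N^K_f\simeq\sD_X[s]\big/\sD_X[s]\big(\wt E,\wt\theta,x^3,y^3\big).
\]
Since $x^3$ and $y^3$ lie in the defining left ideal, every element of $N^K_f$ is killed by a power of $(x,y)$. Lemma \ref{lem:kashiwara}(1) then gives $N^K_f\simeq\Delta\otimes_\C V$ with $\Ann_{\C[s]}N^K_f=\Ann_{\C[s]}V$. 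Here $V$ is a $\C[s]$-module which we must present explicitly.

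\emph{Step 2: a universal jet.} Via Lemma \ref{lem:kashiwara}(2), $\Hom(N^K_f,\Delta\otimes V')$ for an arbitrary $\C[s]$-module $V'$ is identified with the set of elements $v\in\Delta\otimes V'$ satisfying $\wt Ev=\wt\theta v=x^3v=y^3v=0$. The conditions $x^3v=y^3v=0$ force $v\in\bigoplus_{\beta_1,\beta_2\le2}\partial^\beta\delta_0\otimes V'$, which is the nine-dimensional truncation. On this truncation $\wt E$, $\wt\theta$, $x^3$ and $y^3$ act by explicit $9\times9$ matrices with entries in $\C[s]$. Taking their transposes and stacking them gives a matrix $A(s)$ with $V^\vee$-type behavior, and Yoneda's lemma identifies $V$ with $\coker$ of the transposed relations matrix. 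More precisely, $V\simeq\C[s]^9/\im(A^{T})$, where $A$ is the matrix of all four operators on the truncation. I would prove this as a separate proposition (the one the introduction calls Proposition \ref{prop:universal-jet}) by checking that the functor $V'\mapsto\{$solutions$\}$ is represented by that cokernel.

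\emph{Step 3: compute the annihilator.} $\wt E$ acts diagonally on $\partial^\beta\delta_0$ by $-(2+|\beta|)-\one\cdot s$. The truncation therefore splits by total degree $d=|\beta|\in\{0,\dots,4\}$ into blocks of sizes $1,2,3,2,1$. On the block of degree $d$, $\wt E$ contributes the relation $\lambda_d$, so $V=\bigoplus_dV_d$ with $\lambda_dV_d=0$. The blocks are coupled only through $\wt\theta$, which lowers $d$ by $2$; after the Euler splitting, however, the cokernel decomposes, because the $\lambda_d$ are pairwise comaximal. Each $V_d$ is then a cokernel over $\C[s]/(\lambda_d)$, cut out by the rows of $\wt\theta$ that land in it and by the truncation conditions. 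I would compute its annihilator through Fitting ideals, cross-checked with a Gröbner basis. I expect $(\lambda_d)$ for $d=0,1,2$, and $(\lambda_3,\Phi)$ for $d=3$, where $\Phi$ arises as a $2\times2$ determinant. For $d=4$ I expect $(\lambda_4)$ together with extra linear conditions, giving the point $(s_1+3,s_2+3,s_3,s_4)$; note that $\lambda_4$ vanishes there.

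\emph{Step 4: the remaining claims.} The ideals $(\lambda_d)$ are obviously prime. $(\lambda_3,\Phi)$ is prime because, after eliminating $s_4$, $\Phi$ is an irreducible quadric: its discriminant or rank is nonzero and it does not factor. Pairwise comaximality follows since the ideals lie over distinct values of $\one\cdot s$ (namely $-2,-3,-4,-5,-6$). The intersection is then a product of primes, hence radical, and $\Phi$ is visibly nonlinear. The main obstacle is the correctness of Step 2: showing that the truncated matrices present $V$ itself, not merely its fibers. A secondary difficulty is carrying out the determinant computations in Step 3 carefully enough to obtain exact ideals rather than only their radicals.
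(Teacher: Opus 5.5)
Your plan is correct and is essentially the paper's proof: logarithmic cyclic presentation, relative Kashiwara, the Yoneda-type identification of $V$ with $\coker(A^t)$ on the nine-dimensional jet truncation, splitting into degree blocks, and block-by-block annihilators. On the splitting, the paper uses the fact that each relation row of $L_\delta$ lives in a single degree block. Your comaximality argument works equally well, and your normalized operator $\wt\theta=\wt\delta-\varrho_1\wt E$ agrees with $L_\delta$ modulo $\lambda_d$ on each block.

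Your ``secondary difficulty'' is settled exactly as in the paper, using the primality you already prove in Step 4. Over $R=\C[s]/(\lambda_3)$, $\det M_3=64\Phi$ gives $(\Phi)\subseteq\Ann_R V_3\subseteq\sqrt{(\Phi)}$. The quadratic part of $\Phi$ has rank three, so $\Phi$ cannot split into two affine factors and $(\Phi)$ is prime, which forces equality. What remains is only the explicit computation of $M_2,M_3,M_4$.
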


Theorem \ref{thm:counter} is Theorem \ref{thm:main-counter}; the last assertion disproves Conjecture E for the coefficient module $\sO_X$, even for a reduced free central line arrangement. The purpose of the proof below is to make the computation entirely finite-dimensional and explicit.

\subsection{The logarithmic cyclic presentation}\label{sec:log-cyclic}
The derivations
\begin{equation}\label{eq:E-delta}
E=x\partial_x+y\partial_y,\qquad\delta=f_{D,y}\,\partial_x-f_{D,x}\,\partial_y,\qquad f_{D,x}=\partial_xf_D,\ f_{D,y}=\partial_yf_D ,
\end{equation}
satisfy $\det\begin{pmatrix}x&y\\ f_{D,y}&-f_{D,x}\end{pmatrix}=-xf_{D,x}-yf_{D,y}=-4f_D$. Since $\delta(f_D)=0$ and $f_D$ is reduced, $\delta$ is logarithmic, and Saito's criterion (Theorem \ref{thm:free-facts}(1)) shows that $E,\delta$ is a basis of $\Der(-\log D)$, with exponents $(1,3)$; thus $D$ is free, essential and irreducible, and $\rho=r-2n+1=1$ in the notation of \eqref{eq:rho}. Let $\varrho_i=\delta(f_i)/f_i$. Using $f_{D,y}=x^3+6x^2y+6xy^2$ and $f_{D,x}=3x^2y+6xy^2+2y^3$, direct differentiation gives
\begin{equation}\label{eq:rho-i}
\varrho_1=x^2+6xy+6y^2,\quad\varrho_2=-3x^2-6xy-2y^2,\quad\varrho_3=x^2+2xy-2y^2,\quad\varrho_4=x^2-2xy-2y^2 .
\end{equation}
The logarithmic operators annihilating $f^s$ are
\begin{equation}\label{eq:PE-Pdelta}
P_E:=\wt E=E-(s_1+s_2+s_3+s_4),\qquad P_\delta:=\wt\delta=\delta-\sum_{i=1}^4s_i\varrho_i ,
\end{equation}
and Theorem \ref{thm:log-ann} gives $\Ann_{\sD_X[s]}f^s=\sD_X[s](P_E,P_\delta)$. Since $f^{s+3e_1}=x^3f^s$ and $f^{s+3e_2}=y^3f^s$, Lemma \ref{lem:cyclic} yields
\begin{equation}\label{eq:NK-presentation}
N^K_f\simeq\frac{\sD_X[s]}{\sD_X[s](P_E,P_\delta,x^3,y^3)},\qquad B^K_f=\Ann_{\C[s]}N^K_f .
\end{equation}

\subsection{The jet model}\label{sec:jet-model}
Let $\Delta=\sD_X/\sD_X(x,y)$ be the $\delta$-module at the origin with cyclic vector $\delta_0$, and put $e_{ij}=\partial_x^i\partial_y^j\delta_0$ for $i,j\geq 0$. By \eqref{eq:delta-rules},
\begin{equation}\label{eq:jet-rules}
x\,e_{ij}=-i\,e_{i-1,j},\qquad y\,e_{ij}=-j\,e_{i,j-1},\qquad\partial_x\,e_{ij}=e_{i+1,j},\qquad\partial_y\,e_{ij}=e_{i,j+1}.
\end{equation}
Representing $e_{ij}$ by the monomial $u^iv^j$, so that $\Delta\simeq\C[u,v]$, a normally ordered monomial of the Weyl algebra acts by
\begin{equation}\label{eq:weyl-action}
x^ay^b\partial_x^c\partial_y^e\cdot w=(-\partial_u)^a(-\partial_v)^b\big(u^cv^ew\big)\qquad(w\in\C[u,v]).
\end{equation}
For $w=\sum a_{ij}e_{ij}$ the equation $x^3w=0$ is equivalent to $a_{ij}=0$ for $i\geq 3$, and $y^3w=0$ to $a_{ij}=0$ for $j\geq 3$. Thus the relevant subspace of $\Delta$ is the nine-dimensional \emph{jet space}
\begin{equation}\label{eq:J}
J:=\Span_\C\{u^iv^j\mid 0\leq i,j\leq 2\}=\bigoplus_{d=0}^4J_d,\qquad n_d:=\dim_\C J_d ,
\end{equation}
where $J_d$ is spanned by the monomials of total degree $d$, with the ordered bases
\begin{equation}\label{eq:jet-bases}
J_0=(1),\quad J_1=(v,u),\quad J_2=(v^2,uv,u^2),\quad J_3=(uv^2,u^2v),\quad J_4=(u^2v^2).
\end{equation}

\begin{lemma}\label{lem:jet-operators}
The operators $P_E$ and $P_\delta$ act on $J\otimes_\C\C[s]$ by the $\C[s]$-linear maps
\begin{align}
L_E&=-\big(u\partial_u+v\partial_v+\lambda_0\big),\qquad\text{so that } L_E|_{J_d}=-\lambda_d\cdot\mathrm{id},\label{eq:LE}\\
L_\delta&=(3v-6u)\partial_u^2\partial_v+(6v-6u)\partial_u\partial_v^2-\alpha_0\partial_u^2-\beta_0\partial_u\partial_v-\gamma_0\partial_v^2,\qquad L_\delta(J_d)\subseteq J_{d-2},\label{eq:Ldelta}
\end{align}
where
\begin{equation}\label{eq:abc}
\alpha_0=s_1-3s_2+s_3+s_4,\qquad\beta_0=6s_1-6s_2+2s_3-2s_4,\qquad\gamma_0=6s_1-2s_2-2s_3-2s_4 .
\end{equation}
\end{lemma}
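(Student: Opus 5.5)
The plan is a direct computation in the model $\Delta\simeq\C[u,v]$ of \eqref{eq:weyl-action}. A preliminary check is that $J\otimes_\C\C[s]$ is stable under $P_E$ and $P_\delta$. By \eqref{eq:jet-rules}, $J$ is exactly the subspace of $\Delta$ killed by $x^3$ and $y^3$. A normally ordered monomial $x^ay^b\partial_x$ changes the $u$-exponent by $1-a$ and the $v$-exponent by $-b$, and $x^ay^b\partial_y$ changes them by $-a$ and $1-b$; all other effects are lowerings. So $J$ could only be left through the monomials $y^3\partial_x$ and $x^3\partial_y$. The coefficient of $y^3$ in $f_{D,y}=x^3+6x^2y+6xy^2$ is $0$, and the coefficient of $x^3$ in $f_{D,x}=3x^2y+6xy^2+2y^3$ is $0$. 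Hence neither monomial occurs in $\delta$, and the multiplication operators $\varrho_i$ only lower exponents. It follows that both operators preserve $J\otimes\C[s]$. Since $E$ has degree $0$ and $\delta,\varrho_i$ lower total degree by $2$, they respect the grading with the stated shifts.

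\emph{The Euler operator.} By \eqref{eq:weyl-action}, $x\partial_x w=-\partial_u(uw)=-(u\partial_u+1)w$, and similarly for $y\partial_y$. Hence $E$ acts by $-(u\partial_u+v\partial_v+2)$ and $P_E$ acts by $-(u\partial_u+v\partial_v+2+\one_4\cdot s)=-(u\partial_u+v\partial_v+\lambda_0)$. On $J_d$ the operator $u\partial_u+v\partial_v$ is multiplication by $d$, which gives $-\lambda_d$ and proves \eqref{eq:LE}.

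\emph{The operator $P_\delta$.} First, the term $-\sum_is_i\varrho_i$ acts through its quadratic monomials, with $x^2\mapsto\partial_u^2$, $xy\mapsto\partial_u\partial_v$ and $y^2\mapsto\partial_v^2$ (two minus signs cancel). Collecting the coefficients in \eqref{eq:rho-i} gives
\[
\textstyle\sum_is_i\varrho_i=\alpha_0x^2+\beta_0xy+\gamma_0y^2,
\]
with $\alpha_0,\beta_0,\gamma_0$ as in \eqref{eq:abc}. This produces the second-order constant-coefficient part of \eqref{eq:Ldelta}.

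Second, I write $\delta=\sum_{a+b=3}\big(c_{ab}x^ay^b\partial_x-c'_{ab}x^ay^b\partial_y\big)$. Each term acts by $\mp\partial_u^a\partial_v^b(u\,\cdot)$ or $\mp\partial_u^a\partial_v^b(v\,\cdot)$, and I expand it with the Leibniz rule $\partial_u^a(uw)=u\partial_u^aw+a\partial_u^{a-1}w$. The resulting third-order terms have coefficients in $u,v$. On $J$ the operators $\partial_u^3$ and $\partial_v^3$ vanish, so only the $\partial_u^2\partial_v$ and $\partial_u\partial_v^2$ terms survive. The commutator terms are second order with constant coefficients. The claim to verify is that these commutator contributions cancel, that is,
\[
\textstyle\sum_a a\,c_{ab}\,\partial_u^{a-1}\partial_v^{b}-\sum_b b\,c'_{ab}\,\partial_u^a\partial_v^{b-1}=0.
\]
This is the Weyl-algebra shadow of $\delta$ being divergence-free: $\partial_x f_{D,y}-\partial_y f_{D,x}=0$.

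The main obstacle is purely bookkeeping, namely keeping the signs straight in \eqref{eq:weyl-action} and in the Leibniz expansion. I would double-check the outcome, $(3v-6u)\partial_u^2\partial_v+(6v-6u)\partial_u\partial_v^2$, by evaluating $L_\delta$ directly on a few basis vectors, for instance $u^2v$ and $uv^2$, using \eqref{eq:jet-rules}.
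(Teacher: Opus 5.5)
Your proposal is correct and is essentially the paper's proof: you work in the model $\Delta\simeq\C[u,v]$ of \eqref{eq:weyl-action}, expand the action of $\delta$ by the Leibniz rule, note that the second-order commutator terms cancel and that the $\partial_u^3$ and $\partial_v^3$ terms vanish on $J$, and read off $\alpha_0,\beta_0,\gamma_0$ from \eqref{eq:rho-i}. Two of your additions go beyond the paper. Identifying the cancelling terms with the divergence $\partial_xf_{D,y}-\partial_yf_{D,x}=0$ explains conceptually what the paper checks term by term. Your a priori stability check for $J$ is even easier than you state, since $y^3\partial_x$ and $x^3\partial_y$ would annihilate $J$ anyway.
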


\begin{proof}
By \eqref{eq:weyl-action}, $x\partial_x$ and $y\partial_y$ act by $-\partial_uu=-(1+u\partial_u)$ and $-(1+v\partial_v)$, which gives \eqref{eq:LE}. For $P_\delta$, the quadratic forms $\varrho_i$ act by the corresponding second order operators $\partial_u^2,\partial_u\partial_v,\partial_v^2$ (the two signs cancel), and $\alpha_0,\beta_0,\gamma_0$ are the coefficients of $x^2$, $xy$, $y^2$ in $\sum_is_i\varrho_i$ by \eqref{eq:rho-i}. The operator $\delta=f_{D,y}\partial_x-f_{D,x}\partial_y$ acts by
\[
-\partial_u^3(u\,\cdot)-6\partial_u^2\partial_v(u\,\cdot)-6\partial_u\partial_v^2(u\,\cdot)+3\partial_u^2\partial_v(v\,\cdot)+6\partial_u\partial_v^2(v\,\cdot)+2\partial_v^3(v\,\cdot),
\]
and expanding with the product rule, the lower order terms $-3\partial_u^2+3\partial_u^2$, $-12\partial_u\partial_v+12\partial_u\partial_v$ and $-6\partial_v^2+6\partial_v^2$ cancel, while the terms $-u\partial_u^3$ and $2v\partial_v^3$ vanish on $J$. This gives \eqref{eq:Ldelta}; the formula visibly lowers the total degree by two, and one checks on the monomials of $J$ that it maps $J_d$ into $J_{d-2}$ (with $J_{-1}=J_{-2}=0$).
\end{proof}

In the bases \eqref{eq:jet-bases}, the nonzero blocks of $L_\delta$ before imposing any Euler relation are
\begin{equation}\label{eq:tilde-M}
\wt M_2=\begin{pmatrix}-2\gamma_0&-\beta_0&-2\alpha_0\end{pmatrix},\qquad
\wt M_3=\begin{pmatrix}12-2\beta_0&6-2\alpha_0\\-12-2\gamma_0&-12-2\beta_0\end{pmatrix},\qquad
\wt M_4=\begin{pmatrix}12-2\alpha_0\\-4\beta_0\\-24-2\gamma_0\end{pmatrix},
\end{equation}
where $\wt M_d$ is the $n_{d-2}\times n_d$ matrix of $L_\delta|_{J_d}\colon J_d\to J_{d-2}$; the blocks for $d=0,1$ are zero. Thus, with respect to the basis $(1,v,u,v^2,uv,u^2,uv^2,u^2v,u^2v^2)$ of $J$,
\begin{equation}\label{eq:LE-matrix}
[L_E]=\operatorname{diag}\big(-\lambda_0,-\lambda_1,-\lambda_1,-\lambda_2,-\lambda_2,-\lambda_2,-\lambda_3,-\lambda_3,-\lambda_4\big),
\end{equation}
and $[L_\delta]$ is the $6\times 9$ matrix, with rows indexed by the basis $(1,v,u,v^2,uv,u^2)$ of $J_{\leq 2}=J_0\oplus J_1\oplus J_2$, whose only nonzero entries are the blocks $\wt M_2$ (row $1$, columns $4$--$6$), $\wt M_3$ (rows $2$--$3$, columns $7$--$8$) and $\wt M_4$ (rows $4$--$6$, column $9$).

\subsection{The universal jet presentation}\label{sec:universal-jet}
Every element of $N^K_f$ is annihilated by a power of $(x,y)$, because $x^3$ and $y^3$ annihilate the cyclic generator and $\sD_X[s]$ is generated by $\partial_x,\partial_y$ over $\C[x,y,s]$. By Lemma \ref{lem:kashiwara},
\begin{equation}\label{eq:NK-Kashiwara}
N^K_f\simeq\Delta\otimes_\C\sV,\qquad\sV:=\{w\in N^K_f\mid xw=yw=0\},\qquad B^K_f=\Ann_{\C[s]}\sV .
\end{equation}
The following proposition upgrades the fiberwise rank computation on the jet space to a presentation of the $\C[s]$-module $\sV$ itself; this is the conceptual bridge from the $\sD_X[s]$-module $N^K_f$ to finite-dimensional linear algebra.

\begin{proposition}\label{prop:universal-jet}
Let $J_{\C[s]}=J\otimes_\C\C[s]$ and $(J_{\leq 2})_{\C[s]}=J_{\leq 2}\otimes_\C\C[s]$, and consider the $\C[s]$-linear map
\[
A\colon J_{\C[s]}\to J_{\C[s]}\oplus(J_{\leq 2})_{\C[s]},\qquad A(z)=(L_Ez,L_\delta z),
\]
whose matrix in the bases \eqref{eq:jet-bases} is the $15\times 9$ matrix $[A]=\binom{[L_E]}{[L_\delta]}$. Then
\begin{equation}\label{eq:V-coker}
\sV\simeq\coker\big(A^t\colon\C[s]^{15}\to\C[s]^{9}\big).
\end{equation}
Moreover, if $M_d$ denotes the reduction of $\wt M_d$ modulo $\lambda_d$ (obtained by the substitution $s_4=-d-2-s_1-s_2-s_3$), then
\begin{equation}\label{eq:V-blocks}
\sV\simeq\bigoplus_{d=0}^4\sV_d,\qquad\sV_d=\frac{\C[s]^{n_d}}{\lambda_d\,\C[s]^{n_d}+\im\big(M_d^t\big)} .
\end{equation}
\end{proposition}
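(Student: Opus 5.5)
The plan is a Yoneda-type argument: identify the functor represented by $\sV$ on $\C[s]$-modules, and show that the cokernel of $A^t$ represents the same functor. By Lemma \ref{lem:kashiwara}(1), $N^K_f\simeq\Delta\otimes_\C\sV$. For any $\C[s]$-module $V'$, Lemma \ref{lem:kashiwara}(2) then gives
\[
\Hom_{\sD_X[s]}(N^K_f,\Delta\otimes_\C V')\simeq\Hom_{\C[s]}(\sV,V').
\]
The cyclic presentation \eqref{eq:NK-presentation} computes the left-hand side in a second way. It is the set of elements $w\in\Delta\otimes_\C V'$ with
\[
P_Ew=P_\delta w=x^3w=y^3w=0,
\]
where $s$ acts on $V'$. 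By the jet rules \eqref{eq:jet-rules}, the conditions $x^3w=y^3w=0$ say exactly that $w\in J\otimes_\C V'$. On this subspace $P_E$ and $P_\delta$ act through $L_E$ and $L_\delta$ of Lemma \ref{lem:jet-operators}, with $s$ acting on $V'$; the computation in that lemma is purely formal in $s$, so it remains valid with coefficients in $V'$. Hence the left-hand side is $\ker\big(A\otimes_{\C[s]}V'\big)$, and writing it out in coordinates gives
\[
\{z\in V'^{9}\mid [A]z=0\}\simeq\Hom_{\C[s]}\big(\coker(A^t),V'\big).
\]
The last identification uses that $\Hom(\coker(A^t),V')$ is the set of $z\in\Hom(\C[s]^9,V')=V'^9$ killed by precomposition with $A^t$, i.e. with $[A]z=0$.

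Both isomorphisms are natural in $V'$, so $\Hom_{\C[s]}(\sV,-)\simeq\Hom_{\C[s]}(\coker A^t,-)$ as functors on all $\C[s]$-modules, and Yoneda gives $\sV\simeq\coker(A^t)$, which is \eqref{eq:V-coker}. The main obstacle is the naturality check: the isomorphism from Lemma \ref{lem:kashiwara}(2) and the evaluation of a homomorphism on the class of $1$ must be compatible. I would settle this by tracking the element $w=\phi(1)$. Under Lemma \ref{lem:kashiwara}(2), $\phi$ corresponds to $\mathrm{id}\otimes\psi$, and $\phi(1)=(\mathrm{id}\otimes\psi)(\bar1)$, where $\bar 1\in\Delta\otimes\sV$ is the image of the generator. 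This expression is visibly natural in $V'$. Alternatively, one can note directly that the element $\bar1$ lies in $J\otimes\sV$ and satisfies $A\bar1=0$; it therefore defines a map $\coker A^t\to\sV$, and one checks that this map represents the bijection.

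For \eqref{eq:V-blocks}, I use the block structure of $[A]$. The matrix $[L_E]$ is diagonal, with entry $-\lambda_d$ on $J_d$, and $L_\delta$ maps $J_d\to J_{d-2}$ through the blocks $\wt M_d$. So $\coker(A^t)$ is the quotient of $\bigoplus_d\C[s]^{n_d}$ by the relations $\lambda_d e=0$ for $e$ in the $J_d$-summand, together with the images under $\wt M_d^t$ of the rows indexed by $J_{d-2}$. Each relation $\wt M_d^t(\cdot)$ lands only in the $J_d$-summand. Hence the cokernel splits as a direct sum over $d$ of $\C[s]^{n_d}/(\lambda_d\C[s]^{n_d}+\im\wt M_d^t)$. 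Modulo $\lambda_d$, the matrix $\wt M_d$ may be replaced by its reduction $M_d$, because $\im\wt M_d^t$ and $\im M_d^t$ agree modulo $\lambda_d\C[s]^{n_d}$. For $d=0,1$ the blocks vanish, which gives $\sV_d=(\C[s]/\lambda_d)^{n_d}$, consistent with the formula.
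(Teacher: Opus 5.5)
Your proposal is correct and follows the paper's proof essentially step for step. The steps are: the natural isomorphism $\Hom_{\C[s]}(\sV,-)\simeq\Hom_{\sD_X[s]}(N^K_f,\Delta\otimes_\C -)$ from Lemma \ref{lem:kashiwara}(2); evaluation at the class of $1$ via the cyclic presentation; truncation to the jet space by $x^3,y^3$; identifying $\ker(A\otimes_{\C[s]}V')$ with $\Hom_{\C[s]}(\coker A^t,V')$ and applying Yoneda; and the block splitting, with $\wt M_d$ replaced by $M_d$ modulo $\lambda_d$. Your explicit tracking of $w=\phi(\bar 1)$ only spells out the naturality that the paper asserts without detail.
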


\begin{proof}
We compare two functors on the category of $\C[s]$-modules. Let $W$ be an arbitrary $\C[s]$-module; no flatness, torsion-freeness or finite generation assumption on $W$ is needed.

\emph{Step 1.} By \eqref{eq:NK-Kashiwara} and Lemma \ref{lem:kashiwara}(2), the map $\phi\mapsto\mathrm{id}_\Delta\otimes\phi$ is a natural isomorphism
\[
\Hom_{\C[s]}(\sV,W)\xrightarrow{\ \sim\ }\Hom_{\sD_X[s]}(N^K_f,\Delta\otimes_\C W).
\]

\emph{Step 2.} By \eqref{eq:NK-presentation}, a $\sD_X[s]$-linear map $\varphi\colon N^K_f\to\Delta\otimes_\C W$ is determined by $w=\varphi(1)$, and $w$ ranges exactly over the elements of $\Delta\otimes_\C W$ satisfying
\begin{equation}\label{eq:four-relations}
x^3w=0,\qquad y^3w=0,\qquad P_Ew=0,\qquad P_\delta w=0 .
\end{equation}

\emph{Step 3.} Write $w=\sum_{i,j}u^iv^j\otimes w_{ij}$ with $w_{ij}\in W$. By \eqref{eq:jet-rules}, $x^3w=0$ if and only if $w_{ij}=0$ for $i\geq 3$, and $y^3w=0$ if and only if $w_{ij}=0$ for $j\geq 3$; hence the first two conditions in \eqref{eq:four-relations} say precisely that $w\in J\otimes_\C W=J_{\C[s]}\otimes_{\C[s]}W$. For such $w$, the operators $P_E$ and $P_\delta$ act as $L_E\otimes_{\C[s]}\mathrm{id}_W$ and $L_\delta\otimes_{\C[s]}\mathrm{id}_W$, so the last two conditions say that $(A\otimes_{\C[s]}W)(w)=0$. Combining Steps 1--3 we obtain a natural isomorphism
\begin{equation}\label{eq:Hom-is-ker}
\Hom_{\C[s]}(\sV,W)\simeq\ker\big(A\otimes_{\C[s]}W\big).
\end{equation}
Note that we identify the $\Hom$-set with the kernel of the base-changed map; we do not use the (generally false) identity $\ker(A)\otimes W=\ker(A\otimes W)$.

\emph{Step 4.} Put $F=J_{\C[s]}$ and $G=J_{\C[s]}\oplus(J_{\leq 2})_{\C[s]}$, finite free $\C[s]$-modules, let $A^\vee\colon G^\vee\to F^\vee$ be the dual map, whose matrix in the dual bases is $[A]^t$, and let $C_A=\coker(A^\vee)$. Applying the left exact functor $\Hom_{\C[s]}(-,W)$ to the exact sequence $G^\vee\to F^\vee\to C_A\to 0$ gives an exact sequence $0\to\Hom(C_A,W)\to\Hom(F^\vee,W)\to\Hom(G^\vee,W)$. Since $F$ and $G$ are finite free, evaluation gives canonical isomorphisms $F\otimes_{\C[s]}W\simeq\Hom(F^\vee,W)$, $f\otimes w\mapsto(\eta\mapsto\eta(f)w)$, and $G\otimes_{\C[s]}W\simeq\Hom(G^\vee,W)$, under which the last arrow is $A\otimes_{\C[s]}\mathrm{id}_W$. Hence, naturally in $W$,
\begin{equation}\label{eq:ker-is-Hom}
\ker\big(A\otimes_{\C[s]}W\big)\simeq\Hom_{\C[s]}(C_A,W).
\end{equation}
In matrix terms: a map $\C[s]^9\to W$ is a column $w=(w_1,\dots,w_9)^t$ with entries in $W$, and it factors through $C_A=\coker([A]^t)$ if and only if it kills every column of $[A]^t$, i.e. if and only if $[A]w=0$.

\emph{Step 5.} By \eqref{eq:Hom-is-ker} and \eqref{eq:ker-is-Hom} the functors $\Hom_{\C[s]}(\sV,-)$ and $\Hom_{\C[s]}(C_A,-)$ are naturally isomorphic, so $\sV\simeq C_A=\coker(A^t)$ by Yoneda's lemma. This proves \eqref{eq:V-coker}.

\emph{Step 6.} The rows of $[A]$ are the rows $-\lambda_d\,e_k^t$ of $[L_E]$, one for each basis vector $e_k$ of $J_d$, and the rows of $[L_\delta]$; a row of $[L_\delta]$ indexed by a basis vector of $J_{d-2}$ has nonzero entries only in the columns of $J_d$, where it is the corresponding row of $\wt M_d$, by Lemma \ref{lem:jet-operators}. Hence $C_A=\C[s]^9/(\text{row space of }[A])$ splits as the direct sum over $d$ of $\C[s]^{n_d}/(\lambda_d\C[s]^{n_d}+\im\wt M_d^t)$. Finally, if $\wt M_d-M_d=\lambda_dN_d$, then $\lambda_d\C[s]^{n_d}+\im\wt M_d^t=\lambda_d\C[s]^{n_d}+\im M_d^t$, which proves \eqref{eq:V-blocks}.
\end{proof}

\subsection{The three small matrices}\label{sec:small-matrices}
On degree $d$, the Euler relation $\lambda_d=0$ permits the substitution $s_4=-d-2-s_1-s_2-s_3$ in \eqref{eq:tilde-M}. For $d=0,1$ the operator $L_\delta$ is zero. For $d=2$ we get
\begin{equation}\label{eq:M2}
M_2=\begin{pmatrix}-16(s_1+1)&-4(2s_1-s_2+s_3+2)&8(s_2+1)\end{pmatrix}.
\end{equation}
For $d=3$,
\begin{equation}\label{eq:M3}
M_3=\begin{pmatrix}-8(2s_1-s_2+s_3+1)&8(s_2+2)\\-16(s_1+2)&-8(2s_1-s_2+s_3+4)\end{pmatrix},\qquad\det M_3=64\,\Phi ,
\end{equation}
with $\Phi$ as in \eqref{eq:Phi}. For $d=4$,
\begin{equation}\label{eq:M4}
M_4=\begin{pmatrix}8(s_2+3)\\-16(2s_1-s_2+s_3+3)\\-16(s_1+3)\end{pmatrix}.
\end{equation}

\subsection{Annihilators of the five blocks}\label{sec:five-blocks}
We now apply Proposition \ref{prop:universal-jet}. Since the annihilator of a direct sum is the intersection of the annihilators, $B^K_f=\Ann_{\C[s]}\sV=\bigcap_{d=0}^4\Ann_{\C[s]}\sV_d$.

\emph{Degrees zero and one.} Since $M_0=M_1=0$, we have $\sV_0=\C[s]/(\lambda_0)$, $\sV_1=(\C[s]/(\lambda_1))^2$, and $\Ann\sV_0=(\lambda_0)$, $\Ann\sV_1=(\lambda_1)$.

\emph{Degree two.} Let $R=\C[s]/(\lambda_2)$, a polynomial ring in three variables. Then $\sV_2=R^3/R\cdot M_2^t$ is the quotient of a free module of rank three by the submodule generated by one nonzero column, so $\sV_2\otimes_R\Frac(R)$ has dimension two; an element of $R$ annihilating $\sV_2$ annihilates this vector space and is therefore zero. Hence $\Ann_{\C[s]}\sV_2=(\lambda_2)$.

\emph{Degree three.} Let $R=\C[s]/(\lambda_3)\simeq\C[s_1,s_2,s_3]$. Then $\sV_3=\coker_R(M_3^t)$. The adjugate identity $\operatorname{adj}(M_3^t)M_3^t=\det(M_3)\,I_2$ and \eqref{eq:M3} show that $\Phi$ annihilates $\sV_3$. Conversely, the support of the cokernel of a square matrix over $R$ is exactly the zero locus of its determinant: at a prime not containing $\det M_3$ the matrix is invertible, while at a prime containing it the matrix becomes singular over the residue field and the cokernel is nonzero by Nakayama's lemma. Hence $\sqrt{\Ann_R\sV_3}=\sqrt{(\Phi)}$. The homogeneous quadratic part of $\Phi$ has symmetric matrix
\begin{equation}\label{eq:quadratic-part}
\begin{pmatrix}4&-1&2\\-1&1&-1\\2&-1&1\end{pmatrix},\qquad\text{of determinant }-1 ,
\end{equation}
hence of rank three. A product of two affine linear polynomials has quadratic part of rank at most two, so $\Phi$ is irreducible, and $(\Phi)$ is a prime ideal of $R$. Since $(\Phi)\subseteq\Ann_R\sV_3\subseteq\sqrt{\Ann_R\sV_3}=(\Phi)$, we obtain $\Ann_R\sV_3=(\Phi)$ and
\begin{equation}\label{eq:Ann-V3}
\Ann_{\C[s]}\sV_3=(\lambda_3,\Phi).
\end{equation}

\emph{Degree four.} Here $\sV_4=\C[s]/(\lambda_4,8(s_2+3),-16(2s_1-s_2+s_3+3),-16(s_1+3))$ is cyclic. The three entries of $M_4$ together with $\lambda_4$ give successively $s_2=-3$, $s_1=-3$, $s_3=0$ and $s_4=0$. Therefore
\begin{equation}\label{eq:Ann-V4}
\Ann_{\C[s]}\sV_4=(s_1+3,\,s_2+3,\,s_3,\,s_4).
\end{equation}

\begin{proof}[Proof of Theorem \ref{thm:counter}]
Intersecting the five annihilators computed above gives \eqref{eq:counter-formula}. Each of the five ideals is prime: the first three and the last are generated by linear forms, and $(\lambda_3,\Phi)$ is prime because $\Phi$ is irreducible in $\C[s]/(\lambda_3)\simeq\C[s_1,s_2,s_3]$. They lie on the distinct hyperplanes $\lambda_d=0$, $d=0,1,2,3,4$ (the point $(-3,-3,0,0)$ lies on $\lambda_4=0$), and since $\lambda_d-\lambda_{d'}=d-d'$ is a nonzero constant for $d\neq d'$, they are pairwise comaximal. Hence their intersection equals their product, and it is radical, being an intersection of primes. The subvariety $Z(\lambda_3,\Phi)$ is an irreducible quadric surface in the three-dimensional affine space $(\lambda_3=0)$ which is not a plane, since $\Phi$ is not a product of linear forms; consequently $Z(B^K_f)$, which is the union of the three hyperplanes $(\lambda_d=0)$, $d=0,1,2$, of this quadric surface, and of the point $(-3,-3,0,0)$, is not a finite union of translated linear subvarieties.
\end{proof}

\begin{remark}\label{rem:computer}
Formula \eqref{eq:counter-formula} has also been confirmed by an independent computation: eliminating $x,y,\partial_x,\partial_y$ from the left ideal $\sD_X[s](P_E,P_\delta,x^3,y^3)$ of the Weyl algebra with parameters, using the noncommutative Gr\"obner basis routines of \textsc{Singular} \cite{DGPS}, returns exactly the ideal \eqref{eq:counter-formula}. In the same way one checks for the arrangement \eqref{eq:four-lines} that $B^{-e_1}_f=\big((s_1+1)\lambda_0\lambda_1\big)$, $B^{-(e_1+e_2)}_f=\big((s_1+1)(s_2+1)\lambda_0\lambda_1\lambda_2\big)$, $B^{-2e_1}_f=\big((s_1+1)(s_1+2)\lambda_0\lambda_1\lambda_2\big)$ and $B^{\mon{e_1,e_2}}_f=B^{K_{[4]}}_f=(\lambda_0)$, in accordance with Theorem \ref{thm:main-unit}, Corollary \ref{cor:general-shift}, Theorem \ref{thm:main-coord}.
\end{remark}

\begin{remark}\label{rem:conjectures}
Theorem \ref{thm:counter} shows that neither the algebraic form of Budur's conjecture recalled in \S\ref{sec:intro-monoid} (generation by products of linear polynomials) nor its geometric form, \cite[Conjecture E]{Wu26}, can hold for arbitrary monoid ideals.

The failure of the conjecture is, however, invisible after exponentiation. Let $\Exp\colon\C^4\to(\C^*)^4$, $s\mapsto(e^{2\pi is_1},\dots,e^{2\pi is_4})$. Since $\lambda_d=0$ means $s_1+s_2+s_3+s_4=-2-d\in\Z$, every hyperplane $(\lambda_d=0)$, $d\in\Z$, is mapped onto the subtorus $\{t_1t_2t_3t_4=1\}$; the quadric surface $Z(\lambda_3,\Phi)$ is contained in $(\lambda_3=0)$, and the point $(-3,-3,0,0)$ is mapped to $(1,1,1,1)$. Hence
\[
\Exp\big(Z(B^K_f)\big)=\{t\in(\C^*)^4\mid t_1t_2t_3t_4=1\}
\]
is a single subtorus of codimension one. This is in accordance with \cite[Theorem A]{Wu26}, which identifies $\Exp(Z(B^{K}(\sN_0)))$ with the support of the generalized nearby cycles along $K$ and shows that it is a finite union of translated subtori: the nonlinear component is a feature of the zero locus in $\C^r$ itself, and it cannot be detected by its exponential image. 
\end{remark}

\appendix

\section{The Singular code}\label{sec:appendix}

The \textsc{Singular} \cite{DGPS} code used in Remark \ref{rem:computer}:

{\small
\begin{verbatim}
LIB "nctools.lib";
// The Weyl algebra D_X[s] for X = C^2, with the parameters s1,...,s4.
ring r = 0,(x,y,Dx,Dy,s1,s2,s3,s4),dp;
matrix D[8][8]; D[1,3] = 1; D[2,4] = 1;    // [Dx,x] = 1, [Dy,y] = 1
def A = nc_algebra(1,D); setring A;
option(redSB);

// The arrangement f = (x, y, x+y, x+2y), its Saito basis E, delta,
// and the two logarithmic operators annihilating f^s.
poly f  = x*y*(x+y)*(x+2*y);
poly fx = diff(f,x);  poly fy = diff(f,y);
poly q1 = x^2+6*x*y+6*y^2;   poly q2 = -3*x^2-6*x*y-2*y^2;
poly q3 = x^2+2*x*y-2*y^2;   poly q4 = x^2-2*x*y-2*y^2;
poly PE = x*Dx + y*Dy - (s1+s2+s3+s4);
poly Pd = fy*Dx - fx*Dy - (s1*q1+s2*q2+s3*q3+s4*q4);

// Theorem 6.1: the ideal B_f^K for the monoid ideal K = <3e_1, 3e_2>.
ideal J = eliminate(ideal(PE,Pd,x^3,y^3), x*y*Dx*Dy);

// Comparison with the right-hand side of (6.4).
ring S = 0,(s1,s2,s3,s4),dp;
poly lam = s1+s2+s3+s4+2;
poly Phi = (2*s1-s2+s3+1)*(2*s1-s2+s3+4)+2*(s1+2)*(s2+2);
ideal Bc = intersect(ideal(lam), ideal(lam+1), ideal(lam+2),
                     ideal(lam+3,Phi), ideal(s1+3,s2+3,s3,s4));
Bc = std(Bc);   ideal Jc = std(imap(A,J));
size(reduce(Bc,Jc));   size(reduce(Jc,Bc));  // both 0: the ideals agree

// Theorem 1.1, Corollary 1.2 and Theorem 1.4 for the same arrangement.
setring A;
eliminate(ideal(PE,Pd,x),   x*y*Dx*Dy);   // B_f^{-e_1}
eliminate(ideal(PE,Pd,x*y), x*y*Dx*Dy);   // B_f^{-(e_1+e_2)}
eliminate(ideal(PE,Pd,x^2), x*y*Dx*Dy);   // B_f^{-2e_1}
eliminate(ideal(PE,Pd,x,y), x*y*Dx*Dy);   // B_f^{K_[4]}
\end{verbatim}
}

\end{document}